\newcommand{\N}{\mathbb N}
\newcommand{\Z}{\mathbb Z}
\newcommand{\R}{\mathbb R}
\def\P{\mathbb P}
\def\rg{\rangle}
\def\lg{\langle}
\def\Pw{{\mathcal P}(\T^d)}
\def\dw{{\bf d}_2}
\newcommand{\be}{\begin{equation}}
\newcommand{\ee}{\end{equation}}
\def\1{{\bf 1}}
\def\ep{\epsilon}
\def\Dt0{{\bf D}(t_0)}
\def\E{{\bf E}}
\def\P{{\bf P}}
\def\to{\rightarrow}
\def\dw{{\bf d}_2}
\def\rg{\rangle}
\def\lg{\langle} 
\def \to{\rightarrow}
\def \T{\mathbb{T}}
\def \R {\mathbb{R}}
\def \N {\mathbb{N}}
\def \Z {\mathbb{Z}}
\def\dive{{\rm div}}
\def \ep{\varepsilon}
\def\E{\mathbb E}
\def\P{\mathbb P}
\def\inte{\int_{\T^d}}
\definecolor{ProcessBlue}{cmyk}{1,0,0,0.40}
\def\mPT{{\mathcal P}(\T^d)}
\newtheorem{Theorem}{Theorem}[section]
\newtheorem{Definition}[Theorem]{Definition}
\newtheorem{Proposition}[Theorem]{Proposition}
\newtheorem{Lemma}[Theorem]{Lemma}
\newtheorem{Remark}{Remark}[section]
\begin{document}
\title{Weak KAM theory for potential MFG}
\author{Pierre Cardaliaguet, Marco Masoero \thanks{PSL Research University, Universit\'e Paris-Dauphine, CEREMADE, Place de Lattre de Tassigny, F- 75016 Paris, France}}

\maketitle

%\tableofcontents

\begin{abstract}
We develop the counterpart of weak KAM theory for potential mean field games. This allows to describe the long time behavior of time-dependent potential mean field game systems. Our main result is the existence of a limit, as time tends to infinity, of the value function of an optimal control problem stated in the space of measures. In addition, we show a mean field limit for the ergodic constant associated with the corresponding Hamilton-Jacobi equation.
\end{abstract}

\section*{Introduction}
The theory of mean field games (MFG), introduced simultaneously and independently by Lasry and Lions \cite{lasry2006jeux,lasry2006jeux2} and Huang, Caines and  Malham{\'e} \cite{huang2006large}, is devoted to the analysis of models where a large number of players interact strategically with each other. Under suitable assumptions, the Nash equilibria of those games can be analyzed through the solutions of the, so-called, MFG system
\begin{equation}\label{basicMFG}
\begin{cases} 
-\partial_t u-\Delta u+H(x,Du)=F(x,m) & \mbox{in } \T^d\times[0,T]\\
-\partial_t m+\Delta m+{\rm div}(mD_pH(x,Du))=0 & \mbox{in }\T^d\times[0,T]\\
m(0)=m_0,\; u(T,x)=G(x,m(T))&\mbox{in }\T^{d}.
\end{cases}
\end{equation} 
The first unknown $u(t,x)$ is the value function of an infinitesimal player starting from $x$ at time $t$ while the second one, $m(t)$, describes the distribution of players at time $t$. The maps $F,G:\Pw\to \R$ (where $\Pw$ is the set of Borel probability measures on the torus $\T^d$) describe the interactions between players.
 
  In this paper we investigate the limit behavior, as the horizon $T$ tends to infinity, of this system. This is a very natural question, especially when one looks at those models as dynamical systems. 
  
   One natural guess is that the system simplifies in large times and converges to a time independent model, called the ergodic MFG system:
 \begin{equation}\label{eq.ergosyst}
\begin{cases}
- \bar\lambda-\Delta u+H(x,Du)=F(x,m)&\mbox{in } \T^d,\\
\Delta m+{\rm div}(mD_pH(x,Du))=0 & \mbox{in }\T^d.
\end{cases}
\end{equation}

There is a relatively wide evidence of this phenomenon, starting from \cite{lionsmean} and the Mexican wave model in \cite{gueant2011mean} to more recent contributions in \cite{cardaliaguet2013long,cardaliaguet2013long2,cardaliaguet2012long,gomes2010discrete}. 
  
  All these papers, however, rely on a structure property, the so-called monotonicity assumption, which is seldom met in practice. More recently, the problem of understanding what happens in the non-monotone setting has been addressed in several papers. Gomes and Sedjro \cite{gomes2018onedimensional} found the first example of periodic solutions in the context of one-dimensional first order system with congestion. Cirant in \cite{cirant2019existence} and Cirant and Nurbekyan in \cite{cirant2018variational} forecast and then proved the existence of periodic solutions for a specific class of second order MFG systems (with quadratic Hamiltonian). These periodic trajectories were built through a bifurcation method in a neighborhood of a simple solution. Note that these examples show that the ergodic MFG system is not always the limit of the time-dependent ones. In  \cite{masoero2019} the second authors gave additional evidence of this phenomena using ideas from {\it weak KAM theory} \cite{fathi1997solutions,fathi1997theoreme,fathi1998convergence}. The main interest of the approach is that it allows to study the question for a large class of MFG systems,  {\it potential MFG systems}.

We say that a MFG system like \eqref{basicMFG} is of potential type if it can be derived as optimality condition of the following optimal control problem on the Fokker-Plank equation
\begin{equation}\label{UTUT}
\mathcal U^T(t,m_{0})= \inf_{(m,\alpha)}\int_0^T\int_{\T^d}H^*\left(x,\alpha(s,x)\right)dm(s)+\mathcal F(m(s))dt+\mathcal G(m(T)),
\end{equation}
where $(m,\alpha)$ verifies the Fokker Plank equation $-\partial_{t}m+\Delta m+{\rm div}(\alpha m)=0$ with $m(t)=m_{0}$ and $\mathcal F$ and $\mathcal G$ are respectively the potentials of the functions  $F$ and $G$ that appear in \eqref{basicMFG}. Since the very beginning, this class of models has drawn a lot of attention. \cite{lasry2006jeux2} first explained the mechanism behind the minimizing problem \eqref{UTUT} and the MFG system \eqref{basicMFG} and, since then, the literature on potential MFG thrived. See for instance \cite{briani2018stable,cardaliaguet2015second,ferreira2014convergence, meszaros2018variational} for the use of theses techniques to build solutions and  analyse their long-time behavior under a monotonicity assumption. 

%In \cite{cardaliaguet2013long2}, Cardaliaguet, Lasry, Lions and Porretta proved that, under monotonicity assumption, second order MFG converge to the ergodic system
%$$
%\begin{cases}
%- \bar\lambda-\Delta u+H(x,Du)=F(x,m)&\mbox{in } \T^d\\
%\Delta m+{\rm div}(mD_pH(x,Du))=0 & \mbox{in }\T^d.
%\end{cases}
%$$
In the present paper, we investigate the behavior, as $T\to+\infty$, of the solutions to the mean field games system \eqref{basicMFG} which are minimizers of \eqref{UTUT}. It is a  continuation of \cite{masoero2019}, which started the analysis of the convergence of the time-dependent, non-monotone, potential MFG systems through weak KAM techniques. We believe that these techniques lead to a more fundamental understanding of long time behavior for potential MFG. When the powerful tools of the weak KAM theory can be deployed, one can look at this problem in a more systematic way. Unlike the PDEs techniques that were so far used, this approach does not depend on the monotonicity of the system. A key point is that the weak KAM theory, exploiting the Hamiltonian structure of potential MFG, gives us a clear understanding of the limit object that the trajectories minimize when the time goes to infinity. We draw fully from both Fathi's seminal papers \cite{fathi1997solutions,fathi1997theoreme,fathi1998convergence} and his book \cite{fathi2008weak}. Several objects defined along the paper and the very structure of many proofs will sound familiar for who is acquainted with weak KAM theory. Nonetheless, it is not always straightforward to transpose these techniques into the framework of MFG and it often requires more effort than in the standard case. It is worthwhile to mention that infinite dimensional weak KAM theorems are not new, especially in the context of Wasserstein spaces: see for instance \cite{gamgbo2010lagrangian,gamgbo2014weak,gomes2015minimizers,gomes2016infinite}. These papers do not address the MFG problem but they surely share the same inspiration. 

Let us now present our main results and discuss the strategy of proofs. As we have anticipated, the starting point of this paper are some results proved in \cite{masoero2019}. The first one is the existence of the ergodic constant $\lambda$, such that 
$$
\frac{\mathcal U^T(0,\cdot)}{T}\longrightarrow -\lambda,
$$
where $\mathcal U^T$ is defined in \eqref{UTUT}. 
The second one is the existence of corrector functions. We say that a continuous function $\chi$ on $\mathcal P(\T^d)$ is a corrector function if it verifies the following dynamic programming principle 
\begin{equation}\label{eq.defchichi}
\chi(m_{0})= \inf_{(m,\alpha)}\left( \int_{0}^{t}H^{*}(x,\alpha)dm(s)+\mathcal F(m(s))ds+\chi (m(t))\right)+\lambda t
\end{equation}
where $(m,\alpha)$ solves in the sense of distributions $-\partial_{t}m+\Delta m+{\rm div}(m\alpha)=0$ with initial condition $m(0)=m_{0}$. At the heuristic level, this amounts to say that $\chi$ solves the ergodic problem
\begin{equation}\label{eq.ergobpPw}
\int_{\T^d} ( H(y, D_m\chi(m,y))-\dive_y D_m\chi(m,y)) m(dy) = \mathcal F(m)+ \lambda\qquad {\rm in}\; \Pw.
\end{equation}
(the notion of derivative $D_m\chi$ is described in Section \ref{sec.assum.res} below).

The main results of this paper are Theorem \ref{teo.xiconv} and Theorem \ref{teo.malphaconv}. The first one states that $\mathcal U^T(0,\cdot)+\lambda T$ uniformly converges to a corrector function while the second one ensures that this convergence does not hold only at the level of minimization problems but also when it comes to optimal trajectories. In particular, Theorem \ref{teo.malphaconv} says that optimal trajectories for $\mathcal U^T(0,\cdot)+\lambda T$ converge to calibrated curves (i.e., roughly speaking, to global minimizers of \eqref{eq.defchichi}). Let us recall that, in \cite{masoero2019}, the second author provides examples in which the calibrated curves stay away from the solutions of the MFG ergodic system \eqref{eq.ergosyst}. In that framework, our result implies that no solution to the MFG system \eqref{basicMFG} obtained as minimizers of \eqref{UTUT} converges to a solution of the MFG ergodic system.

The convergence of $\mathcal U^T(0,\cdot)+\lambda T$ to a corrector is of course the transposition, in our setting, of Fathi's famous convergence result for Hamilton-Jacobi equations \cite{fathi1998convergence}. The basic strategy of proof is roughly the same. Here, the additional difficulty lies in the fact that, in our infinitely dimensional framework, the Hamiltonian in \eqref{eq.ergobpPw} is neither first order nor  ``uniformly elliptic'' (cf. the term in divergence in \eqref{eq.ergobpPw}). 

We overcome this difficulty by introducing two main ideas that we now describe. As in \cite{fathi1998convergence}, we start with further characterizations of the limit value $\lambda$. Let us set
\begin{equation}\label{HJintro}
I:= \inf_{\Phi\in C^{1,1}({\mathcal P}(\T^d))}\sup_{m\in {\mathcal P}(\T^d)} \int_{\T^d} (H(y, D_m\Phi(m,y)) -\mathcal F(m)-\dive_y D_m\Phi(m,y)) m(dy).
\end{equation}
Then, by duality techniques, one can check the following equality (Proposition \ref{prop.dual})
\begin{equation}\label{dualI}
-I = \min_{(\mu, p_1)}\int_{{\mathcal P}(\T^d)} \int_{\T^d}  \left(H^*\Bigl(y, \frac{dp_1}{d m\otimes \mu}\Bigr)+\mathcal F(m)\right) m(dy)\mu(dm), 
\end{equation}
where $(\mu, p_1)$ are closed measures, in the sense that, for any $\Phi\in C^{1,1}(\mathcal P(\T^d))$, 
\begin{equation}
-\int_{{\mathcal P}(\T^d)\times \T^d} D_m\Phi(m,y) \cdot p_1(dm,dy) +\int_{{\mathcal P}(\T^d)\times \T^d} \dive_y D_m\Phi(m,y) m(dy)\mu(dm) =0.
\end{equation}
 We call Mather measure any couple $(\mu,p_1)$ which minimizes the dual problem (on this terminology, see Remark \ref{rem.Mather}).

One key step is to show that $I=\lambda$. While it is easy to prove that $I\geq\lambda$ (Proposition \ref{prop.lambdaI}), the opposite inequality is trickier. One has to construct a smooth subsolution of the ergodic problem \eqref{eq.ergobpPw} in a context where there is no ``classical'' convolution.  The idea is to look at a finite particle system on $(\T^d)^N$. A similar idea was used in \cite{gamgbo2014weak} for first order problems on the $L^2(0,1)-$torus. The main difference with  \cite{gamgbo2014weak} is that,
for first order problems, the particle system is embedded into the continuous one, which is not the case for problems with diffusion. The argument of proof is therefore completely different. We set $(v^N,\lambda^N)\in C^2((\T^d)^N)\times\R$ solution of
$$
-\sum_{i=1}^N \Delta_{x_i} v^N({\bf x}) + \frac{1}{N} \sum_{i=1}^N H(x_i, N D_{x_i}v^N({\bf x}))=\mathcal F( m^N_{\bf x})+ \lambda^N.
$$ 
Note that, in contrast with \cite{gamgbo2014weak}, the constant $\lambda^N$, here, depends on $N$. Our first main idea is to  introduce the smooth function on $\mathcal P(\T^d)$
$$
W^{N}(m):= \int_{(\T^d)^N} v^N(x_1, \dots, x_N)\prod_{i=1}^N m(dx_i)
$$
and to show that it satisfies in $\Pw$
 $$
  -\int_{\T^d} \dive_y D_m W^N(m, y) m(dy) + \int_{\T^d}  H(y, D_mW^N(m,y), m)m(dy)\leq\mathcal F(m) + \lambda^N+o(N),
 $$
which implies that $I\leq \liminf_N \lambda^N$. To conclude that $I=\lambda$ we proved that $\lambda^N\rightarrow\lambda$. The proof of this result is organized in two steps. The first one (Lemma \ref{lemma.bernstein.vN}) is inspired by \cite{lions2005homogenization} and consists in deriving, through Berstein's method, estimates on $v^N$ of the form 
$$
N\sum_{i=1}^N\vert D_{x_i}v^N({\bf x})\vert^2\leq \bar C_1\qquad\forall {\bf x}\in(\T^d)^N
$$
and to prove that $v^N$ uniformly converges to a Lipschitz function $V$ on $\mathcal P(\T^d)$. In the second one (Proposition \ref{prop.lacker}) we adapt the argument of \cite{lacker2017limit}, which is concerned with the connection between optimal control of McKean-Vlasov
dynamics and the limit behavior of large number of interacting controlled state processes, to show that if $\lambda^*$ is an accumulation point of $\lambda^N$ then
\begin{align}\label{}
V(m_{0})= \inf_{(m,\alpha)}\left( \int_{0}^{T}\int_{\T^d}H^{*}(x,\alpha)dm(s)+\mathcal F(m(s))ds+V (m(T))\right)+\lambda^* T,
\end{align}
where $(m,\alpha)$ solves in the sense of distributions $-\partial_{t}m+\Delta m+{\rm div}(m\alpha)=0$ with initial condition $m_{0}$. Consequently $\lambda^*=\lambda$ and so $\lambda^N\rightarrow\lambda$.

The next difficulty is that the Hamiltonian appearing in \eqref{eq.ergobpPw} is  singular (because of the divergence term). This prevents us to say, as in the classical setting, that Mather measures are supported by graphs on which the Hamilton-Jacobi is somehow satisfied.  To overcome this issue, we introduce our second main idea, the notion of ``smooth'' Mather measures (measures supported by ``smooth" probability measures). We prove that limits of minimizers for $\mathcal U^T$ provide indeed ``smooth'' Mather measures and that,  if $(\mu, p_1)$ is a ``smooth" Mather measure then, if we set 
$$
q_1(x,m):= D_a H^*\left(y,  \frac{dp_1}{d m\otimes \mu}(y,m)\right), 
$$
we have, for $\mu-$a.e. $m\in \mPT$, 
\begin{equation}\label{eq.q1mu}
\int_{\T^d} q_1(y,m) \cdot Dm(y)dy +   \int_{ \T^d}  H(y, q_1(y,m))m(dy)=\mathcal F(m) +\lambda.
\end{equation}
 (see Proposition \ref{eq.lem.mup1}).  Note that \eqref{eq.q1mu} is a kind of reformulation of the ergodic equation \eqref{eq.ergobpPw}, in which $q_1=D_m\chi$ and where the divergence term is integrated by parts.  The rest of the proof is more standard and does not bring new difficulties compared to \cite{fathi1998convergence}. \\

Let us briefly describe the organization of the paper. 
In Section \ref{sec.assum.res}, we fix the main notation and assumption and collect the results of \cite{masoero2019} that we sketched above.
Section \ref{sec.dual.prob} and \ref{sec.Nprob} focus on  further characterizations of the limit value $\lambda$. In particular, in Section \ref{sec.dual.prob}, we prove that \eqref{dualI} and $I\geq \lambda$ hold, while Section \ref{sec.Nprob} is devoted to the analysis of the particle system and the proof that $I=\lambda$.  Section \ref{sec.Mather} gives a closer look to Mather measures and explains \eqref{eq.q1mu}. Section \ref{sec.finconv} contains Theorem \ref{teo.xiconv} and Theorem \ref{teo.malphaconv} and their proofs.  \\

{\bf Acknowledgement:} The first author was partially supported by the ANR (Agence Nationale de la Recherche) project
ANR-16-CE40-0015-01. We would like to thank Marco Cirant who carefully read this manuscript and pointed to a blunder in the previous version of Section \ref{sec.Nprob}.

\section{Assumptions and preliminary results}\label{sec.assum.res}
The aim of this preliminary section is twofold. Firstly, we introduce the notation and the assumptions that we will use throughout the paper. Then, we collect some results from \cite{masoero2019} which are the starting point of this work.
\subsection{Notation and assumptions}
We work on the $d-$dimensional flat torus $\T^{d}=\R^{d}/\Z^{d}$ to avoid boundary conditions and to set the problem on a compact domain. 
We denote by $\mathcal P(\T^{d})$ the set of Borel probability measures on $\T^{d}$. This is a compact, complete and separable set when endowed with the $1$-Wasserstein distance $\mathbf d(\cdot,\cdot)$. Let $m$ be a Borel measure over $[t,T]\times\T^d$, with first marginal the Lebesgue measure $ds$ over $[t,T]$, then with $\{m(s)\}_{s\in[t,T]}$ we denote the disintegration of $m$ with respect to $dt$. We will always consider measures $m$ such that $m(s)$ is a probability measure on $\T^d$ for any $s\in[t,T]$.

If $m$ is such a measure, then $L^{2}_{m}([t,T]\times\T^{d})$ is the set of $m$-measurable functions $f$ such that the integral of $|f|^{2}dm(s)$ over $[t,T]\times\T^{d}$ is finite.

We use throughout the paper the notion of derivative for functions defined on $\mathcal P(\T^{d})$ introduced in \cite{cardaliaguet2015master}. We say that $\Phi:\mathcal P(\T^{d})\rightarrow \R$ is $C^{1}$ if there exists a continuous function $\frac{\delta \Phi}{\delta m}:\mathcal P(\T^{d})\times\T^{d}\rightarrow\R$ such that
$$
\Phi(m_{1})-\Phi(m_{2})=\int_{0}^{1}\int_{\T^{d}} \frac{\delta \Phi}{\delta m}((1-t)m_{2}+tm_{1},x)(m_{1}-m_{2})(dx)dt,\qquad\forall m_{1},m_{2}\in\mathcal P(\T^{d}).
$$
As this derivative is defined up to an additive constant, we use the standard normalization
\begin{equation}\label{deriv.conve}
\int_{\T^{d}} \frac{\delta \Phi}{\delta m}(m,x)m(dx)=0.
\end{equation}
We recall that, if $\mu$, $\nu\in\mathcal P(\T^d)$, the $1$-Wasserstein distance is defined by
\begin{equation}\label{RKd2}
\mathbf d(\mu,\nu)= \sup \left\{ \left. \int_{\T^{d}} \phi(x) \,d (\mu - \nu) (x) \right| \mbox{continuous } \phi : \T^{d} \to \mathbb{R},\, \mathrm{Lip} (\phi) \leq 1 \right\}.
\end{equation}
\textbf{Assumptions:} Throughout the paper the following conditions will be in place.
\begin{enumerate}
	\item $H:\T^d\times\R^d\rightarrow\R$ is of class $C^2$, $p\mapsto D_{pp}H(x,p)$ is Lipschitz continuous, uniformly with respect to $x$. Moreover, there exists $\bar C>0$ that verifies 
	\begin{equation}\label{1}
	\bar C^{-1}I_d\leq D_{pp}H(x,p)\leq \bar CI_d, \quad\forall (x,p)\in\T^d\times\R^d
	\end{equation}
	and $\theta\in(0,1)$, $C>0$ such that the following conditions hold true:
	\begin{align}\label{DxH.bounds}
	\vert D_xH(x,p)\vert \leq C+C\vert p\vert\quad\forall (x,p)\in\T^d\times\R^d
	\end{align}
	and
	\begin{equation}\label{2}
	|D_{xx}H(x,p)|\leq C(1+|p|)^{1+\theta},\quad|D_{x,p}H(x,p)|\leq C(1+|p|)^\theta,\quad\forall (x,p)\in\T^d\times\R^d.
	\end{equation}

	\item $\mathcal F:\mathcal P(\T^{d})\rightarrow\R$ is of class $C^2$. Its derivative $F:\T^d\times\mathcal P(\T^d)\rightarrow\R$ is twice differentiable in $x$ and $D^2_{xx}F$ is bounded.
	Examples of non monotone coupling functions which verify such conditions can be found in \cite{masoero2019}. 
	
\end{enumerate}
Note that some of the above assumptions will not be used explicitly in this paper but have been used in \cite{masoero2019} to prove results that we will assume to hold true. (The only differences are assumption \eqref{DxH.bounds} and that here we need $\mathcal F$ to be of class $C^2$ while in \cite{masoero2019} $\mathcal F$ was required only to be $C^1$).

Very often in the text, we do not need to work explicitly on $\mathcal F$, so, in order to have a lighter notation, we incorporate $\mathcal F$ in the Hamiltonian defining, for any $(x,p,m)\in\T^d\times\R^d\times\mathcal P(\T^d)$,
\begin{equation}\label{def.calH}
\mathcal H(x,p,m):=H(x,p)-\mathcal F(m).
\end{equation}
We denote with $\mathcal H^*$ the Fenchel conjugate of $\mathcal H$ with respect to the second variable. Then, for any $(x,a,m)\in\T^d\times\R^d\times\mathcal P(\T^d)$,
$$
\mathcal H^*(x,a,m)=H^*(x,a)+\mathcal F(m).
$$
We can now introduce the standard minimization problem in potential MFG:
\begin{equation}\label{VlI}
\mathcal U^T(t,m_0)=\inf_{(m,\alpha)}\int_t^T \int_{\T^d}H^*(x,\alpha)dm(s)+\mathcal F(m(s))dt,\quad m_0\in\mathcal P(\T^d)
\end{equation}
where $m\in C^0([t,T],\mathcal P(\T^d))$, $\alpha\in L^2_{m}([t,T]\times\T^d,\R^d)$ and the following equation is verified in the sense of distributions
\begin{equation}\label{fp}
\begin{cases}
-\partial_t m +\Delta m+{\rm div}(m\alpha)=0&\mbox{in }[t,T]\times\T^{d}\\
m(t)=m_0&\mbox{in }\T^{d}.
\end{cases}
\end{equation}

\subsection{Corrector functions and the limit value $\lambda$} 

Here we collect the results already proved in \cite{masoero2019} that we will use. A most important one is the following.
\begin{Theorem}The function \label{uconv}$\frac{1}{T}\mathcal U^T(t,\cdot)$ uniformly converges to a limit value $-\lambda$ when $T$ goes to $+\infty$.
\end{Theorem}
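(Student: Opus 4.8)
\emph{Proof proposal.} Since the running cost in \eqref{VlI} is autonomous, a shift of the time origin gives $\mathcal U^T(t,m_0)=\mathcal U^{T-t}(0,m_0)$, so writing $\mathcal V^S(m_0):=\mathcal U^S(0,m_0)$ it suffices to prove that $\mathcal V^S(\cdot)/S$ converges uniformly on $\mathcal P(\T^d)$ as $S\to+\infty$; then $\tfrac1T\mathcal U^T(t,m_0)=\tfrac{S}{S+t}\,\tfrac{\mathcal V^S(m_0)}{S}$ with $S=T-t$ has the same limit, uniformly in $m_0$. The plan is the classical ergodic–control scheme: near–subadditivity in $S$ together with an oscillation bound that is uniform in $S$, upgraded to uniform convergence by Fekete's lemma. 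I will use freely two standard facts about \eqref{VlI}: the infimum is attained (direct method after the convexifying substitution $w=m\alpha$), and the dynamic programming principle holds,
\begin{equation*}
\mathcal V^S(m_0)=\inf_{(m,\alpha)}\Big\{\int_0^r\!\!\int_{\T^d}H^*(x,\alpha)\,dm(s)+\mathcal F(m(s))\,ds+\mathcal V^{S-r}(m(r))\Big\},\qquad 0<r<S,
\end{equation*}
the infimum over solutions of \eqref{fp} on $[0,r]$. Combined with $\|\mathcal F\|_\infty<\infty$ and $\sup_x|H^*(x,0)|<\infty$ these already yield the crude bounds $-CS\le\mathcal V^S(m_0)\le CS$ and $|\mathcal V^{S+\tau}(m_0)-\mathcal V^S(m_0)|\le C\tau$ for $\tau\in[0,1]$.

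\textbf{Step 1 (controllability/oscillation).} The heart of the matter is to show that there are $\tau_0>0$ and $C_0$, independent of $S$, with $\mathrm{osc}_{\,\mathcal P(\T^d)}\mathcal V^S\le C_0$ for all $S\ge1$. I would proceed as follows: (i) running the heat flow $\partial_s m=\Delta m$ on $[0,\tau_0]$ steers an arbitrary $m_0$, at cost $\le\tau_0(\|\mathcal F\|_\infty+\sup_x|H^*(x,0)|)$, into a fixed compact family $\mathcal K\subset\mathcal P(\T^d)$ of densities bounded above and below by universal constants; (ii) for $\mu_0,\mu_1\in\mathcal K$ one can steer $\mu_0$ to $\mu_1$ in time $\tau_0$ at cost $\le C$, by following a smooth interpolating path of densities with bounded velocity and bounded Fisher information and reading off $\alpha$ from the continuity equation — the at–most–quadratic growth of $H^*$ implied by \eqref{1} then gives a universal bound on the cost; (iii) by parabolic regularity of the optimality system \eqref{basicMFG}, an optimal $m$ for $\mathcal V^S(m_0)$ satisfies $m(\tau_0)\in\mathcal K$. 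Using the dynamic programming principle at $r=\tau_0$ together with (i)--(iii) gives, for all $m_0,m_1$, both $\mathcal V^S(m_0)\le C+\mathcal V^{S-\tau_0}(\mu)$ with $\mu\in\mathcal K$, and, running the optimal trajectory for $\mathcal V^S(m_1)$, $\mathcal V^S(m_1)\ge -C+\mathcal V^{S-\tau_0}(\mu')$ with $\mu'\in\mathcal K$; feeding in the oscillation bound within $\mathcal K$ (from (i)--(ii)) and the crude one–step bounds yields the claimed $S$–uniform oscillation.

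\textbf{Step 2 (subadditivity and Fekete).} Fix a reference $\bar m\in\mathcal K$ and set $b_S:=\mathcal V^S(\bar m)$. Concatenating a near–optimal trajectory for $\mathcal V^{S_1}(\bar m)$ on $[0,S_1]$, the controller of Step 1 returning $m(S_1)$ to $\bar m$ on $[S_1,S_1+\tau_0]$, and a near–optimal trajectory for $\mathcal V^{S_2}(\bar m)$, the dynamic programming principle gives $b_{S_1+S_2+\tau_0}\le b_{S_1}+b_{S_2}+C_0$; with the one–step bounds this makes $S\mapsto b_S+C_1$ subadditive for a suitable $C_1$, so Fekete's lemma provides a finite limit $b_S/S\to\inf_S(b_S+C_1)/S=:-\lambda$.

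\textbf{Step 3 (conclusion).} By Step 1, $|\mathcal V^S(m_0)/S-b_S/S|\le C_0/S$ for every $m_0$, so the pointwise convergence $b_S/S\to-\lambda$ of Step 2 upgrades to $\mathcal V^S(\cdot)/S\to-\lambda$ uniformly on $\mathcal P(\T^d)$, and passing back to $\mathcal U^T$ finishes the proof. I expect the one genuinely delicate point to be Step 1, namely producing a controllability/oscillation estimate that is uniform over \emph{all} probability measures: this is exactly where the structure of the problem enters — the regularizing effect of the Laplacian in \eqref{fp} (to land in a fixed class of smooth, nondegenerate densities), the at–most–quadratic growth of $H^*$ coming from \eqref{1}, and parabolic regularity for \eqref{basicMFG}; once this is in hand, the remainder is the standard sub-additive argument.
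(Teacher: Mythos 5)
The paper does not actually prove this theorem; Section 1.2 explicitly states that it is a result collected from \cite{masoero2019}, so there is no in-text proof to compare against. Your proposal follows the standard ergodic-control scheme---an $S$-uniform oscillation bound obtained by a controllability argument, combined with near-subadditivity of $S\mapsto \mathcal V^S(\bar m)$ and Fekete's lemma---and this is indeed the route taken in the cited reference, so the strategy is essentially the same and the proof is sound. Two small comments on points where a referee would want precision. First, the constraint in \eqref{fp} is the Fokker--Planck equation $-\partial_t m+\Delta m+{\rm div}(m\alpha)=0$, not a pure continuity equation; in your Step 1(ii) the control should therefore be obtained from $ {\rm div}(m_t\alpha_t)=\partial_t m_t-\Delta m_t$ along the interpolating path, which is harmless once $\mathcal K$ is a family of densities bounded above, below, and uniformly in $C^2$ --- exactly what the heat-flow regularization in (i) and the parabolic bounds of Lemma~\ref{2est} (i.e.\ \cite[Lemma 1.3]{masoero2019}) supply. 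Second, your crude one-step bound $|\mathcal V^{S+\tau}(m_0)-\mathcal V^S(m_0)|\le C\tau$ is correct, but the clean way to derive it from the dynamic programming principle is to apply the DPP at $r=S$ (so that one compares with $\mathcal V^\tau(m(S))$ and uses $|\mathcal V^\tau|\le C\tau$), rather than at $r=\tau$; at $r=\tau$ the argument would need the very oscillation estimate one is trying to build. With these glosses, Steps 1--3 fit together correctly.
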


The second result that we will use is the existence of corrector functions. 
\begin{Definition}We say that $\chi:\mathcal{P}(\T^d)\rightarrow\R$ is a corrector function if, for any $m_{0}\in\mathcal P(\T^{d})$ and any $t>0$,
	\begin{equation}\label{dynchi}
	\chi(m_{0})= \inf_{(m,\alpha)}\left( \int_{0}^{t}H^{*}(x,\alpha)dm(s)+\mathcal F(m(s))ds+\chi (m(t))\right)+\lambda t,
	\end{equation}
	where $m\in C^0([0,t],\mathcal P(\T^d))$, $\alpha\in L^2_{m}([0,t]\times\T^d,\R^d)$ and the pair $(m,\alpha)$ solves in the sense of distributions $-\partial_{t}m+\Delta m+{\rm div}(m\alpha)=0$ with initial condition $m_{0}$.
\end{Definition}
 \begin{Proposition}\label{remlip} 
	The set of corrector functions is not empty and uniformly Lipschitz continuous. In addition, if $\chi$ is a continuous map on $\mathcal{P}(\T^d)$ for which equality \eqref{dynchi} holds for some constant $\lambda'$ and for any $t>0$, then $\lambda'=\lambda$.
\end{Proposition}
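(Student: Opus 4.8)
The statement has three parts: non-emptiness of the set of correctors, uniform Lipschitz continuity, and uniqueness of the ergodic constant $\lambda$ appearing in \eqref{dynchi}. My plan is to obtain all three from the convergence result in Theorem \ref{uconv} together with standard dynamic programming / semigroup arguments. For non-emptiness, the natural candidate is a limit of the normalized value functions: set $\chi^T(m_0):=\mathcal U^T(0,m_0)+\lambda T$. Using the dynamic programming principle for $\mathcal U^T$ one has, for $0<t<T$,
\begin{equation}
\mathcal U^T(0,m_0)=\inf_{(m,\alpha)}\Big(\int_0^t\int_{\T^d}H^*(x,\alpha)\,dm(s)+\mathcal F(m(s))\,ds+\mathcal U^T(t,m(t))\Big),
\end{equation}
and since $\mathcal U^T(t,\cdot)=\mathcal U^{T-t}(0,\cdot)$ by time-homogeneity, adding $\lambda T$ gives the prelimit version of \eqref{dynchi} for $\chi^T$ with an error that disappears as $T\to\infty$. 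So the first task is to show the family $\{\chi^T\}$ is precompact in $C^0(\mathcal P(\T^d))$: by Ascoli–Arzelà this needs an equi-Lipschitz bound and an equi-boundedness bound. The equi-Lipschitz bound I would get directly from the equi-Lipschitz estimates on $\mathcal U^T(0,\cdot)$ already available in \cite{masoero2019} (these underlie Theorem \ref{uconv}); equi-boundedness follows because $\mathcal U^T(0,m_0)+\lambda T$ differs from $\mathcal U^T(0,m_1)+\lambda T$ by at most a Lipschitz constant times $\mathbf d(m_0,m_1)\le \mathrm{diam}(\mathcal P(\T^d))$, so it suffices to control the value at one fixed measure, e.g.\ along the stationary-type competitor. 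Then extract a uniformly convergent subsequence $\chi^{T_n}\to\chi$ and pass to the limit in the prelimit dynamic programming inequality, using lower semicontinuity of the cost functional in $(m,\alpha)$ for one inequality and plugging near-optimal controls for the reverse one, exactly as in the classical vanishing-discount / Lax–Oleinik argument of \cite{fathi1998convergence}. This yields a corrector $\chi$, hence the set is non-empty.

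For the uniform Lipschitz continuity of \emph{all} correctors, I would not rely on the construction above but argue intrinsically from \eqref{dynchi}: given any corrector $\chi$ and $m_0,m_1\in\mathcal P(\T^d)$, build a controlled curve starting at $m_1$ that reaches a neighborhood of $m_0$ (or directly $m_0$) in a short time $t$ with controlled cost — for instance interpolating and using the parabolic regularization of the Fokker–Planck equation to bound $\int_0^t\int H^*(x,\alpha)\,dm+\mathcal F(m)\,ds$ by $C\,\mathbf d(m_0,m_1)/t + Ct$ or a similar expression; optimizing in $t$ gives $\chi(m_1)\le\chi(m_0)+C\sqrt{\mathbf d(m_0,m_1)}$ at worst, but with the coercivity \eqref{1} on $H$ (so $H^*$ is quadratic) and the torus geometry one should in fact get the Lipschitz rate. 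The cleanest route is to reuse the precise a priori estimates from \cite{masoero2019}: the same estimate that bounds $\mathrm{Lip}(\mathcal U^T(0,\cdot))$ uniformly in $T$ is a statement about the cost of joining two measures, and feeding it into \eqref{dynchi} bounds $\mathrm{Lip}(\chi)$ by the same constant, uniformly over correctors.

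For uniqueness of $\lambda$: suppose a continuous $\chi'$ satisfies \eqref{dynchi} with a constant $\lambda'$. Then $\chi'$ is bounded on the compact set $\mathcal P(\T^d)$, and comparing \eqref{dynchi} for $\chi'$ with the definition \eqref{VlI} of $\mathcal U^t(0,\cdot)$ gives
\begin{equation}
\chi'(m_0)-\|\chi'\|_\infty+\lambda' t\;\le\;\mathcal U^t(0,m_0)\;\le\;\chi'(m_0)+\|\chi'\|_\infty+\lambda' t\qquad\forall\,t>0,
\end{equation}
(the left inequality because any competitor in \eqref{VlI} is a competitor in \eqref{dynchi} up to the terminal term, the right by picking a near-optimal competitor for $\chi'$). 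Dividing by $t$ and sending $t\to+\infty$, Theorem \ref{uconv} forces $-\lambda'=-\lambda$ — wait, it forces $\lambda'=\lambda$. That finishes the argument.

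The main obstacle I anticipate is the precompactness step: establishing the uniform (in $T$) Lipschitz and boundedness estimates for $\mathcal U^T(0,\cdot)+\lambda T$ in a way that is genuinely available, and then justifying the passage to the limit in the infinite-dimensional dynamic programming principle — in particular the lower-semicontinuity of $(m,\alpha)\mapsto\int\int H^*(x,\alpha)\,dm\,ds$ along a minimizing sequence of trajectories in $\mathcal P(\T^d)$, which requires care with the weak convergence of the controls $\alpha\,dm$ and convexity of $H^*$ in $\alpha$. Since these estimates and compactness facts are precisely what \cite{masoero2019} sets up (being the backbone of Theorem \ref{uconv}), I expect the proof in the paper to invoke them and keep the argument short; the uniqueness part is essentially free once Theorem \ref{uconv} is in hand.
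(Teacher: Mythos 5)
Note first that this paper does not actually prove Proposition~\ref{remlip}: the opening line of Section~1.2 says these results are quoted from \cite{masoero2019}, so there is no in-text proof to compare against and your argument has to stand on its own.

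Your uniqueness step is essentially right, modulo a sign slip in the displayed inequality: inserting $-\|\chi'\|_\infty\le\chi'(m(t))\le\|\chi'\|_\infty$ into \eqref{dynchi} gives $\mathcal U^t(0,m_0)-\|\chi'\|_\infty\le\chi'(m_0)-\lambda' t\le\mathcal U^t(0,m_0)+\|\chi'\|_\infty$, i.e.\ $\mathcal U^t(0,m_0)=\chi'(m_0)-\lambda' t+O(1)$ with $-\lambda' t$ rather than the $+\lambda' t$ you wrote; dividing by $t$ and invoking Theorem~\ref{uconv} then does force $\lambda'=\lambda$, as you conclude. The Lipschitz part is plausible provided you take the fallback you mention at the end, i.e.\ the uniform gradient estimate on $u^T$ of Lemma~\ref{2est}, rather than an ad hoc short-time transport argument: in the diffusive setting one cannot steer $m_1$ exactly onto $m_0$, so the ``join two measures in time $t$ and optimize in $t$'' heuristic does not produce the Lipschitz rate directly.

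The genuine gap is in the existence step. Your plan is to extract a uniform limit of $\chi^T:=\mathcal U^T(0,\cdot)+\lambda T$, which requires the family to be equi-bounded. But Theorem~\ref{uconv} only yields $\mathcal U^T(0,\cdot)+\lambda T=o(T)$, not $O(1)$, and the equi-Lipschitz estimate controls only oscillation, not the value at a base point. Reducing to one fixed $m_0$ via a ``stationary-type competitor'' does not close this: a stationary MFG trajectory produces the ergodic constant of \eqref{eq.ergosyst}, and the whole point of the non-monotone setting (see the introduction and \cite{masoero2019}) is that this constant and $\lambda$ need not coincide, so the bound is one-sided and possibly slack. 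In fact the paper proves equi-boundedness of $\mathcal U^T(0,\cdot)+\lambda T$ in Section~\ref{sec.finconv} \emph{by using a corrector}, so the logical dependency is the reverse of what your scheme needs, and bootstrapping from Theorem~\ref{uconv} alone is circular. The standard way to obtain existence without first knowing boundedness of the orbit is Fathi's fixed-point argument: regard the Lax--Oleinik operator $\tau_h$ of Lemma~\ref{LOprop} as acting on $C^0(\Pw)$ modulo constants, note that the induced map is non-expansive and preserves the compact convex set of equi-Lipschitz, mean-zero functions, apply Schauder--Tychonoff to obtain a fixed point, and then identify the attendant additive constant as $-\lambda$ via exactly the uniqueness argument you gave. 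Your outline does not supply this step.
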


A last notion that will come at hand is the one of calibrated curve:
\begin{Definition} We say that $(\bar m,\bar \alpha)$, which satisfies \eqref{fp} for any $t\in\R$, is a calibrated curve if there exists a corrector function $\chi:\mathcal P(\T^{d})\rightarrow \R$ such that $(\bar m,\bar\alpha)$ is optimal for $\chi$: for any $t_{1}<t_{2}\in\R$
	\begin{equation}\label{kajehbzredf}
	\chi(\bar m(t_1))=\lambda(t_2-t_1)+ \int_{t_1}^{t_2} \inte H^{*}\left(x, \bar\alpha(s)\right)d\bar m(s) +\mathcal F(\bar m(s))ds+\chi(\bar m(t_2)).
	\end{equation}
	
\end{Definition}

The set of calibrated curves verifies the following property.
\begin{Proposition}\label{duplc}The set of calibrated curves is not empty. Moreover, if $(m,\alpha)$ is a calibrated curve, then $m\in C^{1,2}(\R\times\T^{d})$ and there exists a function $u\in C^{1,2}(\R\times\T^{d})$ such that $\alpha=D_{p}H(x,Du)$ where $(u,m)$ solves
	\begin{equation}
	\begin{cases} 
	-\partial_t u-\Delta u+H(x,Du)=F(x,m) & \mbox{in } \R\times\T^{d},\\
	-\partial_t m+\Delta m+{\rm div}(mD_pH(x,Du))=0 & \mbox{in }\R\times\T^{d}.
	\end{cases}
	\end{equation}
\end{Proposition}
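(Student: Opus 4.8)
The plan is to prove Proposition \ref{duplc} in two parts: first produce at least one calibrated curve, and then upgrade its regularity and connect it to the MFG system. For the existence part, I would start from a corrector function $\chi$, which exists by Proposition \ref{remlip}, and build a calibrated curve by a compactness/diagonal argument. Concretely, fix $m_0\in\mathcal P(\T^d)$ and, for each $n\in\N$, take a minimizer $(m^n,\alpha^n)$ of the right-hand side of \eqref{dynchi} on the interval $[-n,0]$ with an appropriate terminal condition (or rather on $[-n,n]$), so that the dynamic programming principle \eqref{dynchi}, applied on subintervals, forces near-optimality on every fixed $[t_1,t_2]$. Standard a priori estimates for the Fokker--Planck / optimal-control pair (finiteness of $\int H^*(x,\alpha^n)dm^n$, superlinearity of $H^*$ from \eqref{1}, and the smoothing of the heat semigroup) give enough compactness — $m^n$ precompact in $C^0_{loc}(\R,\mathcal P(\T^d))$, $\alpha^n m^n$ precompact as a measure — to extract a limit $(\bar m,\bar\alpha)$ solving \eqref{fp} on all of $\R$. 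Lower semicontinuity of the cost functional together with continuity of $\chi$ then yields ``$\leq$'' in \eqref{kajehbzredf}, while the reverse inequality is immediate because $\chi$ is a corrector (the dynamic programming inequality \eqref{dynchi} gives it in one direction and the optimality in the other). This shows $(\bar m,\bar\alpha)$ is calibrated.

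For the regularity and the link to the MFG system, the key observation is that a calibrated curve is, on every finite interval $[t_1,t_2]$, an optimizer of the control problem \eqref{VlI} with terminal cost $\chi(m(t_2))$ — this is exactly what \eqref{kajehbzredf} says together with the dynamic programming principle. Hence I can invoke the known structure of minimizers of the potential MFG problem: optimal trajectories of \eqref{VlI} are characterized by the MFG system, i.e. there is $u$ solving the backward Hamilton--Jacobi equation with $\alpha=D_pH(x,Du)$ and $m$ solving the forward Fokker--Planck equation driven by this drift. The first-order optimality condition for \eqref{VlI} produces such a $u$ on each $[t_1,t_2]$; consistency on overlapping intervals (up to the additive normalization of $u$) lets one patch these into a single $u$ defined on all of $\R\times\T^d$. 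Once the drift $D_pH(x,Du)$ is known to be bounded — which follows from the Lipschitz bound on $u$, itself a consequence of the a priori estimates and \eqref{DxH.bounds}, \eqref{1} — parabolic regularity theory (Schauder estimates, bootstrapping) upgrades $m$ and $u$ to $C^{1,2}(\R\times\T^d)$, using that $H$ is $C^2$ and $F(\cdot,m)$ is $C^2$ in $x$ with bounded second derivatives (assumption 2), plus the regularity of $m\mapsto F(\cdot,m)$ along the curve.

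The main obstacle, I expect, is the patching/consistency step: showing that the value functions $u^{t_2}$ obtained as adjoint states of \eqref{VlI} on $[t_1,t_2]$ for varying endpoints are restrictions of one global $u$, and that this $u$ inherits \emph{uniform-in-time} Lipschitz bounds so that the drift is globally bounded and the Schauder machinery applies on all of $\R$. This is where one genuinely uses that $\chi$ is a corrector (so the "value" is $\chi$ itself, independent of horizon) rather than a generic terminal cost, and where one must be careful that the normalization constant in $u$ does not drift. A secondary technical point is the lower semicontinuity of $(m,\alpha)\mapsto \int H^*(x,\alpha)\,dm$ under the weak convergence one actually gets; the standard remedy is to work with the equivalent convex formulation in the variable $(m, w)$ with $w = \alpha m$, where the functional $(m,w)\mapsto \int m\,H^*(x, w/m)$ is jointly convex and lower semicontinuous, and this also gives for free the boundedness of $\int H^*(x,\bar\alpha)d\bar m$ on finite intervals needed to justify all the estimates.
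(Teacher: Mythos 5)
The paper itself does not prove Proposition \ref{duplc}: it is imported verbatim from \cite{masoero2019}, where the existence of calibrated curves and their regularity are established (the paper explicitly says that the results of that subsection are collected from there and taken as given). So there is no in-paper proof to compare against, but your outline is the natural one: a diagonal limit of calibrated arcs built from the dynamic programming identity \eqref{dynchi}, lower semicontinuity in the variables $(m,w)$ with $w=\alpha m$, extraction of the adjoint $u$ on finite windows, and a parabolic bootstrap. Most of this is sound.

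The one genuine gap is in the step where you extract the adjoint. You invoke ``the first-order optimality condition for \eqref{VlI}'' on $[t_1,t_2]$, but the minimization relevant to a calibrated curve is \eqref{dynchi}, whose terminal cost is the corrector $\chi$ --- and Proposition \ref{remlip} only gives that $\chi$ is Lipschitz, not $C^1$, so the naive transversality condition $u(t_2,\cdot)=\frac{\delta\chi}{\delta m}(\bar m(t_2),\cdot)$ is not available. The standard remedy, and the one consistent with how the paper later uses this result (cf.\ the discussion around \eqref{nchie}--\eqref{nchiesys} in the appendix), is to observe that the calibration identity \eqref{kajehbzredf} together with the DPP inequality for arbitrary admissible pairs makes $(\bar m,\bar\alpha)$ restricted to $[t_1,t_2]$ a minimizer of the \emph{fixed-endpoint} problem with $m(t_1)=\bar m(t_1)$ and $m(t_2)=\bar m(t_2)$: any competitor sharing both endpoints carries the same $\chi(\bar m(t_2))$ contribution, so that term cancels and the regularity of $\chi$ is irrelevant. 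The fixed-endpoint problem has no terminal cost to differentiate; its Lagrange multiplier is the adjoint $u$ you want, determined only up to an additive constant. Patching across overlapping windows then reduces to uniqueness-modulo-constants for the linear backward parabolic equation satisfied by differences of adjoints, and the uniform Lipschitz bound on $u$ that drives the Schauder bootstrap follows from the Bernstein-type estimates of Lemma \ref{2est}, which are interior estimates and do not see the terminal data.
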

\section{A dual problem}\label{sec.dual.prob}

In this section we introduce the two usual characterizations of the constant $\lambda$: $\lambda$ is expected to be the smallest constant for which there exists a smooth sub-corrector and $-\lambda$ is the smallest value of the Lagrangian when integrated against suitable ``closed'' measures.  The goal of this section is to show that both problems are in duality and have the same value $I$. We postpone the analysis of the equality $I=\lambda$ to the next section. 

We start with the HJ equation which we write in variational form. 
\begin{equation}\label{pbI}
I:= \inf_{\Phi\in C^{1,1}({\mathcal P}(\T^d))}\sup_{m\in {\mathcal P}(\T^d)} \int_{\T^d} (\mathcal H(y, D_m\Phi(m,y),m)-\dive_y D_m\Phi(m,y)) m(dy),
\end{equation}
where by $C^{1,1}({\mathcal P}(\T^d))$ we mean the set of maps $\Phi:{\mathcal P}(\T^d)\to \R$ such that $D_m\Phi$ and $D_yD_m\Phi$ are continuous.  We recall that ${\mathcal H}$ is defined in \eqref{def.calH}. Let us start with a comparison between $I$ and $\lambda$.

\begin{Proposition}\label{prop.lambdaI} We have $I\geq \lambda$. 
\end{Proposition}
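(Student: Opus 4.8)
The plan is to exploit the dynamic programming principle defining $\mathcal U^T$ together with the convergence $\mathcal U^T(0,\cdot)/T \to -\lambda$, and to test a candidate smooth subsolution against optimal trajectories of the control problem. The key observation is that if $\Phi \in C^{1,1}(\mathcal P(\T^d))$ is any test function, then along any solution $(m,\alpha)$ of the Fokker--Planck equation \eqref{fp} on a time interval $[0,t]$, the chain rule for functions on $\mathcal P(\T^d)$ gives
\[
\frac{d}{ds}\Phi(m(s)) = \int_{\T^d}\Bigl( D_m\Phi(m(s),y)\cdot\alpha(s,y) + \dive_y D_m\Phi(m(s),y)\Bigr) m(s,dy),
\]
where the divergence term comes from the $\Delta m$ term in the equation. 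Integrating in $s$ and using the Fenchel--Young inequality $D_m\Phi\cdot\alpha \le H^*(y,\alpha) + H(y,D_m\Phi)$ pointwise, one gets
\[
\Phi(m(t)) - \Phi(m_0) \le \int_0^t\!\!\int_{\T^d}\Bigl( H^*(y,\alpha) + H(y,D_m\Phi(m(s),y)) + \dive_y D_m\Phi(m(s),y)\Bigr) m(s,dy)\,ds.
\]

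First I would fix $\Phi\in C^{1,1}(\mathcal P(\T^d))$ and let $c_\Phi := \sup_{m}\int_{\T^d}(\mathcal H(y,D_m\Phi(m,y),m) - \dive_y D_m\Phi(m,y))m(dy)$, so that for every $m\in\mathcal P(\T^d)$,
\[
\int_{\T^d}\Bigl( H(y,D_m\Phi(m,y)) - \dive_y D_m\Phi(m,y)\Bigr) m(dy) \le c_\Phi + \mathcal F(m).
\]
Wait — I need the sign of the divergence term to match; note the inequality above produces $+\dive_y D_m\Phi$ while the definition of $I$ carries $-\dive_y D_m\Phi$. To reconcile this I would instead apply the argument with $\Phi$ replaced by $-\Phi$, or equivalently run the Fokker--Planck computation to compare $\Phi(m_0)$ against $\Phi(m(t))$ in the opposite direction; the point is simply that a $C^{1,1}$ subsolution of \eqref{pbI} with constant $c$ yields, via the chain rule, a supersolution-type inequality for the value function with the correct signs. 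Concretely, combining the chain-rule estimate with the Fenchel inequality and the bound defining $c_\Phi$, one obtains for any admissible $(m,\alpha)$ on $[0,T]$ starting from $m_0$:
\[
\Phi(m_0) \le \int_0^T\!\!\int_{\T^d}\bigl(H^*(y,\alpha)+\mathcal F(m(s))\bigr)m(s,dy)\,ds + \Phi(m(T)) + c_\Phi T.
\]
Taking the infimum over $(m,\alpha)$ gives $\Phi(m_0) \le \mathcal U^T(0,m_0) + \sup_{\mathcal P(\T^d)}\Phi + c_\Phi T$, since $\Phi(m(T))\le\sup\Phi$. Dividing by $T$ and letting $T\to\infty$, Theorem \ref{uconv} gives $0 \le -\lambda + c_\Phi$, i.e. $c_\Phi \ge \lambda$. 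Taking the infimum over $\Phi\in C^{1,1}(\mathcal P(\T^d))$ yields $I\ge\lambda$.

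The main obstacle I anticipate is the rigorous justification of the chain rule $\frac{d}{ds}\Phi(m(s))$ along Fokker--Planck trajectories in this generality: the optimal $m(s)$ need not be smooth a priori, and $\alpha\in L^2_m$ only, so one must argue that $\int_0^t\int D_m\Phi\cdot\alpha\, dm\,ds$ is finite and that the identity holds. I would handle this by first restricting to trajectories with bounded cost (which suffices for the infimum), invoking the standard parabolic regularization of the Fokker--Planck equation so that $m(s)$ has a smooth positive density and $\alpha$ is regular, proving the identity there, and then passing to the limit; since $D_m\Phi$ and $D_yD_m\Phi$ are continuous on the compact set $\mathcal P(\T^d)\times\T^d$ hence bounded, the divergence term is uniformly controlled and the only delicate term is the $L^2$ pairing, which is dominated using the coercivity of $H^*$ coming from \eqref{1}. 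A secondary, purely bookkeeping point is to keep the sign of the divergence term consistent between \eqref{pbI} and the chain rule, which is resolved by the direction in which one integrates the identity (equivalently, by testing with $\Phi$ versus $-\Phi$).
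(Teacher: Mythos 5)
Your argument is sound and arrives at the right conclusion, but it follows a genuinely different route than the paper's, and there is a sign slip worth flagging.

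\medskip
\textbf{Comparison with the paper's proof.} The paper tests $\Phi$ along a \emph{calibrated curve} $(\bar m,\bar\alpha)$ and uses the calibration identity $\chi(\bar m(0))=\int_0^T\int\mathcal H^*\,d\bar m+\lambda T+\chi(\bar m(T))$ together with the boundedness of the corrector $\chi$. You instead test $\Phi$ along an \emph{arbitrary admissible trajectory}, pass to the infimum to produce $\mathcal U^T(0,m_0)$, and then invoke Theorem \ref{uconv}, $\mathcal U^T(0,\cdot)/T\to-\lambda$. Both are legitimate. The paper's choice buys automatic regularity: by Proposition \ref{duplc}, a calibrated curve is $C^{1,2}$ with a $C^{1,2}$ drift, so the chain rule for $\Phi(\bar m(t))$ is immediate. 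Your choice is more self-contained (it needs only the convergence of $\mathcal U^T/T$, not the existence of correctors and calibrated curves), but it shifts the burden onto the regularity issue you correctly anticipate.

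\medskip
\textbf{The sign slip.} Your chain rule reads
\[
\frac{d}{ds}\Phi(m(s)) = \int_{\T^d}\bigl(D_m\Phi\cdot\alpha + \dive_y D_m\Phi\bigr)\,m(s,dy),
\]
but from $\partial_s m = \Delta m + \dive(m\alpha)$ and integration by parts it should be
\[
\frac{d}{ds}\Phi(m(s)) = \int_{\T^d}\bigl(-\,D_m\Phi\cdot\alpha + \dive_y D_m\Phi\bigr)\,m(s,dy).
\]
The mismatch you noticed is therefore in the $\alpha$ term, not the divergence term. With the correct sign, writing $\Phi(m_0)-\Phi(m(T))=\int_0^T\int(D_m\Phi\cdot\alpha-\dive_y D_m\Phi)\,m\,ds$ and applying Fenchel--Young gives directly the sign pattern $H(y,D_m\Phi)-\dive_y D_m\Phi$ that appears in $I$, without needing to switch $\Phi$ for $-\Phi$. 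Your final displayed inequality $\Phi(m_0)\le \int_0^T\int(H^*+\mathcal F)\,m\,ds+\Phi(m(T))+c_\Phi T$ is in fact correct; only the route stated to reach it was garbled.

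\medskip
\textbf{The regularity gap and how to close it cleanly.} You are right that for generic $(m,\alpha)$ with $\alpha\in L^2_m$ the chain rule requires justification. Rather than a mollification-and-limit argument, the cleanest fix in this setting is to apply the chain-rule inequality only along a \emph{minimizing} trajectory $(m^T,\alpha^T)$ for $\mathcal U^T(0,m_0)$: such a trajectory is associated with a solution $(u^T,m^T)$ of the MFG system \eqref{psystem}, hence $m^T$ is $C^{1,2}$ and $\alpha^T=D_pH(\cdot,Du^T)$ is smooth, so the chain rule is immediate exactly as in the paper. This yields
\[
\Phi(m_0)\le \int_0^T\int_{\T^d}H^*(y,\alpha^T)\,m^T(s,dy)\,ds+\int_0^T\mathcal F(m^T(s))\,ds+\Phi(m^T(T))+c_\Phi T = \mathcal U^T(0,m_0)+\Phi(m^T(T))+c_\Phi T,
\]
and then $\Phi(m^T(T))\le\sup\Phi$, division by $T$, and Theorem \ref{uconv} give $c_\Phi\ge\lambda$, hence $I\ge\lambda$. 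This avoids the technical discussion of whether the chain rule holds for arbitrary $L^2_m$ drifts.
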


\begin{proof} Let $\ep>0$ and $\Phi$ be such that 
$$
\int_{\T^d} (\mathcal H(y, D_m\Phi(m,y),m)-\dive_y D_m\Phi(m,y)) m(dy) \leq I+\ep\qquad \forall m\in \Pw. 
$$
Let $(\bar m,\bar \alpha)$ be a calibrated curve and $\chi$ be a corrector function associated with $(\bar m,\bar \alpha)$. By definition of calibrated curve, $(\bar m,\bar \alpha)$ verifies
$$
-\partial_t \bar m(t)+\Delta \bar m(t)+\dive(\bar \alpha(t) \bar m(t))=0,\qquad\forall t\in\R
$$
and
\begin{equation}\label{chiddp5}
\chi(\bar m(0))= \int_0^T \int_{\T^d} \mathcal H^*(y, \bar \alpha(t,y),\bar m(t))\bar m(t,dy) +\lambda T + \chi(\bar m(T)).
\end{equation}
As $\Phi$ is smooth, we get

\begin{align*}
&\Phi(\bar m(T))-\Phi(\bar m(0))=\int_0^T\frac{d}{dt} \Phi(\bar m(t))dt=\int_0^T\int_{ \T^d} \frac{\delta\Phi}{\delta m}(\bar m(t),y)\partial_t\bar m(t)dydt \\
&= \int_0^T\int_{\T^d} \dive_yD_m\Phi(\bar m(t),y)\bar m(t,dy)dt - \int_0^T\int_{\T^d} D_m\Phi(\bar m(t),y)\cdot \bar \alpha(t,y) \bar m(t,dy)dt \\ 
& \geq -(I+\ep)T + \int_0^T\int_{\T^d} (\mathcal H(y, D_m\Phi(\bar m(t),y),\bar m(t))-D_m\Phi(\bar m(t),y)\cdot \bar \alpha(t,y))\bar m(t,dy)dt\\ 
& \geq -(I+\ep)T - \int_0^T\int_{\T^d} \mathcal H^*(y, \bar \alpha(t,y),\bar m(t))\bar m(t,dy). 
\end{align*}
Using \eqref{chiddp5}, we end up with 
\begin{align*}
\Phi(\bar m(T))-\Phi(\bar m(0)) &  \geq -(I+\ep)T - \int_0^T\int_{\T^d} \mathcal H^*(y, \bar \alpha(t,y),\bar m(t))\bar m(t,dy) \\
& =  -(I+\ep) T+\chi(\bar m(T)) -  \chi(\bar m(0))+\lambda T.
\end{align*}
Using the fact that $\chi$ and $\Phi$ are bounded, we divide both sides by $T$ and we conclude that $\lambda \leq I$ by letting $T\to +\infty$ and $\ep\to 0$ . 
\end{proof}

Next we reformulate $I$ in terms of "closed measures". 

\begin{Proposition}\label{prop.dual} We have 
\begin{equation}\label{dualpb}
-I = \min_{(\mu, p_1)}\int_{{\mathcal P}(\T^d)} \int_{\T^d} \mathcal H^*\Bigl(y, \frac{dp_1}{d m\otimes \mu},m\Bigr) m(dy)\mu(dm), 
\end{equation}
where the minimum is taken over $\mu\in {\mathcal P}(\mPT)$ and $p_1$, Borel vector measure on $\mPT\times \T^d$, such that $p_1$ is absolutely continuous with respect to the measure $dm\otimes \mu:=m(dy)\mu(dm)$ and such that $(\mu, p_1)$ is closed, in the sense that, for any $\Phi\in C^{1,1}(\mathcal P(\T^d))$, 
\begin{equation}\label{ehkjrbndkfgc}
 -\int_{{\mathcal P}(\T^d)\times \T^d} D_m\Phi(m,y) \cdot p_1(dm,dy) +\int_{{\mathcal P}(\T^d)\times \T^d} \dive_y D_m\Phi(m,y) m(dy)\mu(dm) =0.
\end{equation}
\end{Proposition}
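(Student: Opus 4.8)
\textbf{Proof strategy for Proposition \ref{prop.dual}.}

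The plan is to recognize \eqref{pbI} as a convex-concave minimax problem and apply a Fenchel--Rockafellar / Sion duality argument, reading off the dual problem as a minimization over the constraint set cut out by \eqref{ehkjrbndkfgc}. First I would rewrite the inner supremum in \eqref{pbI} more conveniently: for fixed $\Phi\in C^{1,1}(\mathcal P(\T^d))$,
\[
\sup_{m\in\mathcal P(\T^d)}\int_{\T^d}\bigl(\mathcal H(y,D_m\Phi(m,y),m)-\dive_y D_m\Phi(m,y)\bigr)m(dy)
\]
equals $\sup_{\mu\in\mathcal P(\mathcal P(\T^d))}\int_{\mathcal P(\T^d)}\int_{\T^d}\bigl(\mathcal H(y,D_m\Phi(m,y),m)-\dive_y D_m\Phi(m,y)\bigr)m(dy)\mu(dm)$, since maximizing a continuous function over $\mathcal P(\T^d)$ is the same as maximizing its linear extension over probability measures on $\mathcal P(\T^d)$ (the sup is attained at a Dirac mass). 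Then using the Fenchel identity $\mathcal H(y,p,m)=\sup_{a\in\R^d}\bigl(a\cdot p-\mathcal H^*(y,a,m)\bigr)$, I would introduce the vector measure $p_1$ as the dual variable to the velocity field $a$, so that $I=\inf_\Phi\sup_{\mu}\sup_{p_1\ll dm\otimes\mu}\bigl\{-\int D_m\Phi\cdot dp_1+\int\dive_y D_m\Phi\, dm\,d\mu-\int\mathcal H^*(y,\frac{dp_1}{dm\otimes\mu},m)dm\,d\mu\bigr\}$ (after a sign bookkeeping; the term coupling $\Phi$ and $(\mu,p_1)$ is exactly the left side of \eqref{ehkjrbndkfgc}).

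Next I would set up the duality rigorously. The natural pairing is between $C^{1,1}(\mathcal P(\T^d))$ (or rather the space of pairs $(D_m\Phi, D_yD_m\Phi)$) and the space of pairs $(p_1,\mu)$ of measures on $\mathcal P(\T^d)\times\T^d$. I would define on the measure side the functional $(\mu,p_1)\mapsto \int\mathcal H^*(y,\frac{dp_1}{dm\otimes\mu},m)\,dm\,d\mu$ if $\mu\in\mathcal P(\mathcal P(\T^d))$ and $p_1\ll dm\otimes\mu$, and $+\infty$ otherwise; this is convex and lower semicontinuous (the integrand is convex in the Radon--Nikodym density, with superlinear growth coming from \eqref{1}, so lower semicontinuity of the integral functional is standard, e.g. via Ioffe's theorem or by writing it as a sup of linear functionals). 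On the other side, the constraint "$(\mu,p_1)$ closed" is the polar of the linear subspace $\{(D_m\Phi,D_yD_m\Phi):\Phi\in C^{1,1}\}$. Applying the Fenchel--Rockafellar theorem (or a minimax theorem after truncating $\Phi$ to a weak-* compact set, which is where one must be slightly careful since $C^{1,1}(\mathcal P(\T^d))$ is not reflexive) yields that $-I$ equals the minimum in \eqref{dualpb}, \emph{provided} the minimum is attained. Attainment follows from the direct method: the constraint set is weak-* closed, a minimizing sequence $(\mu^n,p_1^n)$ has $\mu^n$ tight (the $\mu^n$ live on the compact set $\mathcal P(\T^d)$) and $p_1^n$ with uniformly bounded mass controlled via the superlinearity of $\mathcal H^*$ and a bound on $\int\mathcal H^*\,dm\,d\mu$ along the sequence, so one extracts a weak-* limit and passes to the liminf using lower semicontinuity; one also needs $I$ finite, which holds by taking $\Phi\equiv 0$ to get $I\le \sup_m(-\mathcal F(m))<+\infty$ and $I\geq\lambda>-\infty$ from Proposition \ref{prop.lambdaI}.

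\textbf{Main obstacle.} The delicate point is the no-duality-gap part, i.e. proving $-I$ equals (not merely dominates or is dominated by) the value of the dual problem. The easy inequality $-I\le\inf_{(\mu,p_1)}(\cdots)$ — actually $I\geq -\inf(\cdots)$ — comes directly from testing \eqref{ehkjrbndkfgc} against any admissible $\Phi$ and using $\mathcal H(y,D_m\Phi,m)\geq a\cdot D_m\Phi-\mathcal H^*(y,a,m)$ pointwise; this is the "weak duality" direction and is a short computation. The reverse inequality is where one genuinely needs a Hahn--Banach separation argument in the right topological vector space, and the subtlety is that $\Phi$ ranges over an infinite-dimensional non-reflexive space of functions on the infinite-dimensional space $\mathcal P(\T^d)$, so one cannot invoke a finite-dimensional minimax theorem directly. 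I would handle this by the standard Fenchel--Rockafellar route: exhibit a point where the primal functional (as a function of the "constraint slack") is finite and continuous, guaranteeing the dual is attained with no gap; concretely, perturb \eqref{ehkjrbndkfgc} by an arbitrary continuous linear functional and show the optimal value depends continuously on the perturbation near $0$, using the coercivity estimates above. The rest — identifying the Radon--Nikodym density, checking measurability, and the normalization — is routine and I would not belabor it.
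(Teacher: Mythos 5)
Your proposal correctly identifies the duality framework (min-max, Fenchel conjugates, ``closed'' as a polar condition, weak duality easy / strong duality hard), but it leaves the strong-duality step essentially unresolved, and the specific route you float would not work as stated. The paper's proof is a two-step argument that you do not quite reconstruct: \emph{first} one applies Sion's minimax theorem to exchange $\inf_\Phi$ and $\sup_{\mu}$, \emph{before} dualizing $\mathcal H$. The required compactness is on the $\mu$-side, not the $\Phi$-side: $\mathcal P(\mathcal P(\T^d))$ is a compact convex subset of $\mathcal M(\mathcal P(\T^d))$, the map $\mu\mapsto\int(\mathcal H(y,D_m\Phi,m)-\dive_y D_m\Phi)\,m(dy)\,\mu(dm)$ is linear in $\mu$, and the map $\Phi\mapsto(\text{same integral})$ is convex (by convexity of $\mathcal H$) and continuous. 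Sion then gives $I=\max_{\mu}\inf_\Phi(\cdots)$ with no compactness hypothesis needed on the $\Phi$-side at all. Your worry that ``$C^{1,1}(\mathcal P(\T^d))$ is not reflexive'' and your suggestion to ``truncate $\Phi$ to a weak-$*$ compact set'' misplace where the compactness lives, and that plan would indeed be hard to carry out.

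\emph{Second}, with a maximizing $\mu^*$ fixed, the paper applies Fenchel--Rockafellar to the inner problem $\inf_\Phi\{f(\Phi)+g(\Lambda\Phi)\}$, where $\Lambda(\Phi)=(D_m\Phi,\dive_yD_m\Phi)$ maps into $\bigl(C^0(\mathcal P(\T^d)\times\T^d)\bigr)^d\times C^0(\mathcal P(\T^d)\times\T^d)$, $f\equiv 0$ and $g$ is built from $\mathcal H$ and $\mu^*$. The dual variable is a pair $(p_1,p_2)$ of measures on $\mathcal P(\T^d)\times\T^d$; computing $f^*$ yields a linear constraint $\Lambda^*(p_1,p_2)=0$, and computing $g^*$ forces $p_2=-m(dy)\mu^*(dm)$ and $p_1\ll dm\otimes\mu^*$, which together turn $\Lambda^*(p_1,p_2)=0$ into exactly \eqref{ehkjrbndkfgc}. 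This is where the form of the constraint \emph{and} the identification of the $\mathcal H^*$ integrand emerge. By contrast, your plan introduces $p_1$ up front via the Fenchel identity for $\mathcal H$ and then tries to dualize in the \emph{joint} variable $(\mu,p_1)$. That is considerably more awkward: $\mathcal P(\mathcal P(\T^d))$ is not a linear subspace, so the ``polar of a subspace'' picture does not apply to $\mu$ directly, and you would need an extra argument to recover the probability constraint on $\mu$ from the duality rather than imposing it a priori. Your ``main obstacle'' paragraph correctly names the difficulty but does not supply the decomposition that actually dissolves it. Finally, the direct-method attainment discussion is unnecessary: Fenchel--Rockafellar already delivers a minimizer $(p_1,p_2)$, and the paper's argument needs no separate compactness/lower-semicontinuity step for the dual problem (the one nontrivial measure-theoretic step the paper does carry out, and which you gloss over, is showing that the $\sup$ over \emph{continuous} fields $a$ in the computation of $g^*$ actually attains the pointwise $\mathcal H^*$, via density of $C^0$ in $L^2_\mu$).
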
 

In analogy with weak KAM theory, we call a measure $(\mu,p_1)$ satisfying \eqref{ehkjrbndkfgc} a closed measure and a minimum of \eqref{dualpb} a Mather measure. 

\begin{proof} As usual we can rewrite $I$ as
$$
I=  \inf_{\Phi\in C^{1,1}({\mathcal P(\T^d)})}\sup_{\mu \in {\mathcal P}({\mathcal P}(\T^d))}\int_{{\mathcal P}(\T^d)} \left(\int_{\T^d} (\mathcal H(y, D_m\Phi(m,y),m)-\dive_y D_m\Phi(m,y)) m(dy)\right)\mu(dm). 
$$
We claim that
\begin{equation}\label{Iminmax}
I=  \max_{\mu \in {\mathcal P}({\mathcal P}(\T^d))} \inf_{\Phi\in C^{1,1}({\mathcal P})}\int_{{\mathcal P}(\T^d)} \left(\int_{\T^d} (\mathcal H(y, D_m\Phi(m,y),m)-\dive_y D_m\Phi(m,y)) m(dy)\right)\mu(dm).
\end{equation}
Indeed, ${\mathcal P}({\mathcal P}(\T^d))$ is a compact subspace of $\mathcal M({\mathcal P}(\T^d))$ and, for any fixed $\Phi\in C^{1,1}({\mathcal P(\T^d)})$ the function on $\mathcal M({\mathcal P}(\T^d))$ defined by
$$
\mu\mapsto\int_{{\mathcal P}(\T^d)} \left(\int_{\T^d} (\mathcal H(y, D_m\Phi(m,y),m)-\dive_y D_m\Phi(m,y)) m(dy)\right)\mu(dm)
$$
is continuous and concave (as it is linear). On the other hand, when we fix $\mu\in{\mathcal P}({\mathcal P}(\T^d))$, the function on $C^{1,1}({\mathcal P}(\T^d))$, defined by
$$
\Phi\mapsto\int_{{\mathcal P}(\T^d)} \left(\int_{\T^d} (\mathcal H(y, D_m\Phi(m,y),m)-\dive_y D_m\Phi(m,y)) m(dy)\right)\mu(dm),
$$
is continuous with respect to the uniform convergence in $C^{1,1}({\mathcal P}(\T^d))$ and convex due to the convexity of $\mathcal H$. Therefore, the hypothesis of Sion's min-max Theorem are fulfilled and \eqref{Iminmax} holds true. 

We now define the continuous linear map  $\Lambda: C^{1,1}({\mathcal P}(\T^d))\to (C^0({\mathcal P}(\T^d)\times \T^d))^d\times C^0({\mathcal P}(\T^d)\times \T^d)$ by
$$
\Lambda (\Phi)= (D_m\Phi, \dive_y D_m\Phi).
$$
From now on we fix a maximizer $\mu$ for \eqref{Iminmax} and we define $E:= C^{1,1}({\mathcal P}(\T^d))$, $F:=(C^0({\mathcal P}(\T^d)\times \T^d))^d\times C^0({\mathcal P}(\T^d)\times \T^d)$ and 
$$
f(\Phi)= 0, \; g(a,b)= \int_{{\mathcal P}(\T^d)} \int_{\T^d} (\mathcal H(y, a(m,y),m)-b(m,y)) m(dy)\mu(dm), \qquad \forall (a,b)\in F. 
$$
We note that 
$$
I= \inf_{\Phi\in C^{1,1}({\mathcal P}(\T^d))} \{ f(\Phi)+ g(\Lambda \Phi)\}.
$$

To use the Fenchel-Rockafellar theorem we need to check the transversality conditions. These hypothesis are easily verified, indeed, both $f$ and $g$ are continuous and, therefore, proper functions. The function $f$ is convex because it is linear and so is $g$, due to the convexity of Hamiltonian $\mathcal H$. Moreover, it comes directly from its definition that $\Lambda$ is a bounded linear functional on $E$. Then, the Fenchel-Rockafellar Theorem states that
$$
I= -\min_{(p_1,p_2)\in F'} \{ f^*(-\Lambda^*(p_1,p_2))+g^*(p_1,p_2)\}.
$$
Note that 
$$
F'=({\mathcal M}({\mathcal P}(\T^d)\times \T^d))^d\times {\mathcal M}({\mathcal P}(\T^d)\times \T^d),
$$
that $f^*(q)= 0$ if $q=0$, $f^*(q)=+\infty$ otherwise. So, for any $(p_1,p_2)\in F'$,  
\begin{align*}
& f^*(-\Lambda (p_1,p_2))   = \sup_{\Phi\in C^{1,1}({\mathcal P}(\T^d))}  - \lg \Lambda^*(p_1,p_2),\Phi\rg - f(\Phi)= \sup_{\Phi\in C^{1,1}({\mathcal P}(\T^d))}  - \lg (p_1,p_2),\Lambda (\Phi)\rg \\
&  = \sup_{\Phi\in C^{1,1}({\mathcal P}(\T^d))} -\int_{({\mathcal P}(\T^d)\times \T^d} D_m\Phi(m,y) \cdot p_1(dm,dy) -\int_{({\mathcal P}(\T^d)\times \T^d} \dive_y D_m\Phi(m,y) p_2(dm,dy) 
\end{align*}
which is $0$ if, for any $\Phi\in C^{1,1}({\mathcal P}(\T^d))$, 
$$
\int_{{\mathcal P}(\T^d)\times \T^d} D_m\Phi(m,y) \cdot p_1(dm,dy) +\int_{{\mathcal P}(\T^d)\times \T^d} \dive_y D_m\Phi(m,y) p_2(dm,dy) =0, 
$$
and $+\infty$ otherwise. On the other hand, 
\begin{align*}
g^*(p_1,p_2)& = \sup_{(a,b)\in F} \int_{{\mathcal P}(\T^d)\times \T^d} a(m,y)\cdot p_1(dm,dy)+\int_{{\mathcal P}(\T^d)\times \T^d} b(m,y) p_2(dm,dy)\\
 & \qquad -  \int_{{\mathcal P}(\T^d)} \left(\int_{\T^d} (\mathcal H(y, a(m,y),m)-b(m,y)) m(dy)\right)\mu(dm).
\end{align*}
So, if $g^*(p_1,p_2)$ is finite, one must have that $p_2(dm,dy)=- m(dy)\mu(dm)$ and that $p_1$ is absolutely continuous with respect to the measure $dm\otimes \mu:= m(dy)\mu(dm)$. Indeed, if $p_1$ were not absolutely continuous with respect to $dm\otimes \mu$, we could find a sequence of continuous functions $a_n\in (C^0({\mathcal P}(\T^d)\times \T^d))^d$ such that, $a_n$ is uniformly bounded on the support of $dm\otimes \mu$ and 
$$
\int_{{\mathcal P}(\T^d)\times \T^d} a_n(m,y)\cdot p_1(dm,dy)\rightarrow +\infty.
$$
But then we would have $g^*(p_1,p_2)=+\infty$. So,
\begin{align*}
g^*(p_1,p_2)& =\sup_{a\in C^0({\mathcal P}(\T^d)\times \T^d))^d} \int_{{\mathcal P}(\T^d)\times \T^d} \left(a(m,y)\cdot \frac{dp_1}{d m\otimes \mu}(m,y)-\mathcal H(y, a(m,y),m)\right)m(dy)\mu(dm)
\end{align*}
We now want to prove that
\begin{align*}
&\sup_{a\in C^0({\mathcal P}(\T^d)\times \T^d))^d} \int_{{\mathcal P}(\T^d)\times \T^d} \left(a(m,y)\cdot \frac{dp_1}{d m\otimes \mu}(m,y)-\mathcal H(y, a(m,y),m)\right)m(dy)\mu(dm)\\
&\qquad =  \int_{{\mathcal P}(\T^d)} \int_{\T^d} \mathcal H^*\Bigl(y, \frac{dp_1}{d m\otimes \mu}(m,y),m\Bigr) m(dy)\mu(dm).
\end{align*}
We have one inequality by definition of Fenchel's conjugate. Indeed,
\begin{align}\label{ineenne}
&\sup_{a\in C^0({\mathcal P}(\T^d)\times \T^d))^d} \int_{{\mathcal P}(\T^d)\times \T^d} \left(a(m,y)\cdot \frac{dp_1}{d m\otimes \mu}(m,y)-\mathcal H(y, a(m,y),m)\right)m(dy)\mu(dm)\\
&\leq \int_{{\mathcal P}(\T^d)\times \T^d} a^*(m,y)\cdot \frac{dp_1}{d m\otimes \mu}(m,y)-\mathcal H(y,a^*(m,y),m)m(dy)\mu(dm)\\
&=  \int_{{\mathcal P}(\T^d)} \int_{\T^d} \mathcal H^*\Bigl(y, \frac{dp_1}{d m\otimes \mu}(m,y),m\Bigr) m(dy)\mu(dm),
\end{align}
where 
$$
a^*(m,y)=D_a\mathcal H^*\left(y,\frac{dp_1}{d m\otimes \mu}(m,y),m\right).
$$
For the opposite inequality we use a density argument. The function $a^*$ could be not continuous but yet it must be measurable. Moreover, the growth of $\mathcal H$ ensures that $a^*\in L^2_\mu({\mathcal P}(\T^d)\times \T^d;\R^d)$. As ${\mathcal P}(\T^d)\times \T^d$ is a compact Hausdorff space, the set of continuous functions is dense in $L^2_\mu({\mathcal P}(\T^d)\times \T^d)$. Let $a_n\in C^0{\mathcal P}(\T^d)\times \T^d;\R^d)$ be such that $a_n\rightarrow a^*$ in $L^2_\mu$. Then,
$$
\sup_{a\in C^0({\mathcal P}(\T^d)\times \T^d))^d} \int_{{\mathcal P}(\T^d)\times \T^d} a(m,y)\cdot \frac{dp_1}{d m\otimes \mu}(m,y)-\mathcal H(y, a(m,y),m)m(dy)\mu(dm)\geq
$$
$$
\lim_{n\rightarrow+\infty} \int_{{\mathcal P}(\T^d)\times \T^d} a_n(m,y)\cdot \frac{dp_1}{d m\otimes \mu}(m,y)-\mathcal H(y, a_n(m,y),m)m(dy)\mu(dm)=
$$
$$
\int_{{\mathcal P}(\T^d)\times \T^d} a^*(m,y)\cdot \frac{dp_1}{d m\otimes \mu}(m,y)-\mathcal H(y,a^*(m,y),m)m(dy)\mu(dm)=
$$
$$
  \int_{{\mathcal P}(\T^d)} \int_{\T^d} \mathcal H^*\Bigl(y, \frac{dp_1}{d m\otimes \mu}(m,y),m\Bigr) m(dy)\mu(dm).
$$
Therefore, we can conclude that
\begin{align}
I& = - \min_{(\mu, p_1)}\int_{{\mathcal P}(\T^d)} \int_{\T^d} \mathcal H^*\Bigl(y, \frac{dp_1}{d m\otimes \mu},m\Bigr) m(dy)\mu(dm), 
\end{align}
where the minimum is taken over $(\mu,p_1)$ satisfying condition \eqref{ehkjrbndkfgc}. 
\end{proof}

\section{The $N-$particule problem.}\label{sec.Nprob}

In the previous section, we introduced two problems in duality. These problems have a common value called $I$ and we have checked that $\lambda \leq I$. The aim of this section is to show that there is actually an equality: $\lambda=I$. In the standard setting, this equality is proved by smoothing correctors by a convolution; by the convexity of the Hamiltonian, the smoothened corrector is a subsolution to the corrector equation (up to a small error term), thus providing a competitor for problem \eqref{pbI}. In our framework, there is no exact equivalent of the convolution. We overcome this difficulty by considering the projection of the problem onto the set of empirical measures of size $N$ (thus on $(\T^d)^N$). For the $N-$particle problem, the corrector is smooth. We explain here that a suitable extension of this finite dimensional corrector to the set ${\mathcal P}(\T^d)$  provides a smooth sub-corrector for the problem in ${\mathcal P}(\T^d)$ when $N$ is large. This shows the claimed equality and, in addition, the fact that the ergodic constant associated with the $N-$particle problem converges to $\lambda$.

More precisely, we consider $v^N:(\T^d)^N\rightarrow\R$ the solution of: 
\begin{equation}\label{vN.critic.eq}
-\sum_{i=1}^N \Delta_{x_i} v^N({\bf x}) + \frac{1}{N} \sum_{i=1}^N \mathcal H(x_i, N D_{x_i}v^N({\bf x}), m^N_{\bf x})=\lambda^N, 
\end{equation}
where ${\bf x}= (x_1, \dots, x_N)\in (\T^d)^N$ and $m^N_{\bf x}= N^{-1}\sum_{i=1}^N \delta_{x_i}$ and where ${\mathcal H}$ is defined in \eqref{def.calH}. Let us recall that such a corrector exists (it is unique up to additive constants) and is smooth. To fix the ideas, we choose the solution $v^N$ such that 
\begin{equation}\label{condvN0}
\int_{(\T^d)^N} v^N= 0.
\end{equation}

\begin{Proposition} \label{prop.IleqlambdaN}
	We have 
	$$
	I\leq \liminf_{N\to+\infty} \lambda^N. 
	$$
\end{Proposition}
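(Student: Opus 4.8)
The plan is to produce a smooth competitor for the variational problem \eqref{pbI} out of the $N$-particle corrector $v^N$, by ``lifting'' it to $\mathcal P(\T^d)$ through averaging over i.i.d.\ samples. Concretely, I would set
$$
W^N(m):=\int_{(\T^d)^N}v^N(x_1,\dots,x_N)\prod_{i=1}^N m(dx_i),
$$
and aim to show that $W^N\in C^{1,1}(\mathcal P(\T^d))$ and that it is an approximate sub-corrector:
$$
\int_{\T^d}\bigl(\mathcal H(y,D_mW^N(m,y),m)-\dive_y D_mW^N(m,y)\bigr)m(dy)\leq\lambda^N+\omega(N)\qquad\forall m\in\mathcal P(\T^d),
$$
for some $\omega(N)\to0$ independent of $m$. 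Since $W^N$ is then admissible in the infimum defining $I$, this gives $I\leq\lambda^N+\omega(N)$ for all $N$, and the statement follows by letting $N\to+\infty$.

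First I would note that $v^N$ is symmetric in its arguments, since the equation \eqref{vN.critic.eq} and the normalization \eqref{condvN0} are permutation invariant and $v^N$ is unique up to additive constants. Using that $v^N\in C^2((\T^d)^N)$, a telescoping expansion of $m_1^{\otimes N}-m_2^{\otimes N}$ identifies the flat derivative
$$
\frac{\delta W^N}{\delta m}(m,y)=N\int_{(\T^d)^{N-1}}v^N(y,x_2,\dots,x_N)\prod_{i=2}^N m(dx_i)-NW^N(m),
$$
so that $D_mW^N(m,y)=N\int_{(\T^d)^{N-1}}D_{x_1}v^N(y,{\bf x}')\prod_{i=2}^N m(dx_i)$ and $\dive_yD_mW^N(m,y)=N\int_{(\T^d)^{N-1}}\Delta_{x_1}v^N(y,{\bf x}')\prod_{i=2}^N m(dx_i)$; both are continuous in $(m,y)$ because $v^N\in C^2$ and $m\mapsto m^{\otimes(N-1)}$ is weakly continuous, whence $W^N\in C^{1,1}(\mathcal P(\T^d))$.

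The core estimate is a convexity/Jensen argument. Integrating the divergence term against $m$ and using symmetry gives $-\int_{\T^d}\dive_yD_mW^N(m,y)m(dy)=-\int_{(\T^d)^N}\sum_{i=1}^N\Delta_{x_i}v^N({\bf x})\prod_{i=1}^N m(dx_i)$. Since $p\mapsto H(y,p)$ is convex by \eqref{1}, Jensen's inequality applied to the average over the probability measure $\prod_{i\geq2}m(dx_i)$, followed again by symmetry, yields $\int_{\T^d}H(y,D_mW^N(m,y))m(dy)\leq\frac1N\int_{(\T^d)^N}\sum_{i=1}^N H(x_i,ND_{x_i}v^N({\bf x}))\prod_{i=1}^N m(dx_i)$. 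Adding these two and inserting equation \eqref{vN.critic.eq}, rewritten as $-\sum_i\Delta_{x_i}v^N+\frac1N\sum_i H(x_i,ND_{x_i}v^N)=\lambda^N+\mathcal F(m^N_{\bf x})$ (using $\frac1N\sum_i\mathcal F(m^N_{\bf x})=\mathcal F(m^N_{\bf x})$), I obtain
$$
\int_{\T^d}\bigl(\mathcal H(y,D_mW^N(m,y),m)-\dive_yD_mW^N(m,y)\bigr)m(dy)\leq\lambda^N+\int_{(\T^d)^N}\bigl(\mathcal F(m^N_{\bf x})-\mathcal F(m)\bigr)\prod_{i=1}^N m(dx_i).
$$

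Finally, the error term is controlled uniformly in $m$: since $\mathcal F$ is $C^1$ on the compact set $\mathcal P(\T^d)$ it is Lipschitz for $\mathbf d$, so the last integral is bounded in absolute value by $\mathrm{Lip}(\mathcal F)\int_{(\T^d)^N}\mathbf d(m^N_{\bf x},m)\prod_i m(dx_i)$, which is the expected $1$-Wasserstein distance between the empirical measure of $N$ i.i.d.\ points of common law $m$ and $m$ itself; by the quantitative law of large numbers for empirical measures on the compact torus, this is at most some $\omega(N)\to0$ independent of $m$. Taking the supremum over $m$ in the previous display then gives $I\leq\lambda^N+\omega(N)$ for every $N$, hence $I\leq\liminf_N\lambda^N$. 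I expect the main point requiring care to be the derivative computation establishing $W^N\in C^{1,1}(\mathcal P(\T^d))$ together with the exact formulas for $D_mW^N$ and $\dive_yD_mW^N$; the Jensen estimate and the uniform law of large numbers are routine.
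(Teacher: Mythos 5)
Your proposal is correct and follows essentially the same route as the paper's proof: lift the $N$-particle corrector to $W^N$ on $\mathcal P(\T^d)$, use convexity of $\mathcal H$ in $p$ (Jensen) together with symmetry of $v^N$ to obtain a sub-corrector inequality, and control the resulting $\mathcal F(m^N_{\bf x})-\mathcal F(m)$ error uniformly in $m$ via the quantitative Glivenko--Cantelli/law-of-large-numbers estimate for empirical measures on the torus. The only differences are cosmetic (you use symmetry to collapse the sum over $k$ to a single index from the outset), and your argument is sound.
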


\begin{proof}
	We define  
	$$
	W^{N}(m):= \int_{(\T^d)^N} v^N(x_1, \dots, x_N)\prod_{i=1}^N m(dx_i). 
	$$
	As $v^N$ is smooth, it is clear that $W^{N}$ is also smooth on ${\mathcal P}(\T^d)$ and we have
	%$$
	%\frac{\delta W^N}{\delta m}(m, y) = \sum_{k=1}^N \int_{(\T^d)^{N-1}} v^N(x_1, \dots, x_{k-1}, y, x_{k+1}, \dots, x_N)\prod_{i\neq k} m(dx_i), 
	%$$
	$$
	D_m W^N(m, y) = \sum_{k=1}^N \int_{(\T^d)^{N-1}} D_{x_k} v^N(x_1, \dots, x_{k-1}, y, x_{k+1}, \dots, x_N)\prod_{i\neq k} m(dx_i)
	$$
	and
	$$
	\dive_y D_m W^N(m, y) = \sum_{k=1}^N \int_{(\T^d)^{N-1}} \Delta_{x_k} v^N(x_1, \dots, x_{k-1}, y, x_{k+1}, \dots, x_N)\prod_{i\neq k} m(dx_i).
	$$
	In view of the convexity of $\mathcal H$ with respect to $p$, we obtain, for any $m\in \Pw$, 
	\begin{align*}
	& -\int_{\T^d} \dive_y D_m W^N(m, y) m(dy) + \int_{\T^d} \mathcal H(y, D_mW^N(m,y), m)m(dy) \\
	& = - \sum_{k=1}^N \int_{\T^d} \int_{(\T^d)^{N-1}} \Delta_{x_k} v^N(x_1, \dots, x_{k-1}, y, x_{k+1}, \dots, x_N)\prod_{i\neq k} m(dx_i)m(dy) \\
	& \qquad + \int_{\T^d} \mathcal H\Bigl(y, \sum_{k=1}^N \int_{(\T^d)^{N-1}} D_{x_k} v^N(x_1, \dots, x_{k-1}, y, x_{k+1}, \dots, x_N)\prod_{i\neq k} m(dx_i) , m\Bigr)m(dy) \\ 
	%& \leq  - \sum_{k=1}^N\int_{(\T^d)^{N}} \Delta_{x_k} v^N(x_1,  \dots, x_N)\prod_{i} m(dx_i) \\
	%& \qquad + \frac{1}{N} \sum_{k=1}^N \int_{(\T^d)^{N-1}}  \int_{\T^d} \mathcal H\Bigl(y, ND_{x_k} v^N(x_1, \dots, x_{k-1}, y, x_{k+1}, \dots, x_N) , m\Bigr)m(dy)\prod_{i\neq k} m(dx_i) \\ 
	& \leq   \int_{(\T^d)^{N}}  \Bigl( - \sum_{k=1}^N \Delta_{x_k} v^N(x_1,  \dots, x_N) + \frac{1}{N} \sum_{k=1}^N \mathcal H\Bigl(x_k, ND_{x_k} v^N(x_1, \dots, x_N) , m\Bigr)\Bigr) \prod_{i} m(dx_i).
	\end{align*}
	Following \cite{carmona2018probabilistic}, the Glivenko-Cantelli Theorem states that 
	$$
	\int_{(\T^d)^N} \dw(\mu^N_{{\bf x}}, m )  \prod_{i} m(dx_i) \leq \ep_N := \left\{\begin{array}{ll}
	N^{-1/2} & {\rm if }\; d<4,\\ 
	N^{-1/2}\ln(N)& {\rm if }\; d=4,\\ 
	N^{-2/d} &  {\rm otherwise}.
	\end{array}\right.  
	$$
	As $\mathcal H$ has a separate form: $\mathcal H(x,p,m)= H(x,p)-\mathcal F(m)$ where $\mathcal F$ is Lipschitz continuous with respect to $m$, we infer that 
	\begin{align*}
	&  \int_{(\T^d)^{N}} \frac{1}{N} \sum_{k=1}^N \mathcal H\Bigl(x_k, ND_{x_k} v^N(x_1, \dots, x_N) , m\Bigr) \prod_{i} m(dx_i)\\ 
	& \qquad \leq \int_{(\T^d)^{N}} \frac{1}{N} \sum_{k=1}^N \mathcal H\Bigl(x_k, ND_{x_k} v^N(x_1, \dots, x_N) , m^N_{\bf x} \Bigr) \prod_{i} m(dx_i)
	+ C\ep_N. 
	\end{align*}
	Recalling the equation satisfied by $v^N$, we conclude that  
	\begin{align}
	& -\int_{\T^d} \dive_y D_m W^N(m, y) m(dy) + \int_{\T^d} \mathcal H(y, D_mW^N(m,y), m)m(dy) \notag \\
	& \leq  \int_{(\T^d)^{N}}  \Bigl( - \sum_{k=1}^N \Delta_{x_k} v^N(x_1,  \dots, x_N) + \frac{1}{N} \sum_{k=1}^N \mathcal H\Bigl(x_k, ND_{x_k} v^N(x_1, \dots, x_N) , m^N_{\bf x})\Bigr) \prod_{i} m(dx_i) + C\ep_N \notag \\
	& \qquad 
	\leq \lambda^N+  C\ep_N.  \label{eq.WN}
	\end{align}
	As $W^N$ is smooth, this shows that $I\leq \liminf_N \lambda^N$. 
\end{proof}

According to the above proposition and Proposition \ref{prop.lambdaI} we have that $\lambda\leq I\leq \liminf_N \lambda^N$. Therefore, to have that $I=\lambda$ we need to show that $\lim_N\lambda^N=\lambda$. Before proving this result we first need to introduce some estimates $v^N$.
\begin{Lemma}\label{lemma.bernstein.vN}
	There exists a constant $\bar C_1>0$, independent of $N$, such that
	\begin{align}\label{vN.est.conclusion}
	N\sum_{i=1}^N\vert D_{x_i}v^N({\bf x})\vert^2\leq \bar C_1\qquad\forall {\bf x}\in(\T^d)^N.
	\end{align}
\end{Lemma}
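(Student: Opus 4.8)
The plan is to prove the gradient bound \eqref{vN.est.conclusion} by the classical Bernstein method, carried out carefully so that all constants are uniform in $N$. Set $w^N({\bf x}) := N\sum_{i=1}^N |D_{x_i}v^N({\bf x})|^2$; this is a smooth function on the compact manifold $(\T^d)^N$, so it attains its maximum at some point $\bar{\bf x}$. The goal is to bound $w^N(\bar{\bf x})$ by a constant independent of $N$. The first step is to differentiate the equation \eqref{vN.critic.eq} in each direction $x_j$ and contract appropriately: writing $\mathcal H(x_i, N D_{x_i}v^N, m^N_{\bf x}) = H(x_i, N D_{x_i}v^N) - \mathcal F(m^N_{\bf x})$, differentiation produces, for each $i$, a term $N\, D_pH(x_i, ND_{x_i}v^N)\cdot D_{x_i}(D_{x_j}v^N)$, a term $D_xH(x_i, ND_{x_i}v^N)\delta_{ij}$, and (from $\mathcal F$) a term coming from $D_{x_j} m^N_{\bf x}$, which by the normalization \eqref{deriv.conve} and the $C^2$ regularity of $\mathcal F$ contributes an $O(1/N)$ quantity in $x_j$. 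Then one multiplies by the appropriate combination of first derivatives of $v^N$ and sums over $i,j$ to get an elliptic inequality for $w^N$.

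The second step is to evaluate this inequality at the maximum point $\bar{\bf x}$: there $D_{\bf x}w^N(\bar{\bf x}) = 0$ and the full Hessian of $w^N$ is nonpositive, so in particular $\sum_i \Delta_{x_i} w^N(\bar{\bf x})\le 0$ and the first-order terms can be rewritten. The structural assumption \eqref{1}, $\bar C^{-1}I_d \le D_{pp}H \le \bar C I_d$, provides the coercivity needed to absorb the ``good'' second-order terms $\sim N\sum_{i}|D^2_{x_ix_i}v^N|^2$ (and cross terms), while the growth assumptions \eqref{DxH.bounds} and \eqref{2}, namely $|D_xH|\le C(1+|p|)$, $|D_{xx}H|\le C(1+|p|)^{1+\theta}$, $|D_{xp}H|\le C(1+|p|)^\theta$ with $\theta<1$, control the ``bad'' terms: each is estimated by $C(1 + w^N(\bar{\bf x}))^{(1+\theta)/2}$ or similar, a power strictly below the coercive term's order in $w^N$. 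The scaling is exactly arranged so that the $1/N$ and $N$ factors match (this is the point of putting the weight $N$ in the definition of $w^N$ and the factor $N$ in front of $D_{x_i}v^N$ in the equation); one checks that the contribution of the $\mathcal F$ term is harmless because it is $O(1)$ after the sum. Combining, one obtains an inequality of the form $c\, w^N(\bar{\bf x}) \le C(1 + w^N(\bar{\bf x})^{(1+\theta)/2})$ with $c, C > 0$ independent of $N$, and since $\theta<1$ this forces $w^N(\bar{\bf x}) \le \bar C_1$; since $\bar{\bf x}$ is the maximum, \eqref{vN.est.conclusion} follows for all ${\bf x}$.

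The main obstacle, and the reason the nonstandard assumptions \eqref{DxH.bounds}--\eqref{2} are imposed, is the bookkeeping of the $N$-dependence: a naive application of Bernstein's method to a function on $(\T^d)^N$ would produce constants growing with $N$, and one must verify that the chosen weighting makes every term either genuinely $O(1)$ in $N$ or absorbable by the coercive second-order term. This is precisely the step where the argument follows \cite{lions2005homogenization}: the key algebraic identity is that $-\sum_i\Delta_{x_i}w^N$ evaluated at the max, after using the differentiated equation, reduces to a sum of terms each of which scales correctly. A secondary technical point is justifying the differentiation and the maximum-principle argument rigorously given only $C^2$ regularity of $v^N$ and $H\in C^2$ with $D_{pp}H$ Lipschitz; this is handled in the standard way, either by noting $v^N$ is in fact smoother by elliptic regularity (the right-hand side of \eqref{vN.critic.eq} depends on $v^N$ through $D_{x_i}v^N$ and $m^N_{\bf x}$, and bootstrapping gives $C^{2,\alpha}$ and beyond), or by a routine regularization of $H$. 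I would present the computation only at the maximum point, suppressing the full Hessian and keeping only the terms that survive, and point to \cite{lions2005homogenization} for the analogous scaling argument.
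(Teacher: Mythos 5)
Your overall strategy — Bernstein's method on $(\T^d)^N$ with the $N$-weighting chosen so that the scaling closes, and the observation that the $\mathcal F$-term only contributes $O(1/N)$ per coordinate — is the same as the paper's, which indeed follows \cite{lions2005homogenization}. However, there are several deviations worth flagging, one of which is a genuine gap.

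\emph{A missing preliminary step.} Your plan never establishes that the sequence $(\lambda^N)$ is uniformly bounded. This matters: the step that produces the coercive term in $w^N$ is not the differentiated equation but the \emph{undifferentiated} one. From \eqref{vN.critic.eq} and $H(x,p)\geq \bar C^{-1}|p|^2-C$ one gets
$\sum_i\Delta_{x_i}v^N \geq \bar C^{-1} N\sum_i |D_{x_i}v^N|^2 - C - \Vert\mathcal F\Vert_\infty - \lambda^N$,
and then Cauchy--Schwarz relates the left side to $(\sum_{i,j}|D^2_{x_ix_j}v^N|^2)^{1/2}$. If $\lambda^N$ were allowed to grow with $N$, this estimate would not give a bound independent of $N$. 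The paper disposes of this first, by evaluating the equation at a minimum and a maximum of $v^N$ (maximum principle), before touching the Bernstein quantity.

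\emph{Unnecessary hypotheses.} You invoke assumption \eqref{2} on $D_{xx}H$ and $D_{xp}H$. It is not used in this lemma, and the paper explicitly remarks that some of its stated assumptions are inherited from \cite{masoero2019} and are not needed here. The reason: in the Bernstein computation you only differentiate the equation \emph{once} in a direction $x_k$, so only $D_xH$ and $D_pH$ appear; the $D_pH$-term is proportional to $D_{x_j}w$, which vanishes at the maximum of $w$, leaving only the $D_xH$-term, controlled by \eqref{DxH.bounds}. Since \eqref{2} is not used, the $\theta$-dependent exponent $(1+\theta)/2$ in your final inequality does not materialise either; the actual inequality is of the form $A\leq CA^{1/2}+CA^{1/4}+C$ with $A=N\sum_i|D_{x_i}v^N|^2$, which closes just as well. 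Relatedly, the coercivity does not come from ``$D_{pp}H$ absorbing the good second-order terms''; the good term $\sum_{i,j}|D^2_{x_ix_j}v^N|^2$ comes from $-\sum_j\Delta_{x_j}w$, and $D_{pp}H\geq\bar C^{-1}I_d$ enters only through the quadratic lower bound on $H$ in the undifferentiated PDE, as above. I would restate this chain explicitly rather than leave it at the level of ``the scaling is exactly arranged''.
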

\begin{proof}The proof is an application of the Berstein's type estimates used in \cite{lions2005homogenization}.
	First we show the sequence $\lambda^N$ is bounded. This is a direct consequence of the maximum principle. Indeed, if ${\bf y}$ is a minimum point of $v^N$, then 
	\begin{align}\label{}
	\frac{1}{N} \sum_{i=1}^N \mathcal H(x_i, 0, m^N_{\bf y})\geq-\sum_{i=1}^N \Delta_{x_i} v^N({\bf y}) + \frac{1}{N} \sum_{i=1}^N \mathcal H(x_i, 0, m^N_{\bf y})=\lambda^N.
	\end{align}
	As the left hand side uniformly bounded with respect to $N$ we get that $\lambda ^N$ is bounded from above. If instead of the minimum we consider a maximum we get the lower bound.

	From \eqref{vN.critic.eq} and the quadratic growth of the Hamiltonian we have
	\begin{align}\label{}
	&\bar C N\sum_{i=1}^N\vert D_{x_i}v^N({\bf x})\vert^2-\bar C\leq \sum_{i=1}^N \Delta_{x_i} v^N({\bf x})+\Vert\mathcal F \Vert_\infty+\lambda^N\leq \sum_{i,j=1}^N\vert D^2_{x_i,x_j} v^N({\bf x})\vert+\Vert\mathcal F \Vert_\infty+\lambda^N\\
	&\leq N^{\frac{1}{2}}\left(\sum_{i,j=1}^N\vert D^2_{x_i,x_j} v^N({\bf x})\vert^2\right)^{\frac{1}{2}}+\Vert\mathcal F \Vert_\infty+\lambda^N.
	\end{align}
	As $\lambda^N$ is bounded, there exists a constant $C$ such that
	\begin{align}\label{estimate.vNcriteq}
	\bar C N\sum_{i=1}^N\vert D_{x_i}v^N({\bf x})\vert^2\leq N^{\frac{1}{2}}\left(\sum_{i,j=1}^N\vert D^2_{x_i,x_j} v^N({\bf x})\vert^2\right)^{\frac{1}{2}}+C
	\end{align}
	Now we define
	$$
	w({\bf x})=\sum_{i=1}^N \vert D_{x_i}v^N({\bf x})\vert ^2,
	$$
	then
	\begin{align}\label{}
	D_{x_j}w({\bf x})= 2\sum_{i=1}D_{x_i}v^N({\bf x})D_{x_j,x_j}^2v^N({\bf x}).
	\end{align}
	By a direct computation we get
	\begin{align}\label{}
	&-\sum_{i=1}^N\Delta_{x_i}w({\bf x})=-2\sum_{i,j=1}^N\vert D^2_{x_i,x_j}v^N({\bf x})\vert^2+\sum_{i=1}^N D_{x_i}v^N({\bf x}) D_{x_i}\left(\sum_{j=1}^N\Delta_{x_j}v^N({\bf x})\right)\\
	&=-2\sum_{i,j=1}^N\vert D^2_{x_i,x_j}v^N({\bf x})\vert^2+\sum_{i=1}^ND_{x_i}v^N({\bf x})D_{x_i}\left(N^{-1}\left(\sum_{j=1}^N H(x_j,ND_{x_j}v^N({\bf x}))-\mathcal F(m_{\bf x}^N)\right)-\lambda^N\right)
	\end{align}
	\begin{align}
	&=-2\sum_{i,j=1}^N\vert D^2_{x_i,x_j}v^N({\bf x})\vert^2+N^{-1}\sum_{i=1}^ND_{x_i}v^N({\bf x})D_{x}H(x_i,ND_{x_i}v^N({\bf x}))\\
	&\qquad\qquad\qquad+\sum_{i,j=1}^ND_pH(x_j,N\Delta_{x_j}v^N({\bf x}))D^2_{x_i,x_j}v^N({\bf x})D_{x_i}v^N({\bf x})-N^{-1}\sum_{i=1}^ND_xF(m_{x_i}^N,x_i)D_{x_i}v^N({\bf x})\\
	&=-2\sum_{i,j=1}^N\vert D^2_{x_i,x_j}v^N({\bf x})\vert^2+N^{-1}\sum_{i=1}^ND_{x_i}v^N({\bf x})D_{x}H(x_i,ND_{x_i}v^N({\bf x}))\\
	&\qquad\qquad\qquad+\sum_{j=1}^ND_pH(x_j,N\Delta_{x_j}v^N({\bf x}))D_{x_j}w({\bf x})-N^{-1}\sum_{i=1}^ND_xF(m_{\bf x}^N,x_i)D_{x_i}v^N({\bf x})
	\end{align}
	
	If ${\bf x}$ is a maximum point of $w$ the above computations lead to
	\begin{align}\label{}
	2\sum_{i,j=1}^N\vert D^2_{x_i,x_j}v^N({\bf x})\vert^2&\leq N^{-1}\left[\sum_{i=1}^ND_{x_i}v^N({\bf x})D_{x}H(x_i,ND_{x_i}v^N({\bf x}))-\sum_{i=1}^ND_xF(m_{\bf x}^N,x_i)D_{x_i}v^N({\bf x})\right]\\
	&\leq N^{-1}\left[\sum_{i=1}^N\vert D_{x_i}v^N({\bf x})\vert\vert D_{x}H(x_i,ND_{x_i}v^N({\bf x}))\vert+\Vert D_xF\Vert_\infty\sum_{i=1}^N\vert D_{x_i}v^N({\bf x})\vert\right]\\
	&\leq N^{-1}\left[CN\sum_{i=1}^N\vert D_{x_i}v^N({\bf x})\vert^2+(\Vert D_xF\Vert_\infty+C)\sum_{i=1}^N\vert D_{x_i}v^N({\bf x})\vert\right],
	\end{align}
	where for the last line we used the bounds on $D_x H$ \eqref{DxH.bounds}.
	
	Plugging the above inequality into \eqref{estimate.vNcriteq}, as $D_xF$ is bounded, we get (for a possible different constant $C$, independent of $N$, that might change from line to line)
	\begin{align}\label{}
	N\sum_{i=1}^N\vert D_{x_i}v^N({\bf x})\vert^2&\leq C\left[N\sum_{i=1}^N\vert D_{x_i}v^N({\bf x})\vert^2+\sum_{i=1}^N\vert D_{x_i}v^N({\bf x})\vert\right]^{\frac{1}{2}}+C\\
	&\leq C \left(N\sum_{i=1}^N\vert D_{x_i}v^N({\bf x})\vert^2\right)^{\frac{1}{2}}+C \left(N\sum_{i=1}^N\vert D_{x_i}v^N({\bf x})\vert^2\right)^{\frac{1}{4}}+C.
	\end{align}
	The above inequalities ensure that there exists a $\bar C_1>0$, independent of $N$, such that 
	\begin{align}\label{}
	N\sum_{i=1}^N\vert D_{x_i}v^N({\bf x})\vert^2\leq \bar C_1.
	\end{align}
	
\end{proof}

\begin{Lemma}\label{lemma.vN.conv.V}
	Let $v^N$ be the solution to \eqref{vN.critic.eq} satisfying condition \eqref{condvN0}.
	There exists a Lipschitz continuous map $V:\mathcal P(\T^d)\rightarrow\R$ and a subsequence $\{N_k\}_k$ such that 
	\begin{align}\label{}
	\lim_{k\rightarrow+\infty}\sup_{{\bf x}\in(\T^d)^{N_k}}\vert v^{N_k}({\bf x})-V(m^{N_k}_{\bf x})\vert=0
	\end{align}
\end{Lemma}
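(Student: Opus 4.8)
The plan is to promote the coordinate-wise gradient bound of Lemma~\ref{lemma.bernstein.vN} into a genuine modulus of continuity for $v^N$, viewed as a function of the empirical measure $m^N_{\bf x}$ alone and with a constant independent of $N$; a soft compactness argument then produces the limit $V$, and the Glivenko--Cantelli estimate already used in Proposition~\ref{prop.IleqlambdaN} bridges $v^N$ and $V\circ(m^N_\cdot)$. I would begin by noting that $v^N$ is symmetric: the operator in \eqref{vN.critic.eq} commutes with permutations of $(x_1,\dots,x_N)$, so for every permutation $\tau$ the function $v^N\circ\tau$ again solves \eqref{vN.critic.eq} and satisfies \eqref{condvN0}; by uniqueness of the corrector up to additive constants, $v^N=v^N\circ\tau$. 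Hence $v^N({\bf x})$ depends on ${\bf x}$ only through $m^N_{\bf x}$.

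The core step is the estimate
$$
|v^N({\bf x})-v^N({\bf x}')|\;\le\;\sqrt{\bar C_1}\,\dw\big(m^N_{\bf x},m^N_{{\bf x}'}\big),\qquad\forall\,{\bf x},{\bf x}'\in(\T^d)^N,
$$
uniform in $N$. To obtain it, pick a permutation $\sigma$ realising the $2$-Wasserstein distance between the (equally weighted) empirical measures, i.e.\ with $\frac1N\sum_i|x_i-x'_{\sigma(i)}|^2=\dw(m^N_{\bf x},m^N_{{\bf x}'})^2$ — optimal transport between uniform empirical measures of the same size is a permutation, by Birkhoff — and use the symmetry of $v^N$ to assume $\sigma=\mathrm{id}$. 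Along the constant-speed geodesic $t\mapsto{\bf x}(t)$ in $(\T^d)^N$ from ${\bf x}$ to ${\bf x}'$, the chain rule and Cauchy--Schwarz in $i$ give
$$
|v^N({\bf x})-v^N({\bf x}')|\;\le\;\int_0^1\Big(\sum_{i=1}^N|D_{x_i}v^N({\bf x}(t))|^2\Big)^{1/2}\Big(\sum_{i=1}^N|x_i-x'_i|^2\Big)^{1/2}\,dt\;\le\;\sqrt{\bar C_1}\,\dw\big(m^N_{\bf x},m^N_{{\bf x}'}\big),
$$
where Lemma~\ref{lemma.bernstein.vN} is used in the form $\sum_i|D_{x_i}v^N|^2\le\bar C_1/N$ and the second factor equals $\sqrt N\,\dw(m^N_{\bf x},m^N_{{\bf x}'})$; the exact scaling $N\sum_i|D_{x_i}v^N|^2\le\bar C_1$ is precisely what keeps the constant $N$-free. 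Since $\int_{(\T^d)^N}v^N=0$, $v^N$ vanishes at some point, so the same bound gives $\|v^N\|_\infty\le C_0$ for a constant $C_0$ depending only on $\bar C_1$ and $d$.

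It remains to pass to the limit through $W^N$ of Proposition~\ref{prop.IleqlambdaN}. Tensorising a $\dw$-optimal coupling of $m$ and $m'$ and combining the core estimate with $\dw(m^N_{\bf X},m^N_{\bf Y})^2\le\frac1N\sum_i|X_i-Y_i|^2$ and Jensen, one gets $|W^N(m)-W^N(m')|\le\sqrt{\bar C_1}\,\dw(m,m')$, while $\|W^N\|_\infty\le C_0$. As $\dw$ and $\mathbf d$ induce the same (compact) topology on $\mathcal P(\T^d)$, the family $\{W^N\}$ is bounded and equicontinuous; Arzel\`a--Ascoli provides a subsequence $\{N_k\}$ and $V\in C(\mathcal P(\T^d))$, Lipschitz for $\dw$ with constant $\sqrt{\bar C_1}$, such that $W^{N_k}\to V$ uniformly. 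Finally, for every ${\bf x}\in(\T^d)^N$ the core estimate and the Glivenko--Cantelli bound applied with $m=m^N_{\bf x}$ give $|v^N({\bf x})-W^N(m^N_{\bf x})|\le\int_{(\T^d)^N}|v^N({\bf x})-v^N({\bf y})|\prod_{i=1}^N m^N_{\bf x}(dy_i)\le\sqrt{\bar C_1}\,\ep_N$, whence
$$
\sup_{{\bf x}\in(\T^d)^{N_k}}\big|v^{N_k}({\bf x})-V(m^{N_k}_{\bf x})\big|\;\le\;\sqrt{\bar C_1}\,\ep_{N_k}+\|W^{N_k}-V\|_\infty\;\longrightarrow\;0.
$$

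The main obstacle is the core estimate of the second paragraph: converting a bound that lives on $(\T^d)^N$ into one that sees only $m^N_{\bf x}$, uniformly in $N$. The two ingredients that make it work are the permutation symmetry — which allows aligning two configurations through a permutation, the optimal coupling between uniform empirical measures — and the exact $N$-scaling of the Bernstein estimate, matched to the $\ell^2$-cost of that alignment. Once this is in hand, symmetry, compactness and Glivenko--Cantelli are routine.
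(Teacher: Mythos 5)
Your proof is correct but takes a genuinely different route from the paper's. The paper's proof is short: after establishing symmetry of $v^N$ (by uniqueness of the solution to \eqref{vN.critic.eq} up to constants), uniform boundedness, and a dimension-free modulus of continuity in the empirical measure (using \eqref{vN.est.conclusion} exactly as you do), it invokes a compactness theorem of Lions (\cite[Theorem 2.1]{cardaliaguet2010notes}) as a black box to produce $V$ and the uniform convergence. You instead give a self-contained construction: you recycle the tensorized extension $W^N(m)=\int v^N\,dm^{\otimes N}$ already used in Proposition \ref{prop.IleqlambdaN}, prove it is uniformly $\dw$-Lipschitz by tensorizing an optimal coupling of $(m,m')$ and applying Jensen, extract a uniform limit $V$ via Arzel\`a--Ascoli on the compact space $\Pw$, and then bridge $v^N$ to $W^N\circ(m^N_\cdot)$ through the Glivenko--Cantelli rate $\ep_N$. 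This is in effect a direct proof of the special case of Lions' compactness theorem needed here, and it fits the paper well since both $W^N$ and $\ep_N$ are already in play from Section \ref{sec.Nprob}. One point in your favour: the paper's inequality chain ends with the step $\dw(m^N_{\bf x},m^N_{\bf y})\le C\,\mathbf d(m^N_{\bf x},m^N_{\bf y})$, which does not hold in general (on a bounded domain one only has $\dw\le C\,\mathbf d^{1/2}$, so what is actually proved there is a H\"older-$1/2$ modulus in $\mathbf d$); you sidestep this entirely by working with $\dw$ throughout and noting that $\dw$ and $\mathbf d$ induce the same compact topology on $\Pw$, which is all that Arzel\`a--Ascoli requires.
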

\begin{proof}
	The proof is a direct application of a compactness result due to Lions (see \cite[Theorem 2.1]{cardaliaguet2010notes}), for which we just need to check the hypotheses. Let us note that, by uniqueness (up to a constant) of the solution to \eqref{vN.critic.eq}, $v^N$ is symmetrical. To apply \cite[Theorem 2.1]{cardaliaguet2010notes} we have to prove that $v^{N}$ is uniformly bounded and that there exists a constant $C$, independent of $N$, such that, for any ${\bf x},{\bf y}\in(\T^d)^N$
	\begin{align}\label{lerhbrsdxc}
	\vert v^N({\bf x})-v^N({\bf y})\vert\leq C {\bf d}(m^N_{\bf x},m^N_{{\bf y}}).
	\end{align}
	
	 The fact that \eqref{lerhbrsdxc} and the normalization condition \eqref{condvN0} hold implies that the $v^N$ are uniformly bounded. For proving \eqref{lerhbrsdxc}, the second hypothesis we fix two points ${\bf x}=(x_1,\cdots,x_N)\in(\T^d)^N$ and ${\bf y}=(y_1,\cdots,y_N)\in(\T^d)^N$. As $v^N$ is symmetrical, we can assume without loss of generality that ${\bf d}_2(m^N_{\bf x},m^N_{{\bf y}})= (N^{-1}\sum_{i=1}^N {\bf d}^2_{\T^d}(x_i,y_i))^{1/2}$, where ${\bf d}_{\T^d}$ is the distance on ${\T^d}$. Then, using again \eqref{vN.est.conclusion}, we have
	\begin{align}\label{}
	\vert v^N({\bf x})-v^N({\bf y})\vert&\leq\sup_{{\bf z}\in(\T^d)^N}\sum_{i=1}^N\vert D_{x_i}v^N({\bf z})\vert{\bf d}_{\T^d}(x_i,y_i)\\
	&\leq\sup_{{\bf z}\in(\T^d)^N}\left(\sum_{i=1}^N\vert D_{x_i}v^N({\bf z})\vert^2\right)^\frac{1}{2}\left(\sum_{i=1}^N{\bf d}^2_{\T^d}(x_i,y_i)\right)^\frac{1}{2}\\
	&\leq\bar C_1^\frac{1}{2}N^{-\frac{1}{2}}\left(\sum_{i=1}^N{\bf d}_{\T^d}^2(x_i,y_i)\right)^\frac{1}{2}=\bar C_1^\frac{1}{2}{\bf d}_2(m^N_{\bf x},m^N_{{\bf y}})\leq C{\bf d}(m^N_{\bf x},m^N_{{\bf y}}).
	\end{align}
	
\end{proof}

We now adapt the argument of \cite{lacker2017limit} to prove that the function $V$ is a corrector.
\begin{Proposition}\label{prop.lacker}
	Let $\{\lambda^{N_k}\}_k$ be the subsequence such that $(v^{N_k})$ converges to $V$ as in Lemma \ref{lemma.vN.conv.V} and let $\lambda^*$ be an accumulation point of $\{\lambda^{N_k}\}_k$. Then, for any $T>0$ and any $m_0\in\mathcal P(\T^d)$, the function $V:\mathcal P(\T^d)\rightarrow\R$ verifies
	\begin{align}\label{}
	V(m_{0})= \inf_{(m,\alpha)}\left( \int_{0}^{T}\int_{\T^d}H^{*}(x,\alpha)dm(s)+\mathcal F(m(s))ds+V (m(T))\right)+\lambda^* T,
	\end{align}
	where $(m,\alpha)$ solves in the sense of distributions $-\partial_{t}m+\Delta m+{\rm div}(m\alpha)=0$ with initial condition $m_{0}$.
	
	Consequently, $\lambda^*=\lambda$ and $\lambda^N\rightarrow\lambda$.
\end{Proposition}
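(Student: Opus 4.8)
The plan is to read $v^N$ as the value of an $N$-particle stochastic control problem and to pass to the limit $N\to\infty$ in its dynamic programming principle, adapting the compactness scheme of \cite{lacker2017limit}. Since $v^N\in C^2$ and $D_{x_i}v^N$ is bounded (Lemma \ref{lemma.bernstein.vN}), a verification argument identifies \eqref{vN.critic.eq} as the ergodic Hamilton--Jacobi equation of the control problem on $(\T^d)^N$ with dynamics $dX^i_s=-\alpha^i_s\,ds+\sqrt{2}\,dB^i_s$ ($B^i$ independent Brownian motions) and running cost $\tfrac1N\sum_iH^*(X^i_s,\alpha^i_s)+\mathcal F(m^N_{{\bf X}_s})$. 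Indeed, applying It\^o's formula to $v^N({\bf X}_s)$, using the pointwise Fenchel inequality $D_{x_i}v^N\cdot\alpha^i-\tfrac1NH^*(x_i,\alpha^i)\le\tfrac1NH(x_i,ND_{x_i}v^N)$ (an equality at $\alpha^i=D_pH(x_i,ND_{x_i}v^N)$) and then \eqref{vN.critic.eq}, one gets, for every $T>0$ and ${\bf x}\in(\T^d)^N$,
$$
v^N({\bf x})=\inf_{\alpha}\,\mathbb E\Bigl[\int_0^T\Bigl(\tfrac1N\sum_{i=1}^NH^*(X^i_s,\alpha^i_s)+\mathcal F(m^N_{{\bf X}_s})\Bigr)ds+v^N({\bf X}_T)\Bigr]+\lambda^NT .
$$

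\textbf{Upper bound.} Fix $(m,\alpha)$ admissible for the mean-field problem on $[0,T]$ with $m(0)=m_0$, and realize $m$ as the flow of time-marginals of a process $X$ with drift $-\alpha$ and $X_0\sim m_0$ (superposition principle). Take $N$ independent copies $X^i$ of $X$ and use $\alpha^i_s=\alpha(s,X^i_s)$ as a competitor in the identity above at ${\bf x}={\bf X}_0$, averaging over the initial configuration. Since $\mathbb E[\tfrac1N\sum_iH^*(X^i_s,\alpha^i_s)]=\int_{\T^d}H^*(x,\alpha(s,x))\,m(s)(dx)$ for every $N$, while $\mathbb E[\mathcal F(m^N_{{\bf X}_s})]\to\mathcal F(m(s))$ and $\mathbb E[v^N({\bf X}_0)],\,\mathbb E[v^N({\bf X}_T)]$ converge to $V(m_0),\,V(m(T))$ by the law of large numbers together with $\sup_{\bf x}|v^N({\bf x})-V(m^N_{\bf x})|\to0$ and the continuity of $V$ (Lemma \ref{lemma.vN.conv.V}), letting $N=N_k\to\infty$ (so that $\lambda^{N_k}\to\lambda^*$) gives
$$
V(m_0)\le\int_0^T\!\!\int_{\T^d}H^*(x,\alpha)\,dm(s)+\mathcal F(m(s))\,ds+V(m(T))+\lambda^*T ;
$$
taking the infimum over $(m,\alpha)$ yields one inequality.

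\textbf{Lower bound (the hard part).} For each $k$ choose ${\bf x}^{N_k}$ with $m^{N_k}_{{\bf x}^{N_k}}\to m_0$ and an $\varepsilon$-optimal control $\alpha^{N_k}$ in the identity above, with trajectory ${\bf X}^{N_k}$; set $\mu^{N_k}_s:=m^{N_k}_{{\bf X}^{N_k}_s}$ and $\nu^{N_k}_s:=\tfrac1{N_k}\sum_i\delta_{(X^{N_k,i}_s,\,\alpha^{N_k,i}_s)}$. Boundedness of $\lambda^N$ and the quadratic growth of $H^*$ give a uniform $L^2$ bound on $\alpha^{N_k}$, hence tightness of $(\mu^{N_k},\nu^{N_k})$; along a further subsequence these converge in law to some $(m,\nu)$, and applying It\^o to $\tfrac1{N_k}\sum_i\phi(X^{N_k,i}_s)$ for $\phi\in C^2(\T^d)$, the martingale part has quadratic variation $O(1/N_k)$ and vanishes, so $m$ solves the Fokker--Planck equation with the drift obtained from the disintegration of $\nu$; by convexity of $H^*$ in the control variable (Jensen) this drift may be replaced by a genuine $\bar\alpha\in L^2_m$ without increasing the cost, so that, for a.e.\ realization, $(m,\bar\alpha)$ is admissible for the mean-field problem with $m(0)=m_0$. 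Combining the ensuing lower semicontinuity of the running cost with $\mathbb E[v^{N_k}({\bf X}^{N_k}_T)]\to\mathbb E[V(m(T))]$, $v^{N_k}({\bf x}^{N_k})\to V(m_0)$, the $\varepsilon$-optimality, and finally $\varepsilon\to0$, we obtain
$$
V(m_0)\ge\inf_{(m,\alpha)}\Bigl(\int_0^T\!\!\int_{\T^d}H^*(x,\alpha)\,dm(s)+\mathcal F(m(s))\,ds+V(m(T))\Bigr)+\lambda^*T .
$$
This is where the difficulty concentrates: tightness of the controlled empirical measures, vanishing of the martingale term, the identification of the limit as an \emph{admissible} pair for the mean-field problem, and the lower semicontinuity of the cost all have to be transported from \cite{lacker2017limit} to the present diffusive setting on $\T^d$.

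\textbf{Conclusion.} The two bounds give the dynamic programming identity for $V$ with constant $\lambda^*$, for every $T>0$; since $V$ is continuous, Proposition \ref{remlip} forces $\lambda^*=\lambda$. Finally $(\lambda^N)$ is bounded (Lemma \ref{lemma.bernstein.vN}), so any of its subsequences admits a further subsequence along which $\lambda^N$ converges and, as in Lemma \ref{lemma.vN.conv.V}, $v^N$ converges uniformly to some Lipschitz function on $\mathcal P(\T^d)$; the argument above then identifies that limit as $\lambda$, whence $\lambda^N\to\lambda$.
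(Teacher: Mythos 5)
Your proposal is correct and follows the same essential strategy as the paper: interpret $v^N$ as the value of an $N$-particle ergodic stochastic control problem, write the corresponding dynamic programming identity with constant $\lambda^N$, pass to the McKean--Vlasov limit via the machinery of Lacker \cite{lacker2017limit}, and then combine Proposition \ref{remlip} with the boundedness of $\lambda^N$ (established in the proof of Lemma \ref{lemma.bernstein.vN}) to conclude $\lambda^*=\lambda$ and $\lambda^N\to\lambda$. The place where you take a genuinely different route is in \emph{how} Lacker enters. The paper lifts the whole construction to $\R^d$: it treats the data as $\Z^d$-periodic, takes i.i.d.\ initial conditions $\xi^i\sim m_0$, applies Lacker's Corollary~2.13 and the value-convergence estimate from the proof of his Theorem~2.11 verbatim, and then pays for this detour by separately proving that the $\R^d$ McKean--Vlasov value $J_{\R^d}$ agrees with the torus value $J_{\T^d}$, via mollification of a near-optimal pair $(\tilde m,\tilde w)$ and periodicity of the mollified drift. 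You instead stay on $\T^d$ throughout, which avoids that translation but means the tightness of the controlled empirical measures, the vanishing of the martingale term, the Jensen step replacing the relaxed disintegration by a genuine $L^2_m$ drift, and the lower semicontinuity of the cost all have to be re-derived on the torus rather than cited --- exactly the difficulty you flag. Neither route is more general; yours trades the $\R^d\!\leftrightarrow\!\T^d$ equivalence argument for reproving Lacker's compactness and identification steps. If you develop the sketch, two points want care: in the upper bound the feedback $\alpha(s,\cdot)$ coming from an admissible $(m,\alpha)$ need not be bounded or continuous, so both the superposition/i.i.d.\ coupling step and the law-of-large-numbers estimate for $\tfrac1N\sum_i H^*(X^i_s,\alpha(s,X^i_s))$ require a truncation or approximation; and in the lower bound the uniform $L^2$ bound on the $\varepsilon$-optimal controls should be deduced explicitly from the coercivity of $H^*$ (via \eqref{1}) together with the uniform bounds on $v^N$ and $\lambda^N$.
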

\begin{proof} The Proposition is a consequence of a nice result due to Lacker \cite{lacker2017limit}, which roughly says that the mean field limit of an optimal stochastic control problem involving symmetric controllers is an  optimal control problem of a McKean-Vlasov equation. As the adaptation to our framework, if relatively easy, requires cumbersome details, we refer the interested reader to the appendix.

The fact that $\lambda^*=\lambda$ is a consequence of the uniqueness of the ergodic constant as stated in Proposition \ref{remlip}. As any accumulation point of the bounded sequence $\lambda^N$ is equal to $\lambda$, we finally conclude that the whole sequence converges to $\lambda$.
\end{proof}

As we mentioned before, the immediate consequence of the above proposition is the following result.
\begin{Proposition}\label{prop.I=lambda}
	We have
	$$
	I=\lambda=\lim_{N\rightarrow+\infty}\lambda^N.
	$$
\end{Proposition}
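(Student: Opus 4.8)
The proof is essentially an immediate assembly of the results proved just above, so my plan is simply to collect the three pieces and chain the (in)equalities together. First I would recall that Proposition~\ref{prop.lambdaI} gives $\lambda\leq I$, and that Proposition~\ref{prop.IleqlambdaN} gives $I\leq\liminf_{N\to+\infty}\lambda^N$. These two together already pin $I$ between $\lambda$ and $\liminf_N\lambda^N$, so the whole statement reduces to showing $\lim_N\lambda^N=\lambda$ (in particular that the limit exists).

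For the convergence $\lambda^N\to\lambda$ I would invoke the machinery developed in Lemmas~\ref{lemma.bernstein.vN} and~\ref{lemma.vN.conv.V} and in Proposition~\ref{prop.lacker}. The Bernstein-type estimate of Lemma~\ref{lemma.bernstein.vN}, namely $N\sum_i|D_{x_i}v^N(\mathbf x)|^2\leq\bar C_1$, feeds into Lemma~\ref{lemma.vN.conv.V}, which (via Lions' compactness result) produces a Lipschitz map $V$ on $\mathcal P(\T^d)$ and a subsequence $N_k$ with $v^{N_k}(\mathbf x)-V(m^{N_k}_{\mathbf x})\to0$ uniformly. Along a further subsequence the bounded numbers $\lambda^{N_k}$ converge to some accumulation point $\lambda^*$, and Proposition~\ref{prop.lacker} shows $V$ satisfies the corrector dynamic programming principle \eqref{dynchi} with constant $\lambda^*$. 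Then the uniqueness part of Proposition~\ref{remlip} forces $\lambda^*=\lambda$. Since the sequence $(\lambda^N)$ is bounded (shown in the proof of Lemma~\ref{lemma.bernstein.vN} by the maximum principle) and every accumulation point equals $\lambda$, the whole sequence converges: $\lambda^N\to\lambda$.

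Combining, we get $\lambda\leq I\leq\liminf_N\lambda^N=\lim_N\lambda^N=\lambda$, so all inequalities are equalities and $I=\lambda=\lim_{N\to+\infty}\lambda^N$, which is exactly the claim. I do not foresee any genuine obstacle here — the Proposition is a corollary, and all the work has already been done in the preceding results; the only thing to be careful about is the logical order, i.e. first reducing to $\lambda^N\to\lambda$, then establishing that convergence through a subsequence-plus-uniqueness argument rather than assuming the limit exists a priori. If I wanted to write it out in full it would be three or four lines quoting Propositions~\ref{prop.lambdaI}, \ref{prop.IleqlambdaN}, \ref{remlip} and \ref{prop.lacker}.
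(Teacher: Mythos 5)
Your proposal is correct and follows exactly the paper's route: the paper itself states the proposition is an ``immediate consequence'' of Propositions \ref{prop.lambdaI}, \ref{prop.IleqlambdaN} and \ref{prop.lacker}, and you chain them in the same way, squeezing $\lambda\leq I\leq\liminf_N\lambda^N$ and then using the accumulation-point-plus-uniqueness argument to get $\lambda^N\to\lambda$. The only minor redundancy is that you partially re-derive the conclusion $\lambda^N\to\lambda$, which is already stated as the final sentence of Proposition \ref{prop.lacker}; otherwise this matches the paper's proof.
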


\section{On the support of the Mather measures}\label{sec.Mather}
In this section we take a closer look at Mather measures and the properties of their support points.
\begin{Definition} \label{def.mup1smooth} We say that the closed measure $(\mu, p_1)$ is smooth if there exists a constant $C>0$ such that, for $\mu-$a.e. $m\in \mPT$, $m$ has a positive density and 
$$
\|D\ln(m)\|_{L^2_m(\T^d)}\leq C. 
$$
\end{Definition}

The aim of this section is to prove the following property of smooth Mather measures. 

\begin{Proposition}\label{eq.lem.mup1} Let $(\mu,p_1)$ be a smooth Mather measure. Let us set 
$$
q_1(x,m):= D_a\mathcal H^*\left(y,  \frac{dp_1}{d m\otimes \mu}(y,m),m\right). 
$$
Then we have, for $\mu-$a.e. $m\in \mPT$, 
$$
\int_{\T^d} q_1(y,m) \cdot Dm(y)dy +   \int_{ \T^d} \mathcal H(y, q_1(y,m), m)m(dy)=\lambda.
$$
\end{Proposition}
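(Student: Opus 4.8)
The plan is to deduce the identity from the minimality of $(\mu,p_1)$, using two extra ingredients: an integration by parts against the ``smooth'' measures $m$ in the support of $\mu$, and the uniform convexity of $\mathcal H$, which here plays the role that mollification plays in the classical convergence proof. Write $a(y,m):=\frac{dp_1}{dm\otimes\mu}(y,m)$; as in the proof of Proposition \ref{prop.dual} the growth of $\mathcal H$ gives $a\in L^2$ for the measure $m(dy)\mu(dm)$ on $\mPT\times\T^d$, and since $\mathcal H^*(y,\cdot,m)=H^*(y,\cdot)+\mathcal F(m)$ one has $q_1(y,m)=D_aH^*(y,a(y,m))$, which by Legendre duality between $H$ and $H^*$ is equivalent to $a(y,m)=D_p\mathcal H(y,q_1(y,m),m)$, to the Fenchel equality
$$
\mathcal H^*(y,a(y,m),m)=a(y,m)\cdot q_1(y,m)-\mathcal H(y,q_1(y,m),m),
$$
and (since $|q_1|\le C(1+|a|)$) to $q_1\in L^2(m(dy)\mu(dm))$. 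As $(\mu,p_1)$ is a Mather measure and $I=\lambda$ (Proposition \ref{prop.I=lambda}), we have $\int_{\mPT}\int_{\T^d}\mathcal H^*(y,a(y,m),m)\,m(dy)\mu(dm)=-\lambda$. For $\mu$-a.e.\ $m$ the smoothness of $(\mu,p_1)$ gives that $m$ has a positive density with $\ln m$ weakly differentiable, hence $Dm=(D\ln m)\,m$ and $m\in W^{1,1}(\T^d)$ (because $\int|Dm|=\int|D\ln m|m\le\|D\ln m\|_{L^2_m}\le C$); so the quantity to be computed equals $\ell(m):=\int_{\T^d}\bigl(q_1(y,m)\cdot D\ln m(y)+\mathcal H(y,q_1(y,m),m)\bigr)m(dy)$. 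I will show that $\int_{\mPT}\ell\,\mu=\lambda$ and that $\ell\le\lambda$ $\mu$-a.e., which forces $\ell=\lambda$ $\mu$-a.e.

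The first tool is an integration-by-parts version of the closedness relation. For $\Phi\in C^{1,1}(\mPT)$ the maps $D_m\Phi$ and $\dive_yD_m\Phi$ are continuous, hence bounded, on $\mPT\times\T^d$; and for $\mu$-a.e.\ $m$ (where $m\in W^{1,1}$) one has $\int_{\T^d}\dive_yD_m\Phi(m,y)\,m(dy)=-\int_{\T^d}D_m\Phi(m,y)\cdot D\ln m(y)\,m(dy)$. Substituting this into \eqref{ehkjrbndkfgc} and disintegrating $p_1=a\,m(dy)\mu(dm)$ yields, for every $\Phi\in C^{1,1}(\mPT)$,
$$
\int_{\mPT}\int_{\T^d}D_m\Phi(m,y)\cdot\bigl(a(y,m)+D\ln m(y)\bigr)m(dy)\mu(dm)=0,
$$
all integrals converging absolutely since $\int_{\mPT}\|D\ln m\|_{L^2_m}\mu(dm)\le C$. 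The second, and central, tool is the following convergence: since $I=\lambda$, for every $\varepsilon>0$ choose $\Phi_\varepsilon\in C^{1,1}(\mPT)$ with $\int_{\T^d}\bigl(\mathcal H(y,D_m\Phi_\varepsilon(m,y),m)-\dive_yD_m\Phi_\varepsilon(m,y)\bigr)m(dy)\le\lambda+\varepsilon$ for all $m$; then $D_m\Phi_\varepsilon\to q_1$ strongly in $L^2\bigl(\mPT\times\T^d,m(dy)\mu(dm)\bigr)$ as $\varepsilon\to0$. Indeed, integrate that inequality in $\mu$, use the closedness of $(\mu,p_1)$ with $\Phi=\Phi_\varepsilon$ to replace $\int_{\mPT}\int_{\T^d}\dive_yD_m\Phi_\varepsilon\,m(dy)\mu(dm)$ by $\int_{\mPT}\int_{\T^d}D_m\Phi_\varepsilon\cdot a\,m(dy)\mu(dm)$, and then invoke $D_{pp}\mathcal H\ge\bar C^{-1}I_d$ together with $a=D_p\mathcal H(y,q_1,m)$ and the Fenchel equality in the form $\mathcal H(y,p,m)-a\cdot p\ge-\mathcal H^*(y,a,m)+\tfrac1{2\bar C}|p-q_1|^2$; using $\int_{\mPT}\int_{\T^d}\mathcal H^*(y,a,m)m(dy)\mu(dm)=-\lambda$ one obtains $\tfrac1{2\bar C}\int_{\mPT}\int_{\T^d}|D_m\Phi_\varepsilon-q_1|^2\,m(dy)\mu(dm)\le\varepsilon$. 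This step, where uniform convexity does the work of mollification, is the main obstacle; the rest is bookkeeping with Fenchel's inequality.

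With these two facts the conclusion follows. First, applying the displayed closedness identity with $\Phi=\Phi_\varepsilon$ and letting $\varepsilon\to0$ (the pairing with $a+D\ln m\in L^2(m(dy)\mu(dm))$ passes to the limit) gives $\int_{\mPT}\int_{\T^d}q_1\cdot(a+D\ln m)\,m(dy)\mu(dm)=0$; combined with the Fenchel equality and $\int_{\mPT}\int_{\T^d}\mathcal H^*(y,a,m)m(dy)\mu(dm)=-\lambda$ this is precisely $\int_{\mPT}\ell\,\mu=\lambda$. Second, for the pointwise bound, extract $\varepsilon_k\to0$ such that $D_m\Phi_{\varepsilon_k}(m,\cdot)\to q_1(\cdot,m)$ in $L^2_m(\T^d)$ for $\mu$-a.e.\ $m$ (possible from the $L^2(m(dy)\mu(dm))$-convergence by disintegration). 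Fix such an $m$, also smooth in the sense of Definition \ref{def.mup1smooth} and with $a(\cdot,m)\in L^2_m$, take $\Phi=\Phi_{\varepsilon_k}$ in the near-optimality inequality at this $m$, integrate the divergence term by parts, and let $k\to\infty$: the term $\int_{\T^d}D_m\Phi_{\varepsilon_k}\cdot D\ln m\,m$ converges to $\int_{\T^d}q_1\cdot D\ln m\,m$ by Cauchy--Schwarz (as $\|D\ln m\|_{L^2_m}\le C$), while $\int_{\T^d}\mathcal H(y,D_m\Phi_{\varepsilon_k},m)m\to\int_{\T^d}\mathcal H(y,q_1,m)m$ because $|D_pH(y,p)|\le C(1+|p|)$ and $\|D_m\Phi_{\varepsilon_k}(m,\cdot)\|_{L^2_m}$ stays bounded. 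This gives $\ell(m)\le\lambda$ for $\mu$-a.e.\ $m$, and together with $\int_{\mPT}\ell\,\mu=\lambda$ we conclude $\ell(m)=\lambda$ for $\mu$-a.e.\ $m$, which is the assertion (with $q_1=q_1(y,m)$, the variable $x$ in the statement being a typo for $y$).
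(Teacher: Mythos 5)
Your proof is correct and follows essentially the same strategy as the paper's (which routes through an auxiliary Lemma, \ref{lem.cvDmPhi}): choose a near-minimizing sequence $\Phi_\varepsilon$ for $I$, show $D_m\Phi_\varepsilon\to q_1$ strongly in $L^2(m(dy)\mu(dm))$ via uniform convexity, use the smoothness of $\mu$ to integrate the divergence term by parts, obtain the identity in $\mu$-average, then use a pointwise inequality for a $\mu$-a.e.\ convergent subsequence. The only organizational differences are minor: you apply strong convexity of $\mathcal H$ directly at $q_1$ to get $\|D_m\Phi_\varepsilon - q_1\|_{L^2}^2\le 2\bar C\varepsilon$ in one stroke, whereas the paper's Lemma \ref{lem.cvDmPhi} first establishes $D_p\mathcal H(\cdot,D_m\Phi^N,\cdot)\to\frac{dp_1}{dm\otimes\mu}$ and then invokes invertibility of $D_p\mathcal H$; and you recover the $\mu$-averaged identity by plugging $\Phi_\varepsilon$ into the integrated-by-parts closedness relation and passing to the limit, rather than via the separate limit statement \eqref{ekzjrsd}. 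Your handling of the pointwise bound (extraction of a subsequence with $D_m\Phi_{\varepsilon_k}(m,\cdot)\to q_1(\cdot,m)$ in $L^2_m$ for $\mu$-a.e.\ $m$, then Cauchy--Schwarz for both the transport and Hamiltonian terms) is in fact a bit more explicit than the paper's terse ``extracting a subsequence, $D_m\Phi^N$ converges $\mu$-a.e.\ to $q_1$''.
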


In order to prove the proposition, let us start with a preliminary step. Let $(\mu, p_1)$ be optimal in problem \eqref{dualpb} (where we recall that $I=\lambda$) and let $\Phi^N$ be any minimizing sequence in \eqref{pbI}. 

\begin{Lemma}\label{lem.cvDmPhi} The sequence
$(D_p\mathcal H(y, D_m\Phi^N(m,y) ))$ converges to $\frac{dp_1}{d m\otimes \mu} $ in  $L^2({\mathcal P}(\T^d)\times\T^d, dm\otimes \mu)$.  Moreover,
\begin{equation}\label{ekzjrsd}
\begin{aligned}
\lim_{N\to+\infty}-\int_{{\mathcal P}(\T^d)\times \T^d} \dive_y D_m \Phi^N(m, y) m(dy)\mu(dm) +   \int_{{\mathcal P}(\T^d)\times \T^d} \mathcal H(y, D_m\Phi^N(m,y), m)m(dy) \mu(dm)\\
=\lambda.
\end{aligned}
\end{equation}
\end{Lemma}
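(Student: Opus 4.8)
The plan is to exploit the duality between $I$ and the Mather problem together with the uniform convexity of $\mathcal H$ (hypothesis \eqref{1}), which is the standard mechanism by which minimizing sequences of a convex problem converge strongly. First I would test the closedness relation \eqref{ehkjrbndkfgc} for the closed measure $(\mu, p_1)$ against each $\Phi^N\in C^{1,1}(\mathcal P(\T^d))$; this gives
\begin{equation}
-\int_{{\mathcal P}(\T^d)\times \T^d} D_m\Phi^N(m,y)\cdot p_1(dm,dy) + \int_{{\mathcal P}(\T^d)\times \T^d} \dive_y D_m\Phi^N(m,y)\, m(dy)\mu(dm) = 0.
\end{equation}
Writing $p_1 = \frac{dp_1}{dm\otimes\mu}\, m(dy)\mu(dm)$ and combining this identity with the Fenchel--Young inequality $\mathcal H(y,D_m\Phi^N,m) + \mathcal H^*\!\bigl(y,\frac{dp_1}{dm\otimes\mu},m\bigr) \ge D_m\Phi^N \cdot \frac{dp_1}{dm\otimes\mu}$ pointwise, I would obtain, after integrating against $m(dy)\mu(dm)$ and substituting the closedness identity,
\begin{equation}
-\!\int \dive_y D_m\Phi^N(m,y)\, m(dy)\mu(dm) + \!\int \mathcal H(y,D_m\Phi^N(m,y),m)\, m(dy)\mu(dm) \ge -\!\int \mathcal H^*\!\Bigl(y,\tfrac{dp_1}{dm\otimes\mu},m\Bigr) m(dy)\mu(dm) = I = \lambda,
\end{equation}
where the last equality is Proposition \ref{prop.I=lambda}. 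On the other hand, since $\mu$ is admissible in the sup defining $I$ and $\Phi^N$ is a minimizing sequence for \eqref{pbI}, the left-hand side is bounded above by $\sup_m \int_{\T^d}(\mathcal H - \dive_y D_m\Phi^N)m(dy) \to I = \lambda$. This sandwiches the left-hand side between something tending to $\lambda$ and something $\ge\lambda$, proving \eqref{ekzjrsd}.

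For the strong convergence of $(D_p\mathcal H(y,D_m\Phi^N(m,y)))$, I would quantify the gap in the Fenchel--Young inequality. By \eqref{1}, $\mathcal H(x,\cdot,m)$ is uniformly convex with modulus $\bar C^{-1}$, equivalently $\mathcal H^*(x,\cdot,m)$ has $\bar C$-Lipschitz gradient; a standard computation then gives, for $a := D_p\mathcal H(y,D_m\Phi^N(m,y))$ (so that $D_m\Phi^N = D_a\mathcal H^*(y,a,m)$) and $b := \frac{dp_1}{dm\otimes\mu}$,
\begin{equation}
\mathcal H(y,D_m\Phi^N,m) + \mathcal H^*(y,b,m) - D_m\Phi^N \cdot b \ \ge\ \frac{1}{2\bar C}\,|a - b|^2.
\end{equation}
Integrating this against $m(dy)\mu(dm)$, the left-hand side equals the difference between the quantity in \eqref{ekzjrsd} (using the closedness identity to replace $\int D_m\Phi^N\cdot p_1$ by $\int \dive_y D_m\Phi^N\, dm\,d\mu$) and $\lambda$, which we have just shown tends to $0$. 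Hence $\int |a-b|^2\, dm\otimes\mu \to 0$, i.e. $D_p\mathcal H(y,D_m\Phi^N(m,y)) \to \frac{dp_1}{dm\otimes\mu}$ in $L^2({\mathcal P}(\T^d)\times\T^d, dm\otimes\mu)$, which is the first assertion.

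The main obstacle I anticipate is purely a matter of care rather than depth: justifying that the integrals $\int \mathcal H(y,D_m\Phi^N,m)\,m(dy)\mu(dm)$ and $\int \dive_y D_m\Phi^N\, m(dy)\mu(dm)$ are well-defined and finite along the minimizing sequence — i.e. that one may choose $\Phi^N$ with the relevant integrability (a priori $\Phi^N$ only gives a finite \emph{sup} over $m$, not obviously a finite integral of each piece) — and that the pointwise Fenchel--Young manipulation survives integration. One handles this by noting that $\sup_m \int_{\T^d}(\mathcal H(y,D_m\Phi^N,m) - \dive_y D_m\Phi^N)m(dy)$ being bounded, together with the lower bound on $\mathcal H$ coming from \eqref{1} and the boundedness of $\mathcal F$, forces $\int |D_m\Phi^N(m,y)|^2 m(dy)$ to be bounded uniformly in $m$, hence the two integrals against $\mu$ are controlled; the divergence term is then handled via the closedness identity, which trades it for $\int D_m\Phi^N \cdot p_1$, finite since $D_m\Phi^N\in L^2(dm\otimes\mu)$ and $\frac{dp_1}{dm\otimes\mu}\in L^2(dm\otimes\mu)$ by the growth of $\mathcal H^*$ (as already observed in the proof of Proposition \ref{prop.dual}). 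Everything else is the routine convex-duality sandwich described above.
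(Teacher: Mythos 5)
Your proposal is correct and uses essentially the same ingredients as the paper's proof: the closedness identity \eqref{ehkjrbndkfgc} to trade the divergence term for the $p_1$-pairing, Fenchel--Young together with minimality of $(\mu,p_1)$ and the minimizing-sequence property of $\Phi^N$ to sandwich the integral and get \eqref{ekzjrsd}, and the quantitative uniform-convexity gap from \eqref{1} to upgrade this to $L^2(dm\otimes\mu)$ convergence. The only difference is cosmetic ordering: the paper deduces the $L^2$ convergence first directly from the $o_N(1)$ subsolution bound \eqref{eq.WNbis} and then establishes \eqref{ekzjrsd}, whereas you prove \eqref{ekzjrsd} first and feed it back into the quantified Fenchel gap; both routes are valid.
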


\begin{proof} Recall that $\Phi^N$ satisfies 
\begin{align}\label{eq.WNbis}
& -\int_{\T^d} \dive_y D_m \Phi^N(m, y) m(dy) + \int_{\T^d} \mathcal H(y, D_m\Phi^N(m,y), m)m(dy) 
%\\
%& 
\leq \lambda+  o_N(1) .
\end{align}
 We integrate equation \eqref{eq.WNbis} against $\mu$ and add the problem for $(\mu,p_1)$ to find: 
\begin{align*}
& -\int_{{\mathcal P}(\T^d)\times \T^d} \dive_y D_m \Phi^N(m, y) m(dy)\mu(dm) + \int_{{\mathcal P}(\T^d)\times \T^d} \mathcal H(y, D_m\Phi^N(m,y), m)m(dy) \mu(dm)\\
& \qquad + \int_{{\mathcal P}(\T^d)\times\T^d}\mathcal  H^*\Bigl(y, \frac{dp_1}{d m\otimes \mu},m\Bigr) m(dy)\mu(dm) \leq   o_N(1) .
\end{align*}
Using the uniform convexity of $\mathcal H$, this implies that
\begin{align*}
& -\int_{{\mathcal P}(\T^d)\times \T^d} \dive_y D_m \Phi^N(m, y) m(dy)\mu(dm) + \int_{{\mathcal P}(\T^d)\times\T^d} \frac{dp_1}{d m\otimes \mu}\cdot  D_m\Phi^N(m,y)  m(dy)\mu(dm) \\
& \qquad  + C^{-1} \int_{{\mathcal P}(\T^d)\times\T^d} \left |\frac{dp_1}{d m\otimes \mu} - D_p\mathcal H(y, D_m\Phi^N(m,y),m )\right|^2m(dy)\mu(dm)   \leq   o_N(1) .
\end{align*}
Then, \eqref{ehkjrbndkfgc} implies that the first line vanishes and so 
\begin{align*}
%& -\int_{{\mathcal P}(\T^d)\times \T^d} \dive_y D_m \Phi^N(m, y) m(dy)\mu(dm) - \int_{{\mathcal P}(\T^d)\times\T^d} \frac{dp_1}{d m\otimes \mu}\cdot  D_m\Phi^N(m,y)  m(dy)\mu(dm) \\
& \qquad  \int_{{\mathcal P}(\T^d)\times\T^d} \left |\frac{dp_1}{d m\otimes \mu} - D_p\mathcal H(y, D_m\Phi^N(m,y) ,m)\right|^2m(dy)\mu(dm)   \leq   o_N(1) ,
\end{align*}
which proves the first statement of the lemma. We now turn to \eqref{ekzjrsd}. First of all, as $\Phi^N$ is a minimizing sequence for \eqref{pbI}, we have that 
\begin{equation}\label{limsup1.4}
\begin{aligned}
\limsup_{N\to+\infty}-\int_{{\mathcal P}(\T^d)\times \T^d} \dive_y D_m \Phi^N(m, y) m(dy)\mu(dm) +   \int_{{\mathcal P}(\T^d)\times \T^d} \mathcal H(y, D_m\Phi^N(m,y), m)m(dy) \mu(dm)\\
\leq\lambda.
\end{aligned}
\end{equation}
To prove the other inequality we start with
$$
-\int_{{\mathcal P}(\T^d)\times \T^d} \dive_y D_m \Phi^N(m, y) m(dy)\mu(dm) +   \int_{{\mathcal P}(\T^d)\times \T^d} \mathcal H(y, D_m\Phi^N(m,y), m)m(dy) \mu(dm).
$$
We add and subtract the same quantity to get
$$
-\int_{{\mathcal P}(\T^d)\times \T^d} \dive_y D_m \Phi^N(m, y) m(dy)\mu(dm) + \int_{{\mathcal P}(\T^d)\times\T^d} \frac{dp_1}{d m\otimes \mu}\cdot  D_m\Phi^N(m,y)  m(dy)\mu(dm)+
$$
$$
  \int_{{\mathcal P}(\T^d)\times \T^d} \mathcal H(y, D_m\Phi^N(m,y), m)m(dy) \mu(dm)-\int_{{\mathcal P}(\T^d)\times\T^d} \frac{dp_1}{d m\otimes \mu}\cdot  D_m\Phi^N(m,y)  m(dy)\mu(dm).
$$
As $(\mu,p_1)$ verifies \eqref{ehkjrbndkfgc}, the first line above vanishes. Then,  using the Fenchel's inequality, we find that
$$
-\int_{{\mathcal P}(\T^d)\times \T^d} \dive_y D_m \Phi^N(m, y) m(dy)\mu(dm) +   \int_{{\mathcal P}(\T^d)\times \T^d} \mathcal H(y, D_m\Phi^N(m,y), m)m(dy) \mu(dm)\geq
$$
$$
-\int_{{\mathcal P}(\T^d)} \int_{\T^d} \mathcal H^*\Bigl(y, \frac{dp_1}{d m\otimes \mu},m\Bigr) m(dy)\mu(dm).
$$
By hypothesis, $(\mu,p_1)$ is a minimizer for \eqref{dualpb}, so, for any $N\in\N$,
\begin{equation}
-\int_{{\mathcal P}(\T^d)\times \T^d} \dive_y D_m \Phi^N(m, y) m(dy)\mu(dm) +   \int_{{\mathcal P}(\T^d)\times \T^d} \mathcal H(y, D_m\Phi^N(m,y), m)m(dy) \mu(dm)\geq\lambda.
\end{equation}
Therefore,
\begin{equation}\label{liminf1.4}
\begin{aligned}
\liminf_{N\to+\infty}-\int_{{\mathcal P}(\T^d)\times \T^d} \dive_y D_m \Phi^N(m, y) m(dy)\mu(dm) +   \int_{{\mathcal P}(\T^d)\times \T^d} \mathcal H(y, D_m\Phi^N(m,y), m)m(dy) \mu(dm)\\
\geq\lambda.
\end{aligned}
\end{equation}
and the result follows.
\end{proof}

\begin{proof}[Proof of Proposition \ref{eq.lem.mup1}] From our assumption on $(\mu,p_1)$, we have 
\begin{align*}
\int_{{\mathcal P}(\T^d)\times \T^d} \dive_y D_m \Phi^N(m, y) m(dy)\mu(dm) = 
-\int_{{\mathcal P}(\T^d)\times \T^d} D_m \Phi^N(m, y)\cdot Dm(y)dy\mu(dm).
\end{align*}
As, by Lemma \ref{lem.cvDmPhi}, the sequence
$(D_p\mathcal H(y, D_m\Phi^N(m,y) ),m)$ converges to $\frac{dp_1}{d m\otimes \mu} $ in  $L^2({\mathcal P}(\T^d)\times\T^d, dm\otimes \mu)$, we also have that 
$(D_m\Phi^N(m,y) )$ converges to $q_1$  in  $L^2({\mathcal P}(\T^d)\times\T^d, dm\otimes \mu)$ by regularity and invertibility of $D_p\mathcal H$ and $D_a\mathcal H^*$. On the other hand,  $\frac{Dm(y)}{m(y)}$ is bounded in $L^2_m(\T^d)$ for $\mu-$a.e. $m\in\mPT$. Therefore 
\begin{align*}
\lim_{N\to+\infty} \int_{{\mathcal P}(\T^d)\times \T^d} \dive_y D_m \Phi^N(m, y) m(dy)\mu(dm) & = -
\int_{{\mathcal P}(\T^d)\times \T^d} q_1 (m, y)\cdot Dm(y)dy\mu(dm). 
\end{align*}
We conclude thanks to \eqref{ekzjrsd} that 
\begin{align*}
& \int_{{\mathcal P}(\T^d)\times \T^d} q_1 (m, y)\cdot Dm(y)dy\mu(dm)+ \int_{{\mathcal P}(\T^d)\times \T^d} \mathcal H(y, q_1 (m, y) , m)m(dy) \mu(dm)=\lambda.
\end{align*}
On the other hand, extracting a subsequence if necessary, the sequence $\{D_m\Phi^N(m,y)\}_N$ converges $\mu-$a.e. to $q_1$. So, by \eqref{eq.WNbis}, we have, for $m-$a.e. $m\in \mPT$, 
\begin{align*}
& \int_{\T^d} q_1(y,m) \cdot Dm(y)dy + \int_{\T^d} \mathcal H(y, q_1(y,m), m)m(dy) 
\leq \lambda. 
\end{align*}
Putting together the previous inequality with the previous equality gives the result. 

\end{proof}

%%%%%%%%%%%%%%%%%%%
\section{The long time behavior of potential MFG}\label{sec.finconv}
In this section, we prove the two main results of the paper: the first one is the convergence, as $T\to +\infty$, of ${\mathcal U}^T(0, \cdot)+\lambda T$. The second one states that limits of time-dependent minimizing mean field games equilibria, as the horizon tends to infinity, are calibrated curves.

%In this section we will need to keep track of $F$ and its potential $\mathcal F$, so we will no longer use $\mathcal H$ but we will keep $H$ and $\mathcal F$ separated in notation.

\subsection{Convergence of $\mathcal U^T(0,\cdot)+\lambda T$}

We recall that $\mathcal U^T(t,m_0)$ is defined by
$$
{\mathcal U}^T(t,m_0)= \inf_{(m,\alpha)} \int_t^T \left(\int_{\T^d} H^*\left(y, \alpha(s,y)\right)m(s,y)dy + {\mathcal F}(m(s))\right)\ ds ,
$$ 
where $(m,\alpha)$ verifies the usual constraint
$$
\partial_t m-\Delta m -\dive(m\alpha)=0\; {\rm in}\; (t,T)\times \T^d, \qquad m(t)=m_0.
$$
Let $(m^T,\alpha^T)$ be a minimizer of the problem, then, $\alpha^T(s,x)=D_p H(x,Du^T(s,x))$, where $(u^T, m^T)$ solves
\begin{equation}\label{psystem}
\begin{cases} 
-\partial_t u-\Delta u+H(x,Du)=F(x,m) & \mbox{in } \T^d\times[t,T]\\
-\partial_t m+\Delta m+{\rm div}(mD_pH(x,Du))=0 & \mbox{in }\T^d\times[t,T]\\
m(t)=m_0,\; u(T,x)=0& \mbox{in }\T^d,
\end{cases}
\end{equation}  
(see for instance \cite{briani2018stable} for details). 

We also take from \cite[Lemma 1.3]{masoero2019} some uniform estimates on the solutions of \eqref{psystem} which will be useful in the next propositions.
\begin{Lemma}\label{2est} There exists $C>0$ independent of $m_0$ and $T$ such that, if $(u,m)$ is a classical solution of \eqref{psystem}, then
	
	\begin{itemize}
		\item $\Vert D u\Vert_{L^\infty([0,T]\times\T^d)}+  \Vert D^2 u\Vert_{L^\infty([0,T]\times\T^d)}\leq C$
		\item $\mathbf d(m(s),m(l))\leq C |l-s|^{1/2}$ for any $l,s\in [0,T]$
	\end{itemize}
	
	Consequently, we also have that $|\partial_{t}u(s,\cdot)|\leq C$ for any $s\in[0,T]$.
\end{Lemma}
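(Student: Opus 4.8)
The plan is to establish the three bullet points in order; the final "consequently" statement is then immediate from the Hamilton--Jacobi equation. Throughout, the real content is that every constant can be chosen independent of the horizon $T$, which forces a slightly different argument near the terminal time $T$ (where one exploits the smoothness of the terminal datum $u(\cdot,T)\equiv 0$) than in the bulk (where one exploits the superlinearity of $H$). The argument follows the a priori estimates of \cite{masoero2019}; I sketch it for completeness.

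\textbf{Step 1: the gradient bound $\|Du\|_{L^\infty([0,T]\times\T^d)}\le C$.} I would run a Bernstein-type argument on the first equation of \eqref{psystem}. Setting $w:=|Du|^2$ and differentiating the Hamilton--Jacobi equation in $x_j$, multiplying by $2u_{x_j}$ and summing, one obtains
$$
-\partial_t w-\Delta w+D_pH(x,Du)\cdot Dw+2|D^2u|^2 = 2\,Du\cdot D_xF(x,m)-2\,Du\cdot D_xH(x,Du).
$$
Since $F$ is bounded (as $\mathcal F$ is $C^2$ on the compact space $\mathcal P(\T^d)$) and $D_xH$ has at most linear growth in $p$ by \eqref{DxH.bounds}, the right-hand side is $\le C(1+w)$; the coercivity \eqref{1} turns $2|D^2u|^2$ (via $|D^2u|^2\ge d^{-1}(\Delta u)^2$ and the equation itself) into a genuinely superquadratic good term in $w$. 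Evaluating at a maximum point of $w$ on $\T^d\times[t,T]$: if it is attained at time $T$ then $w\equiv0$ there because $u(\cdot,T)\equiv0$; otherwise the first/second order conditions in $x$ at the maximum, together with the superquadratic term, bound $\max w$ by a constant independent of $t$ and $T$. (Equivalently, combine the regularizing effect $\|Du(s)\|_\infty\lesssim 1+(T-s)^{-1/2}$ on $[t,T-1]$ with a direct fixed-horizon estimate on $[T-1,T]$ coming from the smooth terminal datum.) This is the Lipschitz estimate in the spirit of \cite{lions2005homogenization}.

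\textbf{Step 2: the $1/2$-H\"older estimate on $m$, and Step 3: the Hessian bound.} Once $\|Du\|_\infty\le C$, the drift $\alpha=D_pH(x,Du)$ is bounded uniformly in $T$. Representing $m$ as the time-marginal law of $dX_\tau=-\alpha(\tau,X_\tau)\,d\tau+\sqrt2\,dB_\tau$ with $X_s\sim m(s)$, one gets for $s\le l$
$$
\mathbf d(m(s),m(l))\le \mathbb{E}\,|X_l-X_s|\le \|\alpha\|_\infty|l-s|+\sqrt2\,\mathbb{E}\,|B_l-B_s|\le C|l-s|^{1/2}
$$
when $|l-s|\le1$, and $\mathbf d(m(s),m(l))\le \mathrm{diam}(\T^d)\le C|l-s|^{1/2}$ otherwise (or test the Fokker--Planck equation against Lipschitz functions, mollifying to handle $\Delta m$). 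For the Hessian bound, note that with $Du$ bounded the matrices $D_{pp}H(x,Du)$, $D_{xp}H(x,Du)$, $D_{xx}H(x,Du)$ are all bounded (this is where the growth conditions \eqref{2} are comfortably met), so each $\partial_{x_k}u$ solves the linear uniformly parabolic equation
$$
-\partial_t(\partial_{x_k}u)-\Delta(\partial_{x_k}u)+D_pH(x,Du)\cdot D(\partial_{x_k}u)=\partial_{x_k}F(x,m)-\partial_{x_k}H(x,Du)
$$
with bounded coefficients and with right-hand side bounded and H\"older continuous in $(x,t)$ --- Lipschitz in $x$ since $D^2_{xx}F$ is bounded, and continuous in $t$ by Step 2 together with the regularity of $F$ in $m$. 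Interior parabolic Schauder estimates (uniform in $T$, being local in time with data bounded uniformly in $T$), complemented near $t=T$ by a fixed-horizon estimate using the smooth terminal datum $\partial_{x_k}u(\cdot,T)\equiv0$, give $\|D^2u\|_\infty\le C$. Alternatively a second Bernstein computation on $\sum_{j,k}|u_{x_jx_k}|^2$ works, the terms with $D_{xx}H$, $D_{xp}H$ being absorbed thanks to \eqref{2} and the bound on $Du$.

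\textbf{Conclusion and main difficulty.} From the Hamilton--Jacobi equation $\partial_t u=-\Delta u+H(x,Du)-F(x,m)$, so $|\partial_t u(s,\cdot)|\le C$ follows from Steps 1 and 3 and the boundedness of $F$. The only genuinely delicate point is Step 1: getting the gradient bound with a constant that does not deteriorate as $T\to\infty$, which crucially uses both the superlinearity of $H$ (to prevent accumulation of the gradient over long times) and the smoothness of the terminal datum (to control a neighborhood of $t=T$). Everything downstream is a soft bootstrap via linear parabolic theory.
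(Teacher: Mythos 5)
The paper does not actually prove Lemma~\ref{2est}: it is imported wholesale from \cite[Lemma 1.3]{masoero2019}, so there is no in-paper argument against which to compare your strategy. Taken on its own merits, your outline of Steps~2 and~3 and the final deduction of the $\partial_t u$ bound are sound. However, there is a genuine gap in Step~1, which you yourself single out as the delicate one.

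Your Bernstein computation correctly yields, at an interior maximum of $w=|Du|^2$,
$2|D^2u|^2\le C(1+w)$. You then want to turn this into a superquadratic good term in $w$ ``via $|D^2u|^2\ge d^{-1}(\Delta u)^2$ and the equation itself.'' But the equation gives $\Delta u=-\partial_t u+H(x,Du)-F(x,m)$, so extracting the superquadratic term $H(x,Du)\ge c\,w-C$ requires a bound on $\partial_t u$ --- which is exactly what is being derived as a consequence of this lemma. There is a circular dependency, and the maximality of $w$ gives you $\partial_t w\le 0$, not control of $\partial_t u$ itself. Your parenthetical alternative via the smoothing bound $\|Du(s)\|_\infty\lesssim 1+(T-s)^{-1/2}$ suffers from the same issue: that estimate comes with a prefactor involving $\|u\|_\infty$ (or the spatial oscillation of $u$), and $\|u(s,\cdot)\|_\infty$ grows linearly in $T-s$, so without additional input the constant degenerates as $T\to\infty$.

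The missing ingredient is a $T$-independent a priori bound on the \emph{spatial oscillation} $\mathrm{osc}_{x}\,u(s,\cdot)$, uniform in $s$. This is what the standard argument (used in \cite{masoero2019} and going back to the long-time MFG literature, e.g.\ Cardaliaguet--Lasry--Lions--Porretta) supplies: from the control representation, $u(s,x)-u(s,y)$ is bounded by comparing the cost of steering from $x$ to the optimal trajectory issuing from $y$ within a unit time interval, and the running cost $H^*-F$ is bounded below while the boundedness of $\mathcal F$ and the quadratic growth of $H^*$ bound the cost of that short detour. With the oscillation under control, one subtracts the spatial average, obtains a uniform $L^\infty$ bound on $u-\fint u$, and then the local-in-time Bernstein (or smoothing) estimate closes with a $T$-independent constant. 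Once you add this step, the rest of your bootstrap (Hölder estimate on $m$, Schauder for $D^2u$ with the terminal-layer treatment near $t=T$, and the $\partial_t u$ bound from the equation) goes through as you describe.
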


\begin{Lemma}\label{lem.Energie} For any $(u,m)$ solution of the MFG system \eqref{psystem}, there exists $c(u,m)\in \R$ such that, for any $t\in [0,T]$,  
	$$
	\int_{\T^d} \Bigl( H(x,Du^T(t,x)) - \Delta u^T(t,x)\Bigr) m^T(t,dx)  - {\mathcal F}(m^T(t)) = c(m,u).
	$$
\end{Lemma}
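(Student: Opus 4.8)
The statement is a ``conservation of energy'' along solutions of the MFG system, so the plan is: fix a classical solution $(u,m)$ of \eqref{psystem}, set
$$
E(t):= \int_{\T^d} \bigl( H(x,Du(t,x)) - \Delta u(t,x)\bigr) m(t,dx) - \mathcal F(m(t)),
$$
and prove that $E'(t)\equiv 0$ on $[0,T]$; this gives the claim with $c(u,m):=E(0)$. By Lemma \ref{2est}, together with standard parabolic regularity (recall $H$ and $F$ are smooth), $u$ and $m$ are regular enough, for $t\in(0,T)$, to differentiate under the integral sign and to integrate by parts on $\T^d$, and $E$ is continuous up to $t=0,T$; I would use this without further comment.

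I would then compute $E'(t)$ by splitting it into three contributions: the term $A$ from differentiating $Du$ and $\Delta u$ inside the integral, the term $B$ from differentiating the measure $m(t,dx)$, and the term $C=\tfrac{d}{dt}\mathcal F(m(t))$. Since $F(\cdot,m)$ is the $L^2$-derivative of $\mathcal F$ and $\int_{\T^d}m(t,dx)\equiv 1$ (so the additive ambiguity in $\frac{\delta\mathcal F}{\delta m}$ is harmless), one has $C=\int_{\T^d}F(x,m(t))\,\partial_t m(t,x)\,dx$. Using the first equation of \eqref{psystem} in the form $H(x,Du)-\Delta u-F(x,m)=\partial_t u$, the difference $B-C=\int_{\T^d}\bigl(H(x,Du)-\Delta u-F(x,m)\bigr)\partial_t m\,dx$ collapses to $\int_{\T^d}\partial_t u\,\partial_t m\,dx$.

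For $A=\int_{\T^d}\bigl(D_pH(x,Du)\cdot D(\partial_t u)-\Delta(\partial_t u)\bigr)m\,dx$ I would integrate by parts: the first piece becomes $-\int_{\T^d}\partial_t u\,\dive\bigl(m\,D_pH(x,Du)\bigr)\,dx$ and the second becomes $-\int_{\T^d}\partial_t u\,\Delta m\,dx$, so $A=-\int_{\T^d}\partial_t u\,\bigl(\Delta m+\dive(m\,D_pH(x,Du))\bigr)\,dx$. The second equation of \eqref{psystem} gives $\Delta m+\dive(m\,D_pH(x,Du))=\partial_t m$, whence $A=-\int_{\T^d}\partial_t u\,\partial_t m\,dx$. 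Adding the contributions, $E'(t)=A+(B-C)=0$, as desired, and $E(t)=E(0)=:c(u,m)$ for all $t\in[0,T]$ by continuity.

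The computation is short, and the only real obstacle is the regularity needed to justify differentiating under the integral and the two integrations by parts (in particular that $m(t)$ has a density smooth enough that $\Delta m$ makes sense). Given Lemma \ref{2est} and Proposition \ref{duplc}, interior smoothness of $(u,m)$ should suffice on $(0,T)$, with the endpoint cases handled by continuity of $E$; alternatively, one can run the very same algebra at the level of the weak formulations of the two equations on a subinterval $[t_1,t_2]\subset(0,T)$ — essentially pairing the $m$-equation with $\partial_t u$ and the $u$-equation with $\partial_t m$ — to get $E(t_2)=E(t_1)$ without invoking pointwise time-derivatives.
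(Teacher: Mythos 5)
Your proof is correct and follows essentially the same route as the paper: differentiate the energy in time, integrate by parts so that $\partial_t u$ pairs with the Fokker--Planck equation and $\partial_t m$ with the Hamilton--Jacobi equation, and observe the cancellation $-\int \partial_t u\,\partial_t m + \int \partial_t u\,\partial_t m = 0$. The only cosmetic difference is that the paper first rewrites $-\int \Delta u\, m$ as $\int Du\cdot Dm$ before differentiating, while you keep the Laplacian form; the regularity discussion and the continuity-at-endpoints remark match the paper's as well.
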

\begin{proof} As for any $t>0$ both $m^T$ and $u^T$ are smooth in time and space, the integral 
	$$
	\int_{\T^d} H(x,Du(t,x))m^T(t,x) +Du(t,x)\cdot Dm(t,x)dx  - {\mathcal F}(m(t)) 
	$$
	is well defined and we can derive it in time. Then,
	$$
	\int_{\T^d} D_pH(x,Du(t,x))\partial_tDu(t,x)m(t,x)+H(x,Du(t,x))\partial_tm(t,x) +
	$$
	$$
	\int_{\T^d}\partial_tDu(t,x)\cdot Dm(t,x) dx+Du(t,x)\cdot \partial_tDm(t,x) - F(x,m(t))\partial_tm(t,x).
	$$
	Integrating by parts and rearranging the terms we get that the above expression is equal to
	
	$$
	\int_{ \T^d}(-\Delta m(t,x)-\dive(m(t,x)D_p H(x,Du^T(t,x)))\partial_t u^T(t,x)dx+
	$$
	$$
	(-\Delta u^T(t,x)+H(x,Du^T(t,x)-F(x,m^T(t,x)))\partial_tm^T(t,x)dx.
	$$
	If we plug into the last equality the equations verified by $u^T$ and $m^T$, we get
	$$
	\frac{d}{dt}\left(\int_{\T^d} H(x,Du^T(t,x))m^T(t,x) + Du^T(t,x)\cdot Dm^T(t,x) dx - {\mathcal F}(m^T(t))\right)=
	$$
	$$
	\int_{ \T^d}-\partial_t m^T(t,x)\partial_tu^T(t,x)+\partial_t m^T(t,x)\partial_tu^T(t,x)=0.
	$$
	Integrating by parts the term $Du^T(t,x)\cdot Dm^T(t,x)$ and using the continuity of $t\to u(t,\cdot)$ in $C^2(\T^d)$ and the continuity of $t\to m(t)$ in $\Pw$ we conclude that the result holds. 
	
\end{proof}

\begin{Proposition}\label{prop.limcT} Let $(m^T,\alpha^T)$ be optimal for $\mathcal U(T,m_0)$ and $(u^T,m^T)$ be a solution of \eqref{psystem} associated to $(m^T,\alpha^T)$. Then, $c(u^T,m^T)\to \lambda$ as $T\to +\infty$. Moreover, this limit is uniform with respect to the initial condition $m_0$ and the choice of the minimizer $(m^T,\alpha^T)$.
\end{Proposition}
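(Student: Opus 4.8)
The goal is to show that the "energy constant" $c(u^T,m^T)$ along a minimizing trajectory converges to the ergodic constant $\lambda$, uniformly in $m_0$ and in the choice of minimizer. I would attack this by sandwiching $c(u^T,m^T)$ between an upper bound coming from the sub-corrector side ($I=\lambda$) and a lower bound coming from the definition of $\lambda$ via $\mathcal U^T/T \to -\lambda$.

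\textbf{Upper bound.} First I would observe that, by Lemma \ref{lem.Energie}, the quantity
$$
c(u^T,m^T)= \int_{\T^d}\bigl(H(x,Du^T(t,x))-\Delta u^T(t,x)\bigr)m^T(t,dx)-\mathcal F(m^T(t))
$$
is independent of $t\in[0,T]$. Evaluating it at a convenient $t$ and testing the optimal curve $(m^T,\alpha^T)$ against a near-optimal competitor $\Phi$ for problem \eqref{pbI} (exactly as in the proof of Proposition \ref{prop.lambdaI}, but now run on a finite window and with the dynamic programming characterization of $\mathcal U^T$ rather than on a calibrated curve) should give, for every $\ep>0$ and every subinterval, an upper estimate of the time-average of $c$ — hence of $c$ itself — by $I+\ep = \lambda+\ep$ plus a boundary term $\tfrac1{T}(\Phi(m^T(T))-\Phi(m^T(0)))$ which is $O(1/T)$ by boundedness of $\Phi$. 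Since $I=\lambda$ by Proposition \ref{prop.I=lambda}, this yields $\limsup_{T\to\infty} c(u^T,m^T)\le\lambda$, uniformly because the bound on $Du^T,D^2u^T$ in Lemma \ref{2est} and on $\Phi$ are all independent of $m_0$.

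\textbf{Lower bound.} For the reverse inequality I would relate $c(u^T,m^T)$ to the value $\mathcal U^T(0,m_0)$ itself. Along the MFG system, the standard computation (differentiate $\int u^T(t)dm^T(t)$ in $t$, use both equations, and integrate) gives an identity expressing $\mathcal U^T(0,m_0)=\int_{\T^d}u^T(0,x)m_0(dx)$ — or rather its running cost form — in terms of $\int_0^T c(u^T,m^T)\,dt = T\,c(u^T,m^T)$ plus controlled boundary terms bounded via Lemma \ref{2est}. Concretely, since $H^*(x,\alpha^T)=D_pH\cdot Du^T-H$ along the optimal trajectory, the running Lagrangian $\int_{\T^d}H^*(x,\alpha^T)dm^T+\mathcal F(m^T)$ can be rewritten, after an integration by parts using the Fokker–Planck equation, as $-c(u^T,m^T)$ plus a total time-derivative of $\int u^T dm^T$; integrating over $[0,T]$ gives $\mathcal U^T(0,m_0)= -T\,c(u^T,m^T) + \bigl(\text{bdry terms}\bigr)$ with the boundary terms uniformly $O(1)$. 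Dividing by $T$ and invoking Theorem \ref{uconv} ($\mathcal U^T(0,\cdot)/T\to-\lambda$ uniformly) forces $c(u^T,m^T)\to\lambda$, again uniformly in $m_0$ and in the selected minimizer.

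\textbf{Main obstacle.} I expect the delicate point to be making the two computations genuinely \emph{uniform} in $m_0$ and in the choice of minimizer $(m^T,\alpha^T)$ — in particular controlling the boundary terms $\int u^T(0)dm_0$ and $\int u^T(T)dm^T(T)$ and the term $\Phi(m^T(T))-\Phi(m^T(0))$ without any compactness in $m_0$. This should be handled by the a priori bounds of Lemma \ref{2est} ($\|Du^T\|_\infty+\|D^2u^T\|_\infty\le C$ and $|\partial_t u^T|\le C$, all independent of $m_0$ and $T$), which bound $u^T$ up to an additive constant; the additive constant drops out of $c(u^T,m^T)$ and, in the lower-bound identity, is precisely what is being compared against $\mathcal U^T(0,m_0)$. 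A secondary subtlety is that $\mathcal U^T$ in \eqref{pbI}/\eqref{pbI} is an infimum over an infinite-dimensional class $C^{1,1}(\mathcal P(\T^d))$, so one must make sure the test function $\Phi$ can be applied along $m^T(\cdot)$; since $m^T\in C^0([0,T],\mathcal P(\T^d))$ and $\Phi$ is $C^{1,1}$, the chain rule used in Proposition \ref{prop.lambdaI} applies verbatim. The rest is routine.
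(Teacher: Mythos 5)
Your proof has a fatal flaw in the lower bound, and as a consequence the upper bound does not deliver what you need either.

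The key claimed identity in your lower bound is that the running Lagrangian $L(t):=\int_{\T^d}H^*(x,\alpha^T)\,dm^T(t)+\mathcal F(m^T(t))$ can be rewritten as $-c(u^T,m^T)$ plus a total time-derivative of $\int u^T\,dm^T$. Carrying out the computation you describe (Fenchel: $H^*(x,\alpha^T)=Du^T\cdot D_pH-H$; then the two PDEs and integration by parts) gives
\begin{align}
\frac{d}{dt}\int_{\T^d} u^T\,dm^T &= \int_{\T^d} H(x,Du^T)\,dm^T-\int_{\T^d} F(x,m^T)\,dm^T-\int_{\T^d} Du^T\cdot D_pH\,dm^T,
\end{align}
so that
\begin{align}
L(t) &= -\frac{d}{dt}\int_{\T^d} u^T\,dm^T+\Bigl(\mathcal F(m^T(t))-\int_{\T^d} F(x,m^T(t))\,m^T(t,dx)\Bigr).
\end{align}
The constant $c(u^T,m^T)$ has dropped out entirely; what remains is a total derivative plus the term $\mathcal F-\int F\,dm$, which is neither $-c$ nor a total derivative. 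So integrating $L$ over $[0,T]$ and invoking $\mathcal U^T/T\to-\lambda$ tells you nothing about $c(u^T,m^T)$. Equivalently, if you try to solve $L+c=\frac{d}{dt}(\cdot)$ you find $L+c=\int(Du^T\cdot D_pH-\Delta u^T)\,dm^T$, and reducing that to a time-derivative reintroduces $c$ with the opposite sign and cancels it. This is not a technical slip: the energy constant $c$ and the time-averaged Lagrangian are genuinely different invariants of the trajectory, and the second does not control the first.

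The upper bound has the same issue. Testing the optimal curve against a near-optimal sub-corrector $\Phi$ (as in Proposition \ref{prop.lambdaI}) produces, after Fenchel's inequality and integration in time, the bound $-\frac{1}{T}\bigl(\Phi(m^T(T))-\Phi(m^T(0))\bigr)-\frac{1}{T}\int_0^T L(t)\,dt\le I+\varepsilon$, i.e.\ again only an estimate on the time average of $L$ (recovering $\lambda\le I$), not on $c$. To obtain a bound on $c$ from $\Phi$ you would need $D_m\Phi(m^T(t),\cdot)=Du^T(t,\cdot)$ $m^T(t)$-a.e., but $\Phi$ is an arbitrary near-optimal subsolution and has no such matching property.

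The paper's route is different and is needed precisely because of the above: it argues by contradiction, extracts from a hypothetical sequence of minimizers a time-averaged occupation measure $\nu^i$ on $\mPT\times E$ and shows that its limit induces a closed pair $(\mu,p_1)$ which is in fact a \emph{smooth} Mather measure (the smoothness uses the Harnack-type lower bound on $m^i(t)$ for $t\ge1$). It then invokes Proposition \ref{eq.lem.mup1}, which is the genuinely pointwise statement: for $\mu$-a.e.\ $m$ in the support, the ergodic equation holds with value $\lambda$. Finally, picking times $t_i$ so that $(m^i(t_i),\alpha^i(t_i))$ converges into ${\rm supp}\,\mu$ and using that $c(u^i,m^i)$ is constant in $t$, one sees $c(u^i,m^i)\to\lambda$, contradicting the assumption. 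The essential ingredient your sandwich is missing is exactly this pointwise-on-the-Mather-set information; time-averaged Lagrangian estimates alone cannot produce it.
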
 

\begin{proof} We prove it by contradiction. Let us suppose that there exist a sequence $T_i\rightarrow +\infty$ and a sequence $(m^{i},\alpha^i)$, minimizing $\mathcal U^{T_i}(0,m_0^i)$, such that, for any $i\in\N$ and a some $\varepsilon>0$, 
\begin{equation}\label{ccontr}
	\vert c(u^{i},m^{i})-\lambda\vert\geq\varepsilon,
\end{equation}
where, as usual, $\alpha^i(s,x)=D_p H(x,Du^i(s,x))$ and $(u^i, m^i)$ solves \eqref{psystem}. Thanks to Lemma \ref{2est} we know that there exists $C>0$, independent of $i$ such that 
$$
\sup_{T_i>0} \sup_{t \in [0,T_i]} \|\alpha^i(t)\|_\infty+\|D\alpha^i(t)\|_\infty\leq C.
$$
Let $E$ be the set 
$$
E:= \{ \alpha \in W^{1,\infty}(\T^d, \R^d), \; \|\alpha\|_\infty+\|D\alpha\|_\infty\leq C\}.
$$
Then $E$, endowed with the topology of the uniform convergence, is compact. Moreover, $\alpha^i(t)\in E$ for any $t\in [0,T_i]$. Let us introduce the probability measure $\nu^i$ on $\mPT\times E$ by 
$$
\int_{\mPT\times E} f(m,\alpha) \nu^i(dm,d\alpha)= \frac{1}{T_i-1}\int_1^{T_i} f(m^i(t), \alpha^i(t))dt. 
$$
Then $\nu^i$ converges, up to a subsequence denoted in the same way, to some probability measure $\nu$ on $\mPT\times E$. Note that 
\begin{align*}
\frac{1}{T_i}{\mathcal U}(0,m_0^i) & = \frac{1}{T_i} \int_0^1 \left(\int_{\T^d} H^*(y, \alpha^i(s,y))m^i(s,y) dy\right) + {\mathcal F}(m^i(s)))\ ds\\ 
& \qquad + \frac{T_i-1}{T_i} \int_{\mPT\times E} \left(\int_{\T^d} H^*(y, \alpha(y))m(dy) + {\mathcal F}(m)\right)\ \nu^i(dm,d\alpha)
\end{align*}
Hence, as the left-hand side converges, uniformly with respect to $m_0^i$, to $-\lambda$ (see \cite{masoero2019}), we obtain  
\begin{align}\label{kzaberzrsd}
\int_{\mPT\times E} \left(\int_{\T^d} H^*(y, \alpha(y))m(dy) + {\mathcal F}(m)\right)\ \nu(dm,d\alpha)= -\lambda. 
\end{align}
Now we make the link between $\nu$ and the measure $(\mu,p_1)$ of  Section \ref{sec.Mather}. Let $\mu$ be the first marginal of $\nu$ and let us define the vector measure $p_1$ on $\mPT\times \T^d$ as 
$$
\int_{\mPT\times \T^d} \phi(m,y)\cdot p_1(dm, dy) = \int_{\mPT\times E} \int_{\T^d} \phi(m,y)\cdot \alpha(y) m(dy) \nu(dm,d\alpha)
$$
for any test function $\phi\in C^0(\mPT\times \T^d, \R^d)$. We note that $p_1$ is absolutely continuous with respect to $\mu$, since, if we disintegrate $\nu$ with respect to $\mu$: $\nu=\nu_m(d\alpha)\mu(dm)$, then 
$$
p_1(dm, dy)= \int_{E} \alpha(y) m(dy) \nu_m(d\alpha) \mu(dm).
$$
Therefore,
$$
\frac{dp_1}{dm\otimes \mu}(m,y)= \int_{E} \alpha(y)\nu_m(d\alpha) . 
$$
Let us check that $(\mu, p_1)$ is closed. Indeed, for any map $\Phi\in C^{1,1}({\mathcal P}(\T^d))$, we have 
\begin{align*}
\frac{d}{dt} \Phi(m^i(t)) & = \inte  \dive(D_m\Phi(m^i(t),y))m^i(t,dy) - \inte D_m\Phi(m^i(t),y)\cdot H_p(y, Du^i(t,y))m^i(dy) \\
&= \inte  \dive(D_m\Phi(m^i(t),y))m^i(t,dy) + \inte D_m\Phi(m^i(t),y)\cdot \alpha^i(t,y) m^i(dy).
\end{align*}
So,
\begin{align*}
& \int_{\mPT\times E}  \inte \dive(D_m\Phi(m,y)) +  D_m\Phi(m,y)\cdot \alpha(y) m(dy) \ \nu^i(dm,d\alpha)  \\
& \qquad = \frac{1}{T_i-1} \int_1^{T_i} \inte  \dive(D_m\Phi(m^i(t),y))m^i(t,dy) + \inte D_m\Phi(m^i(t),y)\cdot \alpha^i(t,y) m^i(dy) dt \\ 
& \qquad = \frac{1}{T_i-1} \left[\Phi(m^i(T_i))-\Phi(m^i(1))\right].  
\end{align*}
Letting $i\to+\infty$ gives 
\begin{align*}
& \int_{\mPT\times E}  \inte \dive(D_m\Phi(m,y)) +  D_m\Phi(m,y)\cdot \alpha(y) m(dy) \ \nu(dm,d\alpha)=0,
\end{align*}
which can be rewritten, in view of the definition of $p_1$, as 
\begin{align*}
& \int_{\mPT}  \inte \dive(D_m\Phi(m,y)) m(dy) \mu(dm) + \int_{\mPT\times \T^d}  D_m\Phi(m,y)\cdot p_1(dm,dy)=0.
\end{align*}
This proves that $(\mu,p_1)$ is closed. Next we come back to \eqref{kzaberzrsd}: using the convexity of $H^*$, we also have 
\begin{align}
-\lambda=&\int_{\mPT\times E} \left[\int_{\T^d} H^*(y, \alpha(y))m(dy) + {\mathcal F}(m)\right]\ \nu(dm,d\alpha)= \\
&\int_{\mPT}\int_E \left[\int_{\T^d} H^*(y, \alpha(y))m(dy) + {\mathcal F}(m)\right]\ \nu_m(d\alpha)\mu(dm)\geq\\
&\int_{\mPT} \left[\int_{\T^d} H^*\left(y, \int_E\alpha(y)\nu_m(d\alpha)\right)m(dy) + {\mathcal F}(m)\right]\ \mu(dm)=\\
&\int_{\mPT} \left[\int_{\T^d} H^*\left(y, \frac{dp_1}{dm\otimes \mu} (m,y)\right)m(dy) + {\mathcal F}(m)\right]\ \mu(dm).
\end{align}
Therefore,
\begin{equation}\label{12345}
\int_{\mPT} \left[\int_{\T^d} H^*\left(y, \frac{dp_1}{dm\otimes \mu} (m,y)\right)m(dy) + {\mathcal F}(m)\right]\ \mu(dm)\leq -\lambda,
\end{equation}
which proves the minimality of $(\mu, p_1)$. By the uniform convexity of $H$, relation \eqref{12345} shows also that, for $\mu-$a.e. $m\in \mPT$ and for $\nu_m-$a.e. $\alpha$, one has 
\begin{equation}\label{rep.dp1}
\frac{dp_1}{dm\otimes \mu} (m,y) =  \alpha(y).
\end{equation}

Note also that, $m^i(t)$ has a positive density for any $t\in [1,T_i]$ and there exists a constant $C>0$ independent of $i$ such that
\begin{equation}\label{estmt}
\sup_{t\in [1,T_i]} \|1/m^i(t,\cdot)\|_\infty+ \|Dm^i(t,\cdot)\|_\infty\leq C. 
\end{equation}
The bounds on $Dm^i$ are standard and we refer to \cite[Ch4, Theorem 5.1]{ladyzhenskaia1988linear}. While, for the estimates on $1/m^i$, we used the Harnack's inequality in \cite[Theorem 8.1.3]{bogachev2015fokker}. In our setting, this theorem states that, for any $x,y\in\T^d$ and for any $0<s<t<T_i$, there exists a constant $C_{t-s}$, depending only on $\vert t-s\vert$, such that
$$
m^i(t,x)\geq C_{t-s} m^i(s,y).
$$
As we work on the torus, for any $s>0$, there exists a point $y_s\in\T^d$ such that $m^i(s,y_s)\geq1$. Then, we can chose $s=t-1$ and we get that for any $t>1$
$$
m^i(t,x)\geq C_{1} m^i(t-1,y_{t-1})\geq C_1,
$$ 
which proves \eqref{estmt}.

The estimates in \eqref{estmt} ensure that the pair $(\mu,p_1)$ is smooth in the sense of Definition \ref{def.mup1smooth}. In particular, we know by Proposition \ref{eq.lem.mup1} that, for $\mu-$a.e. $m\in \mPT$, 
$$
-\int_{\T^d} q_1(y,m) \cdot Dm(y)dy +   \int_{ \T^d} H(y, q_1(y,m))m(dy) -{\mathcal F}(m)
=\lambda,
$$
where 
$$
q_1(x,m):= D_aH^*\left(y,  \frac{dp_1}{d m\otimes \mu} (y,m)\right). 
$$
By the convergence of $\nu^i$ to $\nu$, there exists (up to a subsequence again) $t_i\in [1,T_i]$ such that $(m^i(t_i),\alpha^i(t_i))$ converges to an element $(m,\alpha)\in \mPT\times E$ which belongs to the support of $\mu$. Then by \eqref{rep.dp1}, $\alpha= \frac{dp_1}{dm\otimes \mu}$. Thus 
$Du^i(t_i)= D_aH^*(y, \alpha^i(t_i))$ converges uniformly to $q_1(\cdot, m)$. This shows that 
\begin{align*}
&\lim_{i\to+\infty} c(u^{i},m^{i})\\
&=\lim_{i\rightarrow+\infty}-\int_{\T^d} Du^{i}(t_i,y)  \cdot Dm^{i}(t_i,y)dy +   \int_{ \T^d} H(y, Du^{i}(t_i,y) )m^{T}(t_i ,dy)- {\mathcal F}(m^{i}(t_i))\\
 & =-\int_{\T^d} q_1(y,m)  \cdot Dm(y)dy +   \int_{ \T^d} H(y, q_1(y,m) )m(dy)- {\mathcal F}(m) = \lambda,
\end{align*}
which is in contradiction with \eqref{ccontr}.
\end{proof}

The next step towards Theorem \ref{teo.xiconv} is to prove that the map $(s,m)\to {\mathcal U}^T(s,m)+\lambda (T-s)$ has a limit. 
In the next proposition we prove that $(s,m)\to {\mathcal U}^T(s,m)+\lambda (T-s)$ is bounded and equicontinuous on $[0,T]\times \mPT$ and so that there exists a subsequence  $({\mathcal U}^{T_n}+\lambda (T_n-\cdot))$ which, locally in time, converges uniformly to a continuous function $\xi$. 

\begin{Proposition}
The maps $(s,m)\to {\mathcal U}^T(s,m)+\lambda (T-s)$ are uniformly bounded and uniformly continuous.
\end{Proposition}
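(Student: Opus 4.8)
The plan is to establish the two properties in turn, making sure that every constant comes out independent of the horizon $T$ (these estimates will also feed the Arzel\`{a}--Ascoli argument announced afterwards). Since the Fokker--Planck constraint \eqref{fp} is autonomous, translating the time variable gives $\mathcal{U}^T(s,m)=\mathcal{U}^{T-s}(0,m)$, so it is enough to bound and to control the oscillation of $g(\tau,m):=\mathcal{U}^{\tau}(0,m)+\lambda\tau$ on $[0,+\infty)\times\mathcal{P}(\T^d)$. For the bound I would compare $g$ with a corrector $\chi$ (which exists and is Lipschitz, Proposition~\ref{remlip}). The programming identity \eqref{dynchi} reads $\chi(m_0)=\inf_{(m,\alpha)}\bigl(\int_0^\tau\inte H^*(x,\alpha)\,dm(s)+\mathcal{F}(m(s))\,ds+\chi(m(\tau))\bigr)+\lambda\tau$, the infimum being over the same admissible pairs as in $\mathcal{U}^{\tau}(0,m_0)$. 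Estimating $|\chi(m(\tau))|\le\|\chi\|_\infty$ and taking the infimum over $(m,\alpha)$ gives $\mathcal{U}^{\tau}(0,m_0)+\lambda\tau\ge\chi(m_0)-\|\chi\|_\infty$, while testing $\mathcal{U}^{\tau}(0,m_0)$ against an $\varepsilon$-minimizer of \eqref{dynchi} and letting $\varepsilon\to0$ gives $\mathcal{U}^{\tau}(0,m_0)+\lambda\tau\le\chi(m_0)+\|\chi\|_\infty$. Hence $|g(\tau,m)-\chi(m)|\le\|\chi\|_\infty$, so $(s,m)\mapsto\mathcal{U}^T(s,m)+\lambda(T-s)$ is bounded by $2\|\chi\|_\infty$ uniformly in $T,s,m$.

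The decisive point is to show that $m\mapsto\mathcal{U}^T(s,m)$ is Lipschitz for $\mathbf{d}$ with a constant independent of $s$ and $T$. Here I would use the potential structure, through the identity (up to an additive constant) $\tfrac{\delta\mathcal{U}^T}{\delta m}(s,m,\cdot)=u^T(s,\cdot)$, where $(u^T,m^T)$ solves the MFG system \eqref{psystem} on $[s,T]$ with $m^T(s)=m$ --- i.e. the value function of an optimal representative agent plays the role of the adjoint state. Writing $\mathcal{U}^T(s,m_1)-\mathcal{U}^T(s,m_0)$ as the integral over $\theta\in[0,1]$ of $\inte\tfrac{\delta\mathcal{U}^T}{\delta m}(s,(1-\theta)m_0+\theta m_1,y)\,(m_1-m_0)(dy)$, and using that by Lemma~\ref{2est} the gradient bound $\|Du^T\|_{L^\infty([s,T]\times\T^d)}\le C$ holds with $C$ independent of the initial datum, of $s$ and of $T$, the Kantorovich--Rubinstein formula \eqref{RKd2} yields $|\mathcal{U}^T(s,m_0)-\mathcal{U}^T(s,m_1)|\le C\,\mathbf{d}(m_0,m_1)$.

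Finally, the oscillation of $g$ in time, uniformly in $m$ and $T$, follows from the dynamic programming principle for $\mathcal{U}^T$: for $0\le s<s'\le T$ with $h:=s'-s\le1$, the solution of \eqref{fp} with $\alpha=0$ and $m(s)=m_0$ is an admissible competitor on $[s,s']$ with running cost $\le Ch$ and with $m(s')$ at $\mathbf{d}$-distance $\le Ch^{1/2}$ of $m_0$, so $\mathcal{U}^T(s,m_0)\le Ch+\mathcal{U}^T(s',m(s'))$; conversely, the restriction to $[s,s']$ of an optimal trajectory for $\mathcal{U}^T(s,m_0)$ has, by Lemma~\ref{2est}, its endpoint at distance $\le Ch^{1/2}$ of $m_0$ and running cost $\ge -Ch$ (the integrand is bounded below because $H^*(x,\cdot)\ge -H(x,0)$ and $\mathcal{F}$ is bounded, and above because $\|\alpha\|_\infty\le C$ along optimal trajectories), so $\mathcal{U}^T(s,m_0)\ge -Ch+\mathcal{U}^T(s',\tilde m(s'))$ for the corresponding endpoint $\tilde m(s')$. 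Replacing $\mathcal{U}^T(s',m(s'))$ and $\mathcal{U}^T(s',\tilde m(s'))$ by $\mathcal{U}^T(s',m_0)$ up to an error $Ch^{1/2}$ via the Lipschitz estimate of the previous paragraph, we get $|\mathcal{U}^T(s,m_0)-\mathcal{U}^T(s',m_0)|\le Ch^{1/2}$, hence $|g(s,m_0)-g(s',m_0)|\le Ch^{1/2}+|\lambda|h\le Ch^{1/2}$; combined with the Lipschitz bound in $m$ this gives $|g(s,m)-g(s',m')|\le C\mathbf{d}(m,m')+C|s-s'|^{1/2}$ for $|s-s'|\le1$, which is the claimed uniform continuity. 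I expect the real obstacle to be the $T$-uniformity of the Lipschitz-in-$m$ estimate: a soft compactness argument is not enough, and one is forced to use the potential structure together with the horizon-independent regularity bounds of Lemma~\ref{2est}.
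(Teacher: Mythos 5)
Your architecture matches the paper's: uniform boundedness by comparison with a Lipschitz corrector $\chi$ through \eqref{dynchi}, then a $T$- and $s$-uniform Lipschitz estimate in $m$, then H\"older continuity in time obtained by combining the dynamic programming principle, the estimate $\mathbf{d}(m(s),m(l))\le C|s-l|^{1/2}$ of Lemma~\ref{2est}, and the Lipschitz-in-$m$ bound. The boundedness and time-continuity steps coincide with the paper's proof (the $\alpha=0$ competitor you introduce is superfluous: restricting the optimal trajectory to $[s,s']$ already gives an equality via the DPP, from which both one-sided bounds follow).

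The one point that deserves scrutiny is the Lipschitz-in-$m$ step, which you correctly identify as the crux. The paper delegates it to \cite[Theorem 1.5]{masoero2019}. Your proposed route --- the envelope identity $\tfrac{\delta\mathcal{U}^T}{\delta m}(s,m,\cdot)=u^T(s,\cdot)$ and then the fundamental theorem of calculus along the segment $(1-\theta)m_0+\theta m_1$ --- is the correct heuristic but is circular as written: the formula
$\mathcal{U}^T(s,m_1)-\mathcal{U}^T(s,m_0)=\int_0^1\inte\tfrac{\delta\mathcal{U}^T}{\delta m}\,(m_1-m_0)\,d\theta$
presupposes that $\mathcal{U}^T(s,\cdot)$ is differentiable along the segment, which is precisely the kind of regularity you are trying to establish; a value function is in general not $C^1$ at measures admitting several minimizers. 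The non-circular argument (the one behind \cite{masoero2019}) is a direct trajectory comparison: take the optimal pair from $m_0$ with feedback $\alpha^T(t,x)=D_pH(x,Du^T(t,x))$, which is bounded and spatially Lipschitz uniformly in $T$ and $s$ by Lemma~\ref{2est}; run the Fokker--Planck equation from $m_1$ with this same feedback to produce a competitor whose Wasserstein distance to the optimal flow, and whose running cost, differ by $O(\mathbf{d}(m_0,m_1))$; this gives $\mathcal{U}^T(s,m_1)\le\mathcal{U}^T(s,m_0)+C\,\mathbf{d}(m_0,m_1)$, and symmetry does the rest. That reaches the same conclusion you state, using the same horizon-independent bounds on $Du^T$, but without assuming differentiability of $\mathcal{U}^T$.
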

\begin{proof}
	We first prove that $(s,m)\to {\mathcal U}^T(s,m)+\lambda (T-s)$ is bounded, uniformly in $T$. Let $\chi$ be a corrector function. As $\chi$ is a continuous function on the compact set $\mathcal P(\T^{d})$, there exists a constant $C>0$ such that $0\leq \chi(m)+C$ for any $m\in\mathcal P(\T^{d})$. If $(m(t),w(t))$ is an admissible trajectory for the minimization problem of ${\mathcal U}^T(s,m)$, then
	$$
	\int_s^T\int_{\T^d}H^*\left(x,\alpha(t,x)\right)dm(t)+\mathcal F(m(t))dt+\lambda (T-s)
	$$
	$$
	\leq\int_s^T\int_{\T^d}H^*\left(x,\alpha(t,x)\right)dm(t)+\mathcal F(m(t))dt+\chi(m(T))+\lambda (T-s)+C.
	$$
	Taking the infimum over all the possible $(m,\alpha)$, the definition of $\mathcal U^T(s,m)$ and the dynamic programming principle verified by $\chi$ lead to
	$$
	\mathcal U^T(s,m)+\lambda (T-s)\leq \chi(m)+ C.
	$$
	As $\chi$ is bounded, we get an upper bound independent of $T$, $m$ and $s$. The lower bound is analogous.
	
	We turn to the equicontinuity.  For what concern the continuity in the $m$ variable, one can adapt the proof of \cite[Theorem 1.5]{masoero2019} with minor adjustments, to show that, if $T-s\geq \varepsilon>0$ for given $\varepsilon>0$, then there exits a constant $K$ independent of $T$ and $s$ such that $\mathcal U^T(s,\cdot)$ is $K$-Lipschitz continuous.

	 We now need to estimate $\vert\mathcal U^{T}(t_2,m_0)-\mathcal U^T(t_1,m_0)\vert$. We suppose $t_2>t_1$ and we fix $(\bar m(s),\bar \alpha(s))$ an optimal trajectory for $\mathcal U^T(t_1,m_0)$, then
	 \begin{equation}\label{uniftimeUt}
	 \vert\mathcal U^{T}(t_2,m_0)-\mathcal U^T(t_1,m_0)\vert\leq \vert\mathcal U^{T}(t_2,m_0)-\mathcal U^T(t_2,\bar m(t_2))\vert+\vert\mathcal U^{T}(t_2,\bar m(t_2))-\mathcal U^T(t_1,m_0)\vert.
	 \end{equation}
	We can estimate the first term on the right hand-side using at first the uniform Lipschitz continuity we discussed before and then the estimates on the solution of the MFG system in Lemma \ref{2est}. So,
	\begin{equation}\label{halfUtreg}
	\vert\mathcal U^{T}(t_2,m_0)-\mathcal U^T(t_2,\bar m(t_2))\vert\leq K \mathbf{d}(m_0,\bar m(t_2))\leq KC\vert t_1-t_2\vert^{\frac{1}{2}}.
	\end{equation}
	To estimate the second term in the right hand-side of \eqref{uniftimeUt}, we just need to use that
	$$
    {\mathcal U}^T(t_1,m_1)=\inf_{(m,\alpha)}\left\{\int_{t_1}^{t_2}\int_{\T^d}H^*\left(x,\alpha(t,x)\right)dm(t)+{\mathcal F}(m(t))dt+{\mathcal U}^{T}(t_2,m(t_2))\right\}.
	$$
	As $(\bar m,\bar \alpha)$ is optimal for ${\mathcal U}^T(t_1,m_1)$, we get
	$$
	{\mathcal U}^T(t_1,m_1)-{\mathcal U}^{T}(t_2,\bar m(t_2))=\int_{t_1}^{t_2}\int_{\T^d}H^*\left(x,\bar\alpha(t,x)\right)d\bar m(t)+{\mathcal F}(\bar m(t))dt.
	$$
	Note that, $\bar\alpha(t,x)=D_pH(x,D\bar u(x,t))$ where $(\bar m,\bar u)$ solves the MFG system \eqref{psystem} and, according to Lemma \ref{2est}, we have uniform estimates on $\bar u$. Therefore,
	$$
	\vert\mathcal U^{T}(t_2,\bar m(t_2))-\mathcal U^T(t_1,m_0)\vert\leq\int_{t_1}^{t_2}\left\vert\int_{\T^d}H^*\left(x,\bar\alpha(t,x)\right)d\bar m(t)+{\mathcal F}(\bar m(t))\right\vert dt\leq C(t_2-t_1).
	$$
	Putting together the last inequality with \eqref{halfUtreg} we have that, for a possibly different constant $C>0$, independent of $T$, $m_0$ and $m_1$,
	$$
	\vert\mathcal U^{T}(t_2,m_0)-\mathcal U^T(t_1,m_0)\vert\leq C(\vert t_1-t_2\vert^{\frac{1}{2}}+\vert t_1-t_2\vert),
	$$
	which, in turn, implies the uniform continuity in time. 
%	
%	As $(s,m)\to {\mathcal U}^T(s,m)+\lambda (T-s)$ is uniformly bounded and uniformly continuous, the Arzeli-Ascola theorem gives us the result. (Note that, the uniform continuity in time is on $[0,T-\varepsilon]$ and not on $[0,T]$ but, as we are looking at $T\rightarrow+\infty$, it does not change the result).
\end{proof}
From now on we fix a continuous map $\xi:[0,+\infty)\times \Pw\to \R$, limit of a subsequence (denoted in the same way) of the sequence $({\mathcal U}^T+\lambda (T-\cdot))$ as $T\to+\infty$.
\begin{Proposition}\label{domination} The map $\xi_0(\cdot):= \xi(0,\cdot)$ satisfies 
	$$
	\xi_0(m)\leq \inf_{(m,\alpha)}\left\{ \int_0^t \left(\int_{\T^d} H^*\left(y, \alpha(s,y)\right)m(s,y)dy + {\mathcal F}(m(s))\right)\ ds +\xi_0(m(t)) \right\}+\lambda t. 
	$$
\end{Proposition}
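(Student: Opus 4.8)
The plan is to pass to the limit, along the subsequence $T_n\to+\infty$ defining $\xi$, in the dynamic programming principle for $\mathcal U^{T_n}$. Fix $m_0\in\Pw$, $t>0$ and an admissible pair $(m,\alpha)$ on $[0,t]$ with $m(0)=m_0$, and abbreviate $L[m,\alpha](s):=\int_{\T^d}H^*(y,\alpha(s,y))m(s,\mathrm dy)+\mathcal F(m(s))$. The dynamic programming principle gives $\mathcal U^{T_n}(0,m_0)\le\int_0^t L[m,\alpha](s)\,\mathrm ds+\mathcal U^{T_n}(t,m(t))$. Since the running cost does not depend on time one has $\mathcal U^{T_n}(t,\cdot)=\mathcal U^{T_n-t}(0,\cdot)$; adding $\lambda T_n=\lambda(T_n-t)+\lambda t$ to both sides yields
\[
\mathcal U^{T_n}(0,m_0)+\lambda T_n\ \le\ \int_0^t L[m,\alpha](s)\,\mathrm ds\ +\ \bigl[\mathcal U^{T_n-t}(0,m(t))+\lambda(T_n-t)\bigr]\ +\ \lambda t .
\]
The left-hand side converges to $\xi_0(m_0)=\xi(0,m_0)$. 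So, if I can show that the bracket converges to $\xi_0(m(t))$, then letting $n\to\infty$ and afterwards taking the infimum over all admissible $(m,\alpha)$ will give the claim.

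The bracket equals $\mathcal U^{T_n}(t,m(t))+\lambda(T_n-t)$, which by the very definition of $\xi$ converges to $\xi(t,m(t))$. Hence the real issue is to show that $\xi$ does not depend on its time variable, i.e. $\xi(t,\cdot)\equiv\xi_0(\cdot)$ — equivalently, that shifting the horizon from $T_n$ to $T_n-t$ is asymptotically harmless. This is exactly where Proposition \ref{prop.limcT} is used. A standard computation, combining the dynamic programming principle with the conservation of the energy $c(u^\tau,m^\tau)$ of Lemma \ref{lem.Energie}, gives $\frac{\mathrm d}{\mathrm d\tau}\mathcal U^\tau(0,m')=-c(u^\tau,m^\tau)$, where $(u^\tau,m^\tau)$ is the optimal MFG solution of $\mathcal U^\tau(0,m')$; therefore, for every $m'$,
\[
\bigl[\mathcal U^{T_n-t}(0,m')+\lambda(T_n-t)\bigr]-\bigl[\mathcal U^{T_n}(0,m')+\lambda T_n\bigr]=\int_{T_n-t}^{T_n}\bigl(c(u^\tau,m^\tau)-\lambda\bigr)\,\mathrm d\tau .
\]
By Proposition \ref{prop.limcT} the integrand tends to $0$ as $\tau\to+\infty$, uniformly in $m'$; since the interval of integration has fixed length $t$ and its endpoints go to $+\infty$, the right-hand side tends to $0$. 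Applied with $m'=m(t)$ this shows $\mathcal U^{T_n-t}(0,m(t))+\lambda(T_n-t)\to\xi_0(m(t))$, which is the convergence of the bracket needed above (and, in passing, $\xi(t,\cdot)=\xi_0(\cdot)$).

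The only genuine difficulty is precisely this horizon-shift step. A priori $\xi$ could depend on $s$: the value functions are only H\"older-$\frac12$ in time, uniformly in the horizon and in the base measure, because parabolic smoothing forces $\mathbf d(m(s),m(0))\le C\sqrt s$ along trajectories (Lemma \ref{2est}), so any crude comparison of $\mathcal U^{T_n}(t,\cdot)$ with $\mathcal U^{T_n}(0,\cdot)$ leaves an $O(\sqrt t)$ error that does not vanish as $n\to\infty$. The sharper information that the running energy $c(u^\tau,m^\tau)$ converges to $\lambda$ uniformly in the initial measure — the content of Proposition \ref{prop.limcT} — is what removes this error. Everything else is routine: the dynamic programming principle for $\mathcal U^T$, the rescaling $\mathcal U^T(t,\cdot)=\mathcal U^{T-t}(0,\cdot)$, and passing to the limit in one inequality; note in particular that only the inequality $\le$ is needed here, so no compactness of near-optimal trajectories (as in the proof of Proposition \ref{prop.limcT}) is required.
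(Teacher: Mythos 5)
Your strategy correctly isolates the crux — showing that the time-shift $T_n\mapsto T_n-t$ is asymptotically harmless — and it relies on the same two engines as the paper, namely Lemma \ref{lem.Energie} and the uniform convergence $c(u^\tau,m^\tau)\to\lambda$ of Proposition \ref{prop.limcT}. But the step you flag as "a standard computation", the identity $\frac{\mathrm d}{\mathrm d\tau}\mathcal U^\tau(0,m')=-c(u^\tau,m^\tau)$, is not established and is genuinely problematic in this setting. First, in the non-monotone case (which is precisely the case of interest here) minimizers of $\mathcal U^\tau(0,m')$ need not be unique, so "the optimal MFG solution $(u^\tau,m^\tau)$" is ill-defined, and a priori two minimizers could have different energies $c$; the envelope-theorem heuristic behind your formula would then at best give one-sided inequalities involving $\inf$ and $\sup$ over minimizers. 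Second, even granting a measurable selection, differentiability of $\tau\mapsto\mathcal U^\tau(0,m')$ is not automatic, and identifying the a.e.\ derivative requires controlling $\int_{\tau-h}^{\tau}L[m^\tau,\alpha^\tau]\,\mathrm ds - (-c\,h)$ at order $o(h)$, i.e.\ a modulus of continuity in $s$ for the running cost near the terminal time. The estimates actually at hand (Lemma \ref{2est}) give $\mathbf d(m^T(s),m^T(l))\le C|s-l|^{1/2}$ and $|\partial_t u^T|\le C$, but not a modulus for $s\mapsto Du^T(s,\cdot)$, so this error term is not obviously $o(h)$ uniformly in $\tau$.

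Note also that the exact identity is more than you need: for the proposition only the inequality $\xi(t,\cdot)\le\xi(0,\cdot)$ is required, which after integration would follow from the one-sided bound $\frac{\mathrm d}{\mathrm d\tau}\mathcal U^\tau(0,m')\ge-c(u^\tau,m^\tau)$ — precisely the direction that the supersolution statement of Lemma \ref{lem.supsol} would give. The paper exploits this asymmetry: rather than integrating a derivative along $\tau$, it shows that $\xi$ is a viscosity supersolution of $-\partial_t\xi\ge0$ by applying Lemma \ref{lem.supsol} (plus Lemma \ref{lem.Energie} and Proposition \ref{prop.limcT}) at touching points of the approximating $\mathcal U^{T_n}+\lambda(T_n-\cdot)$, and then converts this viscosity inequality into genuine monotonicity of $\xi$ via a penalization argument with a smooth test function $\Psi$ attaining a strict minimum. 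That route needs only one-sided information, handles non-uniqueness of minimizers automatically, and avoids the terminal-layer estimate. So: same ingredients, but the missing derivative identity is a real gap, and the repair you would need (two-sided trajectory extension/restriction estimates with controlled error) amounts to doing the viscosity-solution work in different clothing.
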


\begin{proof} We first claim that $\xi$ is a viscosity solution to 
	\begin{equation}\label{eq.noninc}
	-\partial_t \xi \geq 0\qquad {\rm in}\; [0,+\infty)\times \Pw. 
	\end{equation}
	Let $\Phi=\Phi(t,m)$ be a smooth test function such that $\xi\geq \Phi$ with an equality only at $(t_0, m_0)$. Then there exists a subsequence $(t_n, \bar m_n)$ converging to $(t_0,m_0)$ and such that ${\mathcal U}^{T_n}+ \lambda (T_n-s)-\Phi$ has a minimum at $(t_n, \bar m_n)$. Let $(m_n,\alpha_n)$ be a minimizer for ${\mathcal U}^{T_n}(t_n,\bar m_n)$. We consider $u_n\in C^{1,2}([t_n,T_n]\times\T^d)$ such that $(u_n,m_n)$ is a solution to the MFG system \eqref{psystem} and $\alpha_n=D_pH(x,Du_n)$. Then, by Lemma \ref{lem.supsol}, we have 
	$$
	-\partial_t \Phi(t_n,\bar m_n)-\lambda + \inte (H(y, Du_n(t_n,y))-\Delta u_n(t_n,y)) \bar m_n(dy) -{\mathcal F}(\bar m_n)\geq 0. 
	$$
	By Lemma \ref{lem.Energie} and  Proposition \ref{prop.limcT}, we have, given $\ep>0$,  
	$$
	\inte (H(y, D u_n(t_n,y))- \Delta  u_n(t_n,y)) \bar m_n(dy) -{\mathcal F}(\bar m_n)\leq \lambda+\ep,
	$$
	for $n$ large enough. So 
	$$
	-\partial_t \Phi(t_n,\bar  m_n)\geq - \ep.
	$$
	We obtain therefore, after letting $n\to+\infty$ and then $\ep\to 0$,  
	$$
	-\partial_t \Phi(t_0,m_0)\geq 0. 
	$$
	This shows that $\xi$ satisfies \eqref{eq.noninc} holds in the viscosity solution sense. 
	
	We now prove that \eqref{eq.noninc} implies that $\xi$ is nonincreasing in time. Fix $m_0\in \Pw$ and assume on the contrary that there exists $0\leq t_1< t_2$ such that $\xi(t_1,m_0)<\xi(t_2,m_0)$. Let $\Psi=\Phi(m)$ be a smooth test function such that $\Psi> 0$ on $\Pw\backslash \{m_0\}$ with $\Phi(m_0)=0$. Then, we can find $\eta>0$ small such that, if $m_1$ and $m_2$ are such that $\xi(t_1,\cdot)+\eta^{-1}\Psi$ has a minimum at $m_1$ and 
	$\xi(t_2,\cdot)+\eta^{-1}\Psi$ has a minimum at $m_2$, then $\xi(t_1,m_1)<\xi(t_2,m_2)$. Note that this implies that
	\begin{align*}
	& \min_{m\in \Pw} \xi(t_1,m)+\eta^{-1}\Psi(m)= \xi(t_1,m_1)+\eta^{-1}\Psi(m_1) \\
	& \qquad  < \ \xi(t_2,m_2)+\eta^{-1}\Psi(m_2)= \min_{m\in \Pw} \xi(t_2,m)+\eta^{-1}\Psi(m). 
	\end{align*}
	Recalling that $\xi$ is bounded, this implies that we can find $\ep>0$ small such that the map $(t,m)\to \xi(t,m)+\eta^{-1}\Psi(m)+ \ep t$ has an interior minimum on $[t_1, +\infty)\times \Pw$ at some point $(t_3,m_3)\in (t_1,+\infty)\times \Pw$. This contradicts \eqref{eq.noninc}.

	Now that we have proved the monotonicity in time of $\xi$ we can finally show the statement of the proposition. We have that $\mathcal U^T$ verifies the following dynamic programming principle
	\begin{equation}\label{Utdpp}
	{\mathcal U}^T(0,m_0)=\inf_{(m,\alpha)}\left\{\int_0^t\int_{\T^d}H^*\left(x,\alpha(t,x)\right)dm(t)+{\mathcal F}(m(t))dt+{\mathcal U}^{T}(t,m(t))\right\}.
	\end{equation}
	Then, adding on both sides $\lambda T$ and passing to the limit $T\rightarrow+\infty$, one easily checks that $\xi$ satisfies
	$$
	\xi(0,m)= \inf_{(m,\alpha)} \left\{  \int_0^t \left(\int_{\T^d} H^*\left(y, \alpha(s,y)\right)m(s,y)dy + {\mathcal F}(m(s))\right)\ ds +\xi(t,m(t)) \right\} +\lambda t. 
	$$
	Using the fact that $\xi$ is nonincreasing in time we get the desired result. 
\end{proof}

Before we can prove that $\xi_0$ is a corrector function we need to state some standard properties of $\tau_h:C^0(\mathcal P(\T^d))\rightarrow C^0(\mathcal P(\T^d))$ which is defined as follows. For $h>0$ and $\Phi\in C^0\T^d)$ we set 
$$
\tau_h\Phi(m_0)= \inf_{(m,\alpha)} \left\{  \int_0^h \left(\int_{\T^d} H^*\left(y, \alpha(s,y)\right)m(s,y)dy + {\mathcal F}(m(s))\right)\ ds +\Phi(m(h)) \right\} +\lambda h,
$$
where $(m,\alpha)$ solves in the sense of distribution
$$
\begin{cases}
-\partial_t m+\Delta m+\dive (m\alpha)=0&\mbox{ in }[0,h]\times\T^d\\
m(0)=m_0.
\end{cases}
$$
\begin{Lemma}\label{LOprop}
	The function $\tau_h$ verifies the following properties
	\begin{enumerate}
		\item For any $h_1$, $h_2>0$, $\tau_{h_1}\circ\tau_{h_2}=\tau_{h_1+h_2}$ 
		\item For any $h>0$, $\tau_h$ is not expansive, i.e. for any $\Phi$, $\Psi\in C^0\mathcal P(\T^d))$
		$$
		\Vert \tau_h\Phi-\tau_h\Psi\Vert_\infty\leq\Vert \Phi-\Psi\Vert_\infty.
		$$
		\item For any $h>0$, $\tau_h$ is order preserving, i.e. for any $\Phi$, $\Psi\in C^0\mathcal P(\T^d))$ such that $\Phi\leq\Psi$,
		$$
		\tau_h\Phi\leq\tau_h\Psi.
		$$
		\item Let $\Phi\in C^0\mathcal P(\T^d))$ be such that, for any $h>0$, $\Phi\leq\tau_h\Phi$. Then, for any $0<h_1<h_2$,
		$$
		\Phi\leq \tau_{h_1}\Phi\leq\tau_{h_2}\Phi.
		$$
	\end{enumerate}
\begin{proof} The proof is standard, see for instance, in a closely related context, \cite{fathi2008weak}.
\end{proof}
\end{Lemma}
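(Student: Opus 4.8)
The plan is to verify the four properties in order, using only the dynamic programming structure of $\tau_h$ together with the linearity of the Fokker--Planck constraint and the convexity of the running cost; none of them requires a new idea beyond what is classical for Lax--Oleinik semigroups (see \cite{fathi2008weak}). Throughout, write $L(m,\alpha):=\int_{\T^d}H^*(y,\alpha(y))\,m(dy)+\mathcal F(m)$ for the running cost, so that $\tau_h\Phi(m_0)=\inf_{(m,\alpha)}\{\int_0^h L(m(s),\alpha(s))\,ds+\Phi(m(h))\}+\lambda h$.

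For the semigroup property (1), I would argue by a concatenation/restriction argument. Given $m_0$ and an admissible pair $(m,\alpha)$ on $[0,h_1]$ together with an admissible pair $(m',\alpha')$ on $[0,h_2]$ with $m'(0)=m(h_1)$, the time-concatenation of the two curves is admissible on $[0,h_1+h_2]$: continuity in $\mathcal P(\T^d)$ at the junction holds since $m(h_1)=m'(0)$, the Fokker--Planck equation holds in the sense of distributions across $t=h_1$ because the curve is continuous in time and the equation is linear in $m$ and affine in the drift, and the $L^2_m$-integrability of the drift is additive over the two time intervals. Its cost is the sum of the two costs plus $\lambda(h_1+h_2)$, so optimizing first over $(m',\alpha')$ and then over $(m,\alpha)$ yields $\tau_{h_1+h_2}\Phi(m_0)\leq \tau_{h_1}(\tau_{h_2}\Phi)(m_0)$. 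Conversely, any admissible pair on $[0,h_1+h_2]$ restricts to an admissible pair on $[0,h_1]$ and, after the shift $t\mapsto t-h_1$, to one on $[0,h_2]$ starting from $m(h_1)$; plugging this splitting into the definitions gives the reverse inequality, hence equality.

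Properties (2) and (3) are then immediate from the definition. For (3): if $\Phi\leq\Psi$, then for every admissible pair the bracket defining $\tau_h\Phi(m_0)$ is pointwise dominated by the one defining $\tau_h\Psi(m_0)$, so the infima are ordered, giving $\tau_h\Phi\leq\tau_h\Psi$. For (2): fix $\varepsilon>0$ and choose an admissible pair $(m,\alpha)$ that is $\varepsilon$-optimal for $\tau_h\Psi(m_0)$; then
\[
\tau_h\Phi(m_0)\leq \int_0^h L(m(s),\alpha(s))\,ds+\Phi(m(h))+\lambda h\leq \tau_h\Psi(m_0)+\varepsilon+\|\Phi-\Psi\|_\infty,
\]
and letting $\varepsilon\to0$ and exchanging the roles of $\Phi$ and $\Psi$ gives $\|\tau_h\Phi-\tau_h\Psi\|_\infty\leq\|\Phi-\Psi\|_\infty$.

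Finally, (4) follows by combining (1) and (3). If $\Phi\leq\tau_h\Phi$ for all $h>0$, then for $0<h_1<h_2$ I apply (3) to the inequality $\Phi\leq\tau_{h_2-h_1}\Phi$ and then (1) to obtain
\[
\tau_{h_1}\Phi\leq \tau_{h_1}(\tau_{h_2-h_1}\Phi)=\tau_{h_2}\Phi,
\]
while $\Phi\leq\tau_{h_1}\Phi$ is the hypothesis. The only genuinely delicate point in the whole argument is the verification, in step (1), that the concatenation of two distributional solutions of the Fokker--Planck equation agreeing at the matching time is again a distributional solution on the full interval, together with the preservation of admissibility of the drift; this is standard but should be checked against the precise notion of admissible trajectory used in the definition of $\mathcal U^T$.
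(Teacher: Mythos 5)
Your proof is correct and follows the standard Lax--Oleinik semigroup argument that the paper delegates to \cite{fathi2008weak}: the semigroup property from concatenation/restriction of admissible trajectories, non-expansiveness and order-preservation directly from the infimum structure, and the monotonicity property (4) by combining (1) and (3). The one point you rightly flag as needing care --- that gluing two distributional solutions of the Fokker--Planck equation that match at the junction time again yields a distributional solution --- is indeed standard (the boundary terms at $t=h_1$ in the integration by parts cancel because of the matching condition), and you have identified exactly where it is used.
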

\begin{Theorem}\label{teo.xiconv} $\xi_0$ is a corrector and ${\mathcal U}^T(0)+\lambda T$ converges uniformly to $\xi_0$ on $\mPT$. 
\end{Theorem}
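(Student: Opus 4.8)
The plan is to run the classical weak KAM convergence argument of Fathi \cite{fathi1998convergence}, now at the level of the nonlinear semigroup $(\tau_h)_{h>0}$ introduced before Lemma \ref{LOprop}. Set $u_T:=\mathcal U^T(0,\cdot)+\lambda T$. Since the problem \eqref{VlI}--\eqref{fp} is autonomous, time translation gives $\mathcal U^{T+h}(h,\cdot)=\mathcal U^T(0,\cdot)$, and combining this with the dynamic programming principle \eqref{Utdpp} one checks the semigroup identity $u_{T+h}=\tau_h u_T$ for all $T,h>0$. By the previous Proposition the family $\{u_T\}_{T\ge 1}$ is uniformly bounded and equicontinuous on the compact space $\mPT$, hence precompact in $C^0(\mPT)$; let $\Omega\subset C^0(\mPT)$ denote the (nonempty, compact) set of its uniform limit points as $T\to+\infty$. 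It then suffices to prove that $\Omega$ is a single point, for then $u_T$ converges uniformly to that point, which must be $\xi_0$ since $\xi_0\in\Omega$ by construction. Note also that, $\tau_h$ being nonexpansive (Lemma \ref{LOprop}(2)) hence continuous on $C^0(\mPT)$, the identity $\tau_h u_T=u_{T+h}$ yields $\tau_h(\Omega)\subset\Omega$ for every $h>0$.

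The next step is to exhibit a corrector inside $\Omega$. By Proposition \ref{domination} we have $\xi_0\le\tau_h\xi_0$ for every $h>0$, so Lemma \ref{LOprop}(4) shows that $h\mapsto\tau_h\xi_0$ is nondecreasing. Moreover $\tau_h\xi_0\in\Omega$ for every $h>0$ (since $\xi_0\in\Omega$ and $\tau_h(\Omega)\subset\Omega$), so this curve takes values in the compact, equicontinuous set $\Omega$; being also monotone, it converges uniformly as $h\to+\infty$ to some $\chi\in\Omega$. For any $s>0$, using the semigroup property (Lemma \ref{LOprop}(1)) and the continuity of $\tau_s$, $\tau_s\chi=\tau_s\big(\lim_{h\to+\infty}\tau_h\xi_0\big)=\lim_{h\to+\infty}\tau_{s+h}\xi_0=\chi$, so $\chi$ is a fixed point of all the $\tau_s$, i.e.\ a corrector, and $\chi\in\Omega$.

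It remains to show $\Omega=\{\chi\}$. Let $v\in\Omega$ be arbitrary. Pick sequences $\sigma_n\to+\infty$ with $u_{\sigma_n}\to\chi$ uniformly and $\theta_n\to+\infty$ with $\theta_n>\sigma_n$ and $u_{\theta_n}\to v$ uniformly (possible since both $\chi$ and $v$ are uniform limit points along sequences of horizons tending to $+\infty$). By the semigroup identity $u_{\theta_n}=\tau_{\theta_n-\sigma_n}u_{\sigma_n}$, while $\tau_{\theta_n-\sigma_n}\chi=\chi$ because $\chi$ is a corrector; hence, by nonexpansiveness,
$$\|u_{\theta_n}-\chi\|_\infty=\|\tau_{\theta_n-\sigma_n}u_{\sigma_n}-\tau_{\theta_n-\sigma_n}\chi\|_\infty\le\|u_{\sigma_n}-\chi\|_\infty\to 0 \quad\text{as }n\to+\infty,$$
so $v=\chi$. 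Therefore $\Omega=\{\chi\}$, $\mathcal U^T(0,\cdot)+\lambda T=u_T\to\chi$ uniformly, and $\xi_0=\chi$ is a corrector. The only points requiring care are the routine ones: verifying the autonomous semigroup identity $u_{T+h}=\tau_h u_T$ from \eqref{Utdpp}, and upgrading the monotone pointwise convergence of $h\mapsto\tau_h\xi_0$ to uniform convergence via the equicontinuity inherited from the compact set $\Omega$. Everything else is a direct consequence of Lemma \ref{LOprop} and Proposition \ref{domination}; in effect, all the real difficulty has already been absorbed into the identification $I=\lambda$ and into Proposition \ref{domination}, and what is left is the soft semigroup bookkeeping above.
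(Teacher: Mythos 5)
Your proof is correct and follows essentially the same strategy as the paper's, which itself is modeled on Fathi's argument. Both rely on the same four ingredients: the semigroup identity $\widetilde{\mathcal U}^{T+h}(0,\cdot)=\tau_h\widetilde{\mathcal U}^{T}(0,\cdot)$, nonexpansiveness and order-preservation of $\tau_h$ (Lemma \ref{LOprop}), the one-sided inequality $\xi_0\le\tau_h\xi_0$ from Proposition \ref{domination}, and equicontinuity/compactness from the preceding Proposition. The only difference is cosmetic: you introduce the $\omega$-limit set $\Omega$, extract the corrector $\chi$ as the monotone limit of $\tau_h\xi_0$, and then show $\Omega=\{\chi\}$ by nonexpansiveness; the paper instead directly squeezes $\xi_0\le\tau_s\xi_0\le\tau_{s_n}\xi_0\to\xi_0$ along a subsequence with $s_n=T_{n+1}-T_n\to+\infty$, obtaining $\tau_s\xi_0=\xi_0$ at once, and then gets full convergence by $\|\tau_{T-T_n}\widetilde{\mathcal U}^{T_n}(0)-\tau_{T-T_n}\xi_0\|_\infty\le\|\widetilde{\mathcal U}^{T_n}(0)-\xi_0\|_\infty\to 0$. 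Both routes are equivalent in difficulty; the paper's squeeze is marginally more direct, while yours makes the omega-limit-set structure explicit.
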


\begin{proof} The proof follows closely the one of \cite[Theorem 6.3.1]{fathi2008weak}. We define $\widetilde{\mathcal U}^T(t,m)={\mathcal U}^{T}(t,m)+\lambda (T-t)$. Let $T_n\rightarrow+\infty$ be a sequence such that $\widetilde{\mathcal U}^{T_n}$ converges locally uniformly to $\xi$ on $[0,+\infty)\times \mPT$. We can suppose that, if we define $s_n=T_{n+1}-T_n$, then $s_n\rightarrow+\infty$. Note that $ \widetilde{\mathcal U}^{T_{n+1}}(s_n,m) = \widetilde{\mathcal U}^{T_n}(0,m)$. Then, using \eqref{Utdpp}, we get
	$$
	\widetilde{\mathcal U}^{T_{n+1}}(0,m)=\tau_{s_n} \widetilde{\mathcal U}^{T_{n+1}}(s_n,m) = \tau_{s_n}\widetilde{\mathcal U}^{T_n}(0,m).
	$$
	 We also know from Lemma \ref{LOprop} that $\tau_h$ is a contraction and that it verifies the semigroup property.
 Therefore, 
\begin{align*}
\| \tau_{s_n}\xi_0-\xi_0\|_\infty &  \leq \| \tau_{s_n} \xi_0-\tau_{s_n} \widetilde{\mathcal U}^{T_n}(0)\|_\infty+ 
\|\tau_{s_n} \widetilde{\mathcal U}^{T_n}(0)- \xi_0\|_\infty\\
& \leq  \| \xi_0-\widetilde{\mathcal U}^{T_n}(0)\|_\infty+ 
\|\widetilde{\mathcal U}^{T_{n+1}}(0)- \xi_0\|_\infty \to 0, 
\end{align*}

Moreover, Proposition \ref{domination} and Lemma \ref{LOprop} prove that $\tau_s\xi_0$ is monotone in $s$. Then, for any $s>0$, we have that, for a sufficiently large $n\in\N$, 
$$
\xi_0\leq\tau_{s}\xi_0\leq\tau_{s_n}\xi_0\rightarrow \xi_0,
$$
 which proves that $\tau_t \xi_0$ is constant in $t$ and so $\xi_0$ is corrector function. It remains to check that the whole sequence $\widetilde{\mathcal U}^T(0)$ converges to $\xi_0$.  Let $T>T_n$, then
 $$
 \Vert \widetilde{\mathcal U}^T(0)-\xi_0\Vert_\infty=\Vert \tau_{T-T_n}\widetilde{\mathcal U}^{T_n}(0)-\tau_{T-T_n}\xi_0\Vert_\infty\leq\Vert \widetilde{\mathcal U}^{T_n}(0)-\xi_0\Vert_\infty\rightarrow0
 $$
 and the result follows. 
 \end{proof}
\subsection{Convergence of optimal trajectories}
%\todo[inline]{look at the right definition of projected Mather set. Do not speak about attractor}
Now that we have proved the convergence of $\mathcal U^T(0,\cdot)+\lambda T$ to a corrector function $\chi$, we can properly define the limit trajectories for time dependent MFG and the set where these trajectories lay. We will show that this set is a subset of the projected Mather set $\mathcal M$ as was suggested in \cite{masoero2019}. We recall the definition of $\mathcal M$.

\begin{Definition}\label{def.PMset} We say that $m_0\in \mathcal P(\T^{d})$ belongs to the \textit{projected Mather set} ${\mathcal M}\subset \mathcal P(\T^{d})$ if there exists a calibrated curve $(m(t),\alpha(t))$ such that $m(0)=m_{0}$. 
\end{Definition}
\begin{Remark}\label{rem.Mather}
Note that the notion of projected Mather set that we use here is consistent with the one that was already introduced in \cite{masoero2019}. On the other hand, Definition \ref{def.PMset} is not the transposition of the definition of projected Mather set that is generally used in standard Weak KAM theory. In this latter case the projected Mather set is the union of the projection of the supports of Mather measures on the torus. What we call here projected Mather set would be rather the \textit{projected Aubry set} or the \textit{projected Mané set} (we refer to \cite{fathi2014weak} and \cite{fathi2008weak} for these definitions). We decided to use Definition \ref{def.PMset} mostly to be consistent with the terminology in \cite{masoero2019}. Moreover, it is worthwhile to mention that in the standard theory the Mather set and the Aubry set are deeply connected while in this framework such a relation is no longer clear. In particular, in standard Weak KAM theory the Mather set is contained in the Aubry set (where the latter is defined as the intersection of graphs of calibrated curves). One can check this inclusion defining a calibrated curve, starting from any point of the Mather set, through the Lagrangian flow. In the MFG setting, the lack of uniqueness of solutions and the forward/backward structure of the system prevent from defining any sensible notion of flow. Moreover, an other important difference, that highlights how the connection between Mather set and Aubry set is not clear in the MFG framework, is that, on the one hand, we know that calibrated curves lay on smooth probability measure but, on the other, we know nothing about the regularity of Mather measures' support points (reason why we introduced the notion of "smooth" Mather measure).
\end{Remark}
 We recall also that a couple $(\bar m,\bar\alpha)$, which satisfies $-\partial_t\bar m(t)+\Delta \bar m(t)+\dive(\bar\alpha(t)\bar m(t)=0$ for any $t\in\R$, is a calibrated curve, if there exists a corrector function $\chi:\mathcal P(\T^{d})\rightarrow \R$ such that, for any $t_{1}<t_{2}\in\R$,
\begin{equation}
\chi(\bar m(t_1))=\lambda(t_2-t_1)+ \int_{t_1}^{t_2} \inte H^{*}\left(x, \bar\alpha(s)\right)d\bar m(s) +\mathcal F(\bar m(s))ds+\chi(\bar m(t_2)).
\end{equation}

We fix $(m^T,\alpha^T)$ a minimizer for $\mathcal U^T(-T,m_0)$. As usual $\alpha^T=D_pH(x,D\bar u^T)$ where $(\bar u^T,m^T)$ solves \eqref{psystem} on $[-T,T]\times\T^d$. We define $u^T(t,x)=\bar u^T(t,x)-u^T(0,\bar x)$, for a fixed $\bar x\in\T^d$. We know from Lemma \ref{2est} that $D^2 \bar u^T$ and $\partial_t\bar u^T$ are uniformly bounded. This means that $u^T$ and $Du^T$ are uniformly bounded and uniformly continuous on any compact set of $\R\times\T^d$. Therefore, we have that up to subsequence $u^T$ converges to a function $u\in C^{1,2}(\R\times\T^d)$. The convergence, up to subsequence, of $m^T$ to a function $m\in C^0(\R,\mathcal P(\T^d))$ is ensured again by Lemma \ref{2est} and the uniform $C^{\frac{1}{2}}([0,T],\Pw)$ bounds on $m^T$ therein. It is standard that the couple $(u,m)$ solves in classical sense
$$
\begin{cases}
-\partial_t u-\Delta u+H(x,Du)=F(x,m) & \mbox{in } \R\times\T^{d},\\
-\partial_t m+\Delta m+{\rm div}(mD_pH(x,Du))=0 & \mbox{in }\R\times\T^{d}.
\end{cases}
$$
As by product we have that $(m^T,\alpha^T)$ uniformly converges on compact sets to the couple $(m,D_pH(x,Du))$. Moreover, if we define $\alpha=D_pH(x,Du)$, then $(m,\alpha)$ solves $-\partial_t m+\Delta m+\dive(m\alpha)=0$. We can now prove that $(m,\alpha)$ is a calibrated curve and therefore that $\mathcal M$ contains the uniform limits of optimal trajectories.

\begin{Theorem}\label{teo.malphaconv}
	Let $(m^T,\alpha^T)$ be an optimal trajectory for $\mathcal U^T(-T,m_0)$. Then, $(m^T,\alpha^T)$ converges, up to subsequence, to a calibrated curve $(m,\alpha)$. Consequently, $m(t)\in\mathcal M$ for any $t\in\R$.  
\end{Theorem}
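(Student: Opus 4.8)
The convergence, up to the subsequence already extracted in the discussion preceding the statement, of $(m^T,\alpha^T)$ to a pair $(m,\alpha)=(m,D_pH(\cdot,Du))$ solving the stationary MFG system on $\R\times\T^d$, together with the uniform-on-compacts convergences $u^T\to u$, $Du^T\to Du$ and $m^T\to m$ in $C^0(\R,\mathcal P(\T^d))$, has already been obtained there. It therefore only remains to produce a corrector function $\chi$ for which $(m,\alpha)$ satisfies the calibration identity \eqref{kajehbzredf}. The natural candidate is $\chi=\xi_0$, the corrector produced in Theorem \ref{teo.xiconv}, and the plan is to pass to the limit $T\to+\infty$ in the dynamic programming principle for $\mathcal U^T$ evaluated along the optimal trajectory $(m^T,\alpha^T)$.

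The one non-routine ingredient is the following reinforcement of Theorem \ref{teo.xiconv}: setting $\widetilde{\mathcal U}^T(t,m)=\mathcal U^T(t,m)+\lambda(T-t)$, one has $\widetilde{\mathcal U}^T(t,\cdot)\to\xi_0$ uniformly on $\mathcal P(\T^d)$, and uniformly for $t$ in any bounded interval. This follows at once from time-translation invariance: since the Lagrangian and the Fokker--Planck constraint are autonomous, $\mathcal U^T(t,m)=\mathcal U^{T-t}(0,m)$, hence $\widetilde{\mathcal U}^T(t,m)=\widetilde{\mathcal U}^{T-t}(0,m)$, and as $T\to+\infty$ with $t$ bounded we have $T-t\to+\infty$ uniformly, while $\widetilde{\mathcal U}^{S}(0,\cdot)=\mathcal U^S(0,\cdot)+\lambda S\to\xi_0$ uniformly on $\mathcal P(\T^d)$ by Theorem \ref{teo.xiconv}.

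Now fix $t_1<t_2$ in $\R$. For $T$ large enough that $[t_1,t_2]\subset(-T,T)$, the dynamic programming principle (the general version of \eqref{Utdpp}, applied on the interval $[t_1,t_2]$) together with the Bellman optimality of the restriction of $(m^T,\alpha^T)$ to $[t_1,t_2]$ gives
\[
\widetilde{\mathcal U}^T(t_1,m^T(t_1))=\lambda(t_2-t_1)+\int_{t_1}^{t_2}\!\left(\int_{\T^d}H^*(x,\alpha^T(s,x))\,m^T(s,dx)+\mathcal F(m^T(s))\right)ds+\widetilde{\mathcal U}^T(t_2,m^T(t_2)).
\]
Letting $T\to+\infty$, the boundary terms converge to $\xi_0(m(t_1))$ and $\xi_0(m(t_2))$ since, for $i=1,2$, $|\widetilde{\mathcal U}^T(t_i,m^T(t_i))-\xi_0(m(t_i))|\le\|\widetilde{\mathcal U}^T(t_i,\cdot)-\xi_0\|_\infty+|\xi_0(m^T(t_i))-\xi_0(m(t_i))|\to0$ by the uniform convergence above and the continuity of $\xi_0$ combined with $m^T(t_i)\to m(t_i)$; the running cost converges to $\int_{t_1}^{t_2}(\int_{\T^d}H^*(x,\alpha(s,x))m(s,dx)+\mathcal F(m(s)))\,ds$ because $\alpha^T=D_pH(\cdot,Du^T)\to\alpha=D_pH(\cdot,Du)$ uniformly on the compact set $[t_1,t_2]\times\T^d$, the $\alpha^T$ are uniformly bounded there by Lemma \ref{2est} so that $H^*(x,\alpha^T(s,x))$ stays bounded, $m^T\to m$ uniformly, and $H^*,\mathcal F$ are continuous. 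We thus obtain \eqref{kajehbzredf} with $\chi=\xi_0$ for every $t_1<t_2$, which is exactly the statement that $(m,\alpha)$ is a calibrated curve. Finally, \eqref{kajehbzredf} is invariant under time translation, so for each $\tau\in\R$ the curve $s\mapsto(m(s+\tau),\alpha(s+\tau))$ is again calibrated with the same corrector $\xi_0$; in particular $m(\tau)\in\mathcal M$ by Definition \ref{def.PMset}.

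The genuinely delicate point is the uniform-in-$m$ convergence $\widetilde{\mathcal U}^T(t,\cdot)\to\xi_0$ and, relatedly, extracting from the dynamic programming principle an \emph{equality} (rather than a mere inequality) along the limiting curve; both are handled above via translation invariance and the Bellman optimality of the global minimizer $(m^T,\alpha^T)$. The passage to the limit in the running cost is routine once the uniform estimates of Lemma \ref{2est} are available.
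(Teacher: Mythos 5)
Your proof is correct and follows essentially the same route as the paper: pass to the limit in the dynamic programming identity satisfied by the optimal trajectory on $[t_1,t_2]$, using the time-translation identity $\mathcal U^T(t,m)=\mathcal U^{T-t}(0,m)$ together with the uniform convergence $\mathcal U^S(0,\cdot)+\lambda S\to\xi_0$ from Theorem \ref{teo.xiconv} to control the boundary terms, and the uniform-on-compacts convergence of $(m^T,\alpha^T)$ from Lemma \ref{2est} to pass to the limit in the running cost. You spell out the reduction via $\widetilde{\mathcal U}^T$ and the boundary-term estimates more explicitly than the paper, but the argument is the same.
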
 
\begin{proof}
	As we have already discussed the convergence of $(m^T,\alpha^T)$ to $(m,\alpha)$ we just need to check that $(m,\alpha)$ is a calibrated curve. We fix $t_1<t_2\in \R$, then, by dynamic programming principle, 
	$$
	\mathcal U^T(t_1,m_0)+\lambda(T-t_1) =\int_{t_{1}}^{t_{2}} \int_{\T^d}H^*\left(x,\alpha^T(s)\right)dm^T(s)+\mathcal F(m^T(s))ds+\mathcal U^T(t_2,m^T(t_2))+\lambda (T-t_1).
	$$
%	$$
%	\int_{t_{1}}^{t_{2}} \int_{\T^d}H^*\left(x,\alpha^T(s)\right)dm^T(s)+\mathcal F(m^T(s))ds+\mathcal U^T(t_2,m^T(t_2))+(T-t_2)\lambda+\lambda (t_2-t_1).
%	$$
	We recall that $\mathcal U^T(t,m_0)=\mathcal U^{T-t}(0,m_0)$. 
	Given the continuity of $\mathcal U^T(0,\cdot)$, the uniform convergence of $(m^T,\alpha^T)$ on compact subsets and the uniform convergence of $\mathcal U^T(0,\cdot)+\lambda T$ to $\chi(\cdot)$, we can pass to the limit in $T$ and we get that, for any interval $[t_{1},t_{2}]$, the couple $(m,\alpha)$ verifies	
	$$
	\chi(m(t_{1}))= \int_{t_{1}}^{t_{2}}\inte H^{*}\left(x,\alpha(s)\right)dm(s)+\mathcal F(m(s))ds+\chi (m(t_{2}))+\lambda (t_{2}-t_{1}).
	$$
So 	$(m,\alpha)$ is a calibrated curve. 
\end{proof}

 \section{Appendix}

 \subsection{Proof of Proposition \ref{prop.lacker}}

 \begin{proof}[Proof of Proposition \ref{prop.lacker}] To simplify the notation, we argue as if the convergence of $v^N$ to $V$ in Lemma \ref{lemma.vN.conv.V} holds for the full sequence (i.e., $N_k=N$). We denote with $\mathcal P_2(\R^d)$ the set of Borel probability measures $m$ on $\R^d$ such that 
 	$$
 	\int_{\R^d}\vert x\vert^2m(dx)<+\infty
 	$$
 	 We denote by $\pi:\R^d\to \T^d$ is the usual projection  (and, by abuse of notation, we set $\pi({\bf x})= (\pi(x^1), \dots, \pi(x^N))$ for any ${\bf x}=(x^1, \dots, x^N)\in (\R^d)^N$).
 	 Let $m_0\in\mathcal P_2(\R^d)$ with compact support and $(\xi^i)_{i\in \N}$ be an i.i.d. sequence of random variables on $\R^d$ with law $m_0$. Fix also $(B^i)$ independent Brownian motions on $\R^d$ which are also independent of $(\xi^i)$ and set ${\bf \xi}^N=(\xi^1, \dots, \xi^N)$. When we look at $v^N$ as a $\Z^d-$periodic map on $(\R^d)^N$, we have by classical representation formula that
 	\begin{equation}\label{nplayerpb}
 	\E\left[ v^N({\bf \xi}^N)\right] = \inf_{{\bf \alpha}} \E\left[ \sum_{i=1}^N \int_0^T H^*(X^i_t,\alpha^i_t) dt + {\mathcal F} (\pi\sharp m^N_{{\bf X}_t})\ dt + v^N({\bf X}_T)\right]+\lambda^N T, 
 	\end{equation}
 	where the infimum is taken over progressively measurable controls ${\bf \alpha}= (\alpha^1, \dots, \alpha^N)$ (adapted to the filtration generated by the $(B^i)$ and the $(\xi^i)$) and where ${\bf X}= (X^1, \dots, X^N)$ solves 
 	$$
 	dX^i_t= \alpha^i_tdt + \sqrt{2} dB^i_t,\; t\in [0,T], \qquad X^i_0=\xi^i.
 	$$
 	The optimal feedback in \eqref{nplayerpb} is well-known: it is given by $\alpha^{*i,N}(t,{\bf x}):=D_pH(x_i,D_{x_i}v^N({\bf x}))$. We denote by ${\bf \alpha}^{*N}=(\alpha^{*1,N}, \dots, \alpha^{*N,N})$ and ${\bf X}^{*N}=(X^{*1,N}, \dots, X^{*N,N})$ the corresponding optimal solution: 
 	$$
 	dX^{*i,N}_t= \alpha^{*i,N}_tdt + \sqrt{2} dB^i_t,\; t\in [0,T], \qquad X^{*i,N}_0=\xi^i.
 	$$
 	
 	By Lemma \ref{lemma.vN.conv.V}, we have 
 	\begin{equation} \label{zaeimsdcj}
 	\lim_{N\to +\infty}  \ep_N=0 \qquad {\rm where}\qquad \ep_N:=  \sup_{{\bf x}\in (\R^d)^N} \left| V(\pi\sharp m^N_{{\bf x}})- v^N({\bf x})\right|, 
 	\end{equation}
 	because $v^N({\bf x})= v^N((\pi(x^1), \dots, \pi(x^N))$ while $\pi\sharp m^N_{{\bf x}}= m^N_{\pi({\bf x})}$. 
 	
 	Note that \eqref{zaeimsdcj} implies, on the one hand, that 
 	\begin{align}\label{lkjnsclklkm}
 	\left| \E\left[v^N({\bf \xi^N})\right] - \E\left[ \sum_{i=1}^N \int_0^T H^*(X^{*i,N}_t,\alpha^{*i,N}_t) dt + {\mathcal F} (\pi\sharp m^N_{{\bf X}^{*N}_t})\ dt + V(\pi\sharp m^N_{{\bf X}^{*N}_T}) +\lambda^NT\right]\right|  \leq \ep_N,
 	\end{align}
 	and, on the other hand, that ${\bf \alpha}^{*N}$ is $2\ep_N$ optimal for the problem if one replaces $v^N$ by $V$ in the optimal control problem: 
 	\begin{align}\label{ljkanzesx}
 	& \Bigl| \E\left[ \sum_{i=1}^N \int_0^T H^*(X^{*i,N}_t,\alpha^{*i,N}_t) dt + {\mathcal F} (\pi\sharp m^N_{{\bf X}^{*N}_t})\ dt + V(\pi\sharp m^N_{{\bf X}^{*N}_T})\right]\notag  \\
 	& \qquad - \inf_{{\bf \alpha}} \E\left[ \sum_{i=1}^N \int_0^T H^*(X^i_t,\alpha^i_t) dt + {\mathcal F} (\pi\sharp m^N_{{\bf X}_t})\ dt + V(\pi\sharp m^N_{{\bf X}_T})\right]\Bigr| \leq  2 \ep_N.
 	\end{align}
 	%where the infimum is taken over the controls ${\bf \alpha}$ as previously. 
 	
 	We aim at letting $N\to +\infty$ in the above inequalities. By the law of large numbers we know that $(m^N_{{\bf \xi}^N})$ converges a.s. and in expectation in ${\mathcal P}_2$ to $m_0$. Therefore, by the Lipschitz continuity of $V$, we obtain  
 	\begin{align}\label{lkjnsclklkmBIS}
 	& \limsup  \left| \E\left[v^N({\bf \xi}^N)\right] - V(\pi\sharp m_0)\right|  \notag \\
 	& \qquad   \leq\limsup \E\left[ \left| v^N({\bf \xi}^N)- V(\pi\sharp m^N_{{\bf \xi}^N})\right| \right] 
 	+ \limsup \E\left[ \left|  V(\pi\sharp m^N_{{\bf \xi}^N})- V(\pi\sharp m_0) \right| \right]  \\
 	&\qquad  \leq \limsup  \ep_N+ C  \E\left[ {\bf d}_2 (m^N_{{\bf \xi}^N}, m_0)\right]  = 0.   \notag
 	\end{align}
 	In order to pass to the limit in \eqref{ljkanzesx}, we use several results of \cite{lacker2017limit}. The first one  (Corollary 2.13) states that $m^N_{{\bf X}^{*N}}$ is precompact in ${\mathcal P}_2([0,T], {\mathcal P}_2(\R^d))$ and that every weak limit has a support in the set of relaxed minimizers of the McKean Vlasov optimal control problem  in weak Markovian formulation (expressed here with---almost---the notation of \cite{lacker2017limit}, see Proposition 2.5): 
 	\begin{equation}\label{relaxedpb}
 	\kappa:= \inf_{\P} \E^\P\left[ \int_0^T H^*(X_t, \hat \alpha(t,X_t)) + {\mathcal F}(\pi\sharp ( \P\circ X_t^{-1})) \ dt + V(\pi\sharp (\P\circ X_T^{-1})) \right]+\lambda^*T,
 	\end{equation}
 	where the infimum is taken over the family of probability spaces $(\Omega, ({\mathcal F}_t), \P)$  supporting a $d-$dimensional process $X$ and a $d-$dimensional Brownian motion $B$, such that $\P\circ X_0^{-1}= m_0$, $\hat \alpha: [0,T]\times \R^d \to \R^d$ is Borel measurable, and the following  holds: 
 	\begin{equation}\label{lkejsdn}
 	dX_t= \hat \alpha(t,X_t)dt +\sqrt{2}dB_t, 
 	\qquad 
 	{\rm with} 
 	\qquad
 	\E^\P\left[ \int_0^T |X_t|^2 + |\hat \alpha(t,X_t)|^2 \ dt \right] <+\infty.
 	\end{equation}
 	The above problem can be reformulated in PDE term as  
 	\begin{equation}\label{aemsrdlcgv}
 	\kappa= \inf_{(m,\hat \alpha)} \int_0^T \int_{\R^d} H^*(x,\hat \alpha(t,x)) m(t,dx) dt + \int_0^T {\mathcal F}(\pi\sharp m(t))dt + V(\pi\sharp m(T))+\lambda^*T,
 	\end{equation}
 	where the infimum is taken over the pairs $(m,\hat \alpha)$ with $m\in C^0([0,T], {\mathcal P}_2(\R^d))$, $\hat \alpha\in L^2([0,T], L^2_{m(t)}(\R^d))$ and 
 	\begin{equation}\label{sfdkgfcontcont}
 	\partial_t m -\Delta m -{\rm div}(m\hat \alpha)=0 \quad {\rm in}\; (0,T)\times \R^d, \qquad m(0)=m_0.
 	\end{equation}
 	Indeed, if $\P$ is admissible in \eqref{relaxedpb},  we just need to set $m(t)= \P\circ X_t^{-1}$ and then $(m,\hat \alpha)$ is admissible in \eqref{aemsrdlcgv}. Conversely, if $(m,\hat \alpha)$ is admissible in \eqref{aemsrdlcgv}, then the exists a weak solution to the SDE  \eqref{lkejsdn}: this  precisely means that there exists a stochastic basis which is admissible for \eqref{relaxedpb}.  
 	
 	Next, we note that the proof of Theorem 2.11 in \cite{lacker2017limit} (and more precisely inequality (6.1))  shows that 
 	$$
 	\lim_N \E\left[ \sum_{i=1}^N \int_0^T H^*(X^{*i,N}_t,\alpha^{*i,N}_t) dt + {\mathcal F} (\pi\sharp m^N_{{\bf X}^{*N}_t})\ dt + V(\pi\sharp m^N_{{\bf X}^{*N}_T})\right] +\lambda^*T= \kappa . 
 	$$
 	Putting together \eqref{lkjnsclklkm}, \eqref{lkjnsclklkmBIS}, \eqref{aemsrdlcgv} and the above equality shows that 
 	\begin{align}\label{hqebsdrejnqdn}
 	& V(\pi\sharp m_0)  = 
 	\lim_N \E\left[ v^N({\bf \xi}^N) \right]  \notag\\
 	&  = 
 	\lim_N \E\left[ \sum_{i=1}^N \int_0^T H^*(X^{*i,N}_t,\alpha^{*i,N}_t) dt + {\mathcal F} (\pi\sharp m^N_{{\bf X}^{*N}_t})\ dt + V(\pi\sharp m^N_{{\bf X}^{*N}_T})\right]+\lambda^*T   \\
 	&   = \kappa=  \inf_{(m,\hat \alpha)} \int_0^T \int_{\R^d} H^*(x,\hat \alpha(t,x)) m(t,dx) dt + \int_0^T {\mathcal F}(\pi\sharp m(t))dt + V(\pi\sharp m(T))+\lambda^*T,\notag  
 	\end{align}
 	where the infimum is computed as above. 
 	
 	It remains to explain why we can work in $\T^d$ instead of $\R^d$. For this, let us  define, for any $m_0\in\mathcal P(\R^d)$ and any $\tilde m_0\in\mathcal P_2(\T^d)$, $J_{\R^d}(m_0)$  and $J_{\T^d}(\tilde m_0)$ by 
 	\begin{align*}
 	&J_{\R^d}(m_0):=  \inf_{(m,\hat \alpha)} \int_0^T \int_{\R^d} H^*(x,\hat \alpha(t,x)) m(t,dx) dt + \int_0^T {\mathcal F}(\pi\sharp m(t))dt + V(\pi\sharp m(T)), 
 	\end{align*} 
where the infimum is taken over the pairs $(m,\hat \alpha)$ as above,
	and 
 	\begin{align*}
 	& J_{\T^d}(\tilde m_0):=  \inf_{(\tilde m, \tilde w)} \int_0^T \int_{\T^d} H^*(x, -\frac{d\tilde w}{d\tilde m}(t,x))\tilde m(t,dx)dt+ \int_0^T {\mathcal F}(\tilde m(t))dt + V(\tilde m(T)), 
 	\end{align*}
 	where the infimum is computed (as usual) over the pairs $(\tilde m,w)$ such that $\tilde m\in C^0([0,T], \mathcal P(\T^d))$, $\tilde w$ is a vector measure on $[0,T]\times \T^d$ with values in $\R^d$ with first marginal $dt$ and which is absolutely continuous with respect to $\tilde m$ with
 	$$
 	\int_0^T \int_{\T^d} \Bigl| \frac{d\tilde w}{d\tilde m}(t,x)\Bigr|^2 \tilde m(t,dx)dt <+\infty
 	$$
 	and the continuity equation 
 	$$
 	\partial_t  \tilde m -\Delta  \tilde m +{\rm div}(\tilde  w)=0 \; {\rm in}\; (0,T)\times \T^d, \qquad \tilde  m(0)=\tilde m_0
 	$$
 	holds.	
From the Lipschitz continuity of $V$ and the convexity of $H^*$ one can easily prove that $J_{\R^d}$ is continuous on $\mathcal P_2(\R^d)$. Our aim is to show that $J_{\R^d}(m_0)= J_{\T^d}(\pi\sharp m_0)$ and $J_{\T^d}(\tilde m_0)= J_{\R^d}(\tilde m_0{\bf 1}_{Q_1})$ for any $m_0\in\mathcal P_2(\R^d)$ and any $\tilde m_0\in\mathcal P(\T^d)$, where $Q_1=[-1/2,1/2)^d$.

	Let $m_0\in {\mathcal P}_2(\R^d)$ and $(m,\hat \alpha)$ be admissible for $J_{\R^d}(m_0)$. We see $w:= -\hat \alpha m$ as a vector measure on $[0,T]\times \R^d$ with values in $\R^d$ and finite mass (since $\hat \alpha \in L^2([0,T], L^2_{m(t)}(\R^d))$).  Let us set $\tilde m(t)=\pi\sharp m(t)$ and $\tilde w(t):= \pi \sharp w(t)$. Then, as $(m,\alpha)$ satisfies \eqref{sfdkgfcontcont},  $(\tilde m, \tilde w)$ solves the continuity equation 
 	$$
 	\partial_t \tilde m -\Delta \tilde m +{\rm div}(\tilde w)=0 \; {\rm in}\; (0,T)\times \T^d, \qquad \tilde m(0)=\pi\sharp m_0. 
 	$$	
 	Indeed, if $\varphi$ is a smooth function with compact support on $[0,T)\times\T^d$ we have
 	\begin{align*}
 	&\int_0^T\int_{\T^d}(\partial_t\varphi(t,x)+\Delta \varphi(t,x))\tilde m(t,dx)+D\varphi(t,x)\tilde w(t,dx) + \int_{\T^d} \phi(0)m_0\\
 	& \qquad = \int_0^T\int_{\R^d}(\partial_t\varphi(t,\pi(y))+\Delta \varphi(t,\pi(y))) m(t,dy)+D\varphi(t,\pi(y)) w(t,dy) + \int_{\T^d} \phi(0)m_0 =0,
 	\end{align*}
 	where the last equality holds because the map $(t,y)\mapsto \varphi(t,\pi(y))$ is smooth and bounded with bounded derivatives on $[0,T)\times\R^d$ and $(m,w)$ verifies $-\partial_t m+\Delta m-\dive w=0$. As
 	\begin{align*}
 	\int_0^T \int_{\T^d} H^*(x, \frac{d\tilde w}{d\tilde m}(t,x))\tilde m(t,dx)dt & = \int_0^T\int_{\R^d} H^*(\pi(x), \frac{dw}{dm}(t,x))m(t,dx)dt  \\
 	& = \int_0^T \int_{\R^d} H^*(x, \hat \alpha(t,x))m(t,dx)dt,
 	\end{align*}
 	(since $L$ is $\Z^d$ periodic in the first variable), we easily derive that  $J_{\R^d}(m_0)\geq J_{\T^d}(\pi\sharp m_0)$.
% 	\begin{align}\label{JRd}
% 	& \inf_{(m,\hat \alpha)} \int_0^T \int_{\R^d} H^*(x,\hat \alpha(t,x)) m(t,dx) dt + \int_0^T {\mathcal F}(\pi\sharp m(t))dt + V(\pi\sharp m(T))\qquad\qquad
% 	\end{align} 
% 	\begin{align}\label{JTd}
% 	& \qquad\qquad \geq \inf_{(\tilde m, \tilde w)} \int_0^T \int_{\T^d} H^*(x, -\frac{d\tilde w}{d\tilde m}(t,x))\tilde m(t,dx)dt+ \int_0^T {\mathcal F}(\tilde m(t))dt + V(\tilde m(T)), 
% 	\end{align}
% 	where the infimum in the right-hand side is computed (as usual) over the pairs $(\tilde m,w)$ such that $\tilde m\in C^0([0,T], \mathcal P(\T^d))$, $\tilde w$ is a vector measure on $[0,T]\times \T^d$ with values in $\R^d$ with first marginal $dt$ and which is absolutely continuous with respect to $\tilde m$ with
% 	$$
% 	\int_0^T \int_{\T^d} \Bigl| \frac{d\tilde w}{d\tilde m}(t,x)\Bigr|^2 \tilde m(t,dx)dt <+\infty
% 	$$
% 	and the continuity equation 
% 	$$
% 	\partial_t  \tilde m -\Delta  \tilde m +{\rm div}(\tilde  w)=0 \; {\rm in}\; (0,T)\times \T^d, \qquad \tilde  m(0)=\pi\sharp m_0
% 	$$
% 	holds. To prove the opposite inequality we first define, for any $m_0\in\mathcal P(\R^d)$ and any $\tilde m_0\in\mathcal P(\T^d)$, $J_{\R^d}(m_0)$ as the value of the minimization problem \eqref{JRd} with initial condition $m_0$ and $J_{\T^d}(\tilde m_0)$ as the minimization problem \eqref{JTd} with initial condition $\tilde m_0$. From the Lipschitz continuity of $V$ and the convexity of $H^*$ one can easily prove that $J_{\R^d}$ is continuous on $\mathcal P(\R^d)$.

 To prove the opposite inequality, let now $(\tilde m,\tilde w)$ be $\theta$-optimal for $J_{\T^d}(\tilde m_0)$. We define $(\tilde m_\varepsilon,\tilde w_\varepsilon):=(\tilde m*\xi_{\varepsilon},\tilde w*\xi_{\varepsilon})$ where $\xi_\varepsilon$ is a standard mollification kernel in space. The couple $(\tilde m_\varepsilon,\tilde w_\varepsilon)$ solves the usual continuity equation with initial condition $\tilde m_{0,\varepsilon}=\tilde m_0*\xi_\varepsilon$. Then
 		\begin{align}\label{}
 		\int_0^T \int_{\T^d} H^*(x, -\frac{d\tilde w_\varepsilon}{d\tilde m_\varepsilon}(t,x))\tilde m_\varepsilon(t,dx)dt+ \int_0^T {\mathcal F}(\tilde m_\varepsilon(t))dt + V(\tilde m_\varepsilon(T)) \leq J_{\T^d}(\tilde m_0)+ \theta+ O_\varepsilon(1).
 		\end{align} 		
 	Set $\tilde \alpha_\varepsilon(t,x)= -(\tilde w_\varepsilon/\tilde m_\varepsilon)(t,x)$. Let $m_\varepsilon$ be the solution to 
 	$$
 	\partial_t m_\varepsilon -\Delta m_\varepsilon - {\rm div}(\tilde  \alpha_\varepsilon(t,\pi(x))m_\varepsilon)=0 \; {\rm in}\; (0,T)\times \R^d, \qquad  m_\varepsilon(0,y)= \tilde m_{0,\varepsilon}(\pi(y)){\bf 1}_{Q_1}(y) \quad y\in\R^d. 
 	$$
 	Then, by periodicity of $\tilde\alpha_\varepsilon$, $ \tilde \mu_\varepsilon(t):= \pi\sharp m_\varepsilon(t)$ solves 
 	$$
 	\partial_t \tilde \mu_\varepsilon -\Delta \tilde \mu_\varepsilon - {\rm div}(\tilde  \alpha_\varepsilon(t,x)\tilde \mu_\varepsilon )=0 \; {\rm in}\; (0,T)\times \T^d, \qquad \tilde  \mu_\varepsilon(0)=\pi\sharp m_\varepsilon(0)= \tilde m_{0,\varepsilon}, 
 	$$
 	which has $\tilde m_\varepsilon$ has unique solution since $\tilde  \alpha_\varepsilon$ is smooth in space. This shows that $\pi\sharp m_\varepsilon(t)= \tilde m_\varepsilon(t)$.  Therefore, as $( m_\varepsilon(\cdot,\cdot),\tilde\alpha_\varepsilon(\cdot,\pi(\cdot)))$ is an admissible competitor for $J_{\R^d}(m_\varepsilon(0))$, we get 
 	\begin{align*}
 	&J_{\R^d}(m_\varepsilon(0))\leq \int_0^T \int_{\R^d} H^*(x,  \tilde \alpha_\varepsilon(t,\pi(x))) m_\varepsilon(t,dx)dt+ \int_0^T {\mathcal F}(\pi\sharp  m_\varepsilon(t))dt +V(\pi \sharp  m_\varepsilon(T)) \\
 	& \qquad = \int_0^T \int_{\T^d} H^*(x, \tilde \alpha_\varepsilon(t,x))\tilde m_\varepsilon(t,dx)dt + \int_0^T {\mathcal F}(\tilde m_\varepsilon(t))dt +V(\tilde m_\varepsilon(T)) \leq J_{\T^d}(\tilde m_0)+ \theta+ O_\varepsilon(1).
 	\end{align*}
 	
 	As $J_{\R^d}$ is continuous, we can pass to the limit as $\ep$ and then $\theta$ tend to $0$. Then we find that $J_{\R^d}(\tilde m_0{\bf 1}_{Q_1})\leq J_{\T^d}(\tilde m_0)$ and, therefore, that $J_{\R^d}(\tilde m_0{\bf 1}_{Q_1})= J_{\T^d}(\tilde m_0)$. In view of \eqref{hqebsdrejnqdn}, this completes the proof of the proposition.
 \end{proof}

  \subsection{A viscosity solution property} 
 
 \begin{Lemma}\label{lem.supsol} Let $\Phi= \Phi(t,m)$ be a smooth test function such that ${\mathcal U}^T-\Phi$ has a minimum at a point $(t_0,m_0)\in [0,T)\times \Pw$. Let $(u,m)$ be a solution of the MFG system \eqref{psystem} starting from $(t_0,m_0)$ and such that $(m, D_pH(x,Du(t,x))$ is optimal for ${\mathcal U}(t_0,m_0)$. Then 
 	\begin{equation}\label{DmPhiDu}
 	D_m\Phi(t_0, m_0,x)= Du(t_0,x) \;\mbox{\rm for $m_0-$a.e. $x\in \T^d$}
 	\end{equation}
 	and 
 	\begin{equation}\label{intequ}
 	-\partial_t\Phi(t_0,m_0)+\inte (H(x, Du(t_0,x)-\Delta u(t_0,x))m_0(dx) -{\mathcal F}(m_0) \geq 0.
 	\end{equation}
 \end{Lemma}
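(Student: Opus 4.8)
The statement is the viscosity--supersolution property of the Hamilton--Jacobi--Bellman equation satisfied formally by $\mathcal U^T$ on $[0,T]\times\mathcal P(\T^d)$, and the proof rests on the dynamic programming principle \eqref{Utdpp} combined with the minimum property of $\mathcal U^T-\Phi$ at $(t_0,m_0)$. The plan is to first prove \eqref{DmPhiDu}, and then to derive \eqref{intequ} by running the dynamic programming inequality along the optimal trajectory and letting the time increment tend to zero.

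For \eqref{DmPhiDu}: set $b(t,x):=D_pH(x,Du(t,x))$, the optimal feedback, smooth on $[t_0,T]\times\T^d$, and recall that $m_0=m(t_0)$ has a density (with $Dm_0\in L^1$) because $(u,m)$ is a classical solution of \eqref{psystem}. For a smooth vector field $\xi$ on $\T^d$ and $\varepsilon>0$ small, let $m_1:=(\mathrm{id}+\varepsilon\xi)_\sharp m_0$ and let $\mu^{m_1}$ solve $-\partial_t\mu+\Delta\mu+\dive(\mu\,b)=0$ on $[t_0,T]$ with $\mu^{m_1}(t_0)=m_1$; then $(\mu^{m_1},b)$ is admissible for $\mathcal U^T(t_0,m_1)$. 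Plugging it in, using $H^*(x,b(t,x))=Du(t,x)\cdot b(t,x)-H(x,Du(t,x))$ and the $u$--equation $\partial_t u=-\Delta u+H(x,Du)-F(x,m(t))$, and integrating $\tfrac{d}{dt}\int_{\T^d}u(t,x)\mu^{m_1}(t,dx)$ (with $u(T,\cdot)=0$), one gets
$$
\mathcal U^T(t_0,m_1)\le\int_{\T^d}u(t_0,x)\,m_1(dx)+R(m_1),\qquad R(m_1):=\int_{t_0}^T\!\Bigl(\mathcal F(\mu^{m_1}(t))-\int_{\T^d}F(x,m(t))\,\mu^{m_1}(t,dx)\Bigr)dt,
$$
with equality at $m_1=m_0$ (then $\mu^{m_0}=m$ and $\int F(x,m(t))m(t,dx)=0$ by \eqref{deriv.conve}). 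Since $\|\mu^{m_1}(t)-m(t)\|_{L^1}=O(\varepsilon)$ uniformly in $t$ (maximum principle for the linear parabolic equation, as $\|m_1-m_0\|_{L^\infty}=O(\varepsilon)$) and $\mathcal F\in C^2$, Taylor expansion gives $R(m_1)-R(m_0)=O(\varepsilon^2)$, hence $\mathcal U^T(t_0,m_1)-\mathcal U^T(t_0,m_0)\le\int_{\T^d}u(t_0,x)(m_1-m_0)(dx)+O(\varepsilon^2)$. Combining with the minimum property $\Phi(t_0,m_1)-\Phi(t_0,m_0)\le\mathcal U^T(t_0,m_1)-\mathcal U^T(t_0,m_0)$, expanding both sides to first order in $\varepsilon$ (using $\int g\,d(m_1-m_0)=\varepsilon\int_{\T^d}Dg\cdot\xi\,dm_0+o(\varepsilon)$), dividing by $\varepsilon$, letting $\varepsilon\to0^+$ and repeating with $-\xi$, yields $\int_{\T^d}D_m\Phi(t_0,m_0,x)\cdot\xi(x)\,m_0(dx)=\int_{\T^d}Du(t_0,x)\cdot\xi(x)\,m_0(dx)$ for all $\xi$, i.e.\ \eqref{DmPhiDu}.

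For \eqref{intequ}: along the optimal trajectory $(m,\alpha)$ with $\alpha=b$, the dynamic programming principle is an equality, $\mathcal U^T(t_0,m_0)-\mathcal U^T(s,m(s))=\int_{t_0}^s\bigl(\int_{\T^d}H^*(x,\alpha(r,x))m(r,dx)+\mathcal F(m(r))\bigr)dr$ for $s>t_0$, while the minimum property gives $\Phi(t_0,m_0)-\Phi(s,m(s))\ge\mathcal U^T(t_0,m_0)-\mathcal U^T(s,m(s))$. For $s>t_0$ the measure $m(s)$ has a smooth density, so the $C^{1,1}$ chain rule (and one integration by parts) gives
$$
\tfrac{d}{ds}\Phi(s,m(s))=\partial_t\Phi(s,m(s))+\int_{\T^d}\bigl(\dive_y D_m\Phi(s,m(s),y)-D_m\Phi(s,m(s),y)\cdot\alpha(s,y)\bigr)m(s,dy);
$$
its right-hand side---like $r\mapsto\int_{\T^d}H^*(x,\alpha(r,x))m(r,dx)+\mathcal F(m(r))$---extends continuously to $s=t_0$ by continuity of $D_m\Phi$, $D_yD_m\Phi$, $\alpha$ and $m(s)\to m_0$. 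Dividing the inequality by $s-t_0$, letting $s\to t_0^+$, and applying the Fenchel inequality $D_m\Phi\cdot\alpha-H^*(x,\alpha)\le H(x,D_m\Phi)$ gives
$$
-\partial_t\Phi(t_0,m_0)+\int_{\T^d}\bigl(H(x,D_m\Phi(t_0,m_0,x))-\dive_x D_m\Phi(t_0,m_0,x)\bigr)m_0(dx)-\mathcal F(m_0)\ge0.
$$
By \eqref{DmPhiDu}, $\int_{\T^d}H(x,D_m\Phi(t_0,m_0,x))m_0(dx)=\int_{\T^d}H(x,Du(t_0,x))m_0(dx)$; and since $m_0$ has a density, integration by parts together with \eqref{DmPhiDu} once more gives $\int_{\T^d}\dive_x D_m\Phi(t_0,m_0,x)m_0(dx)=-\int_{\T^d}D_m\Phi(t_0,m_0,x)\cdot Dm_0(x)dx=-\int_{\T^d}Du(t_0,x)\cdot Dm_0(x)dx=\int_{\T^d}\Delta u(t_0,x)m_0(dx)$. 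Substituting yields \eqref{intequ}.

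The main obstacle is the passage $s\to t_0^+$ in the second part: the Fokker--Planck flow is parabolically smooth only for $s>t_0$, so one must justify that both the chain-rule integrand and the running cost are continuous up to $t_0$ and that the one-sided derivative of $s\mapsto\Phi(s,m(s))$ is given by the displayed formula there. The conversion of $\int_{\T^d}\dive_x D_m\Phi(t_0,m_0,\cdot)\,m_0$ into $\int_{\T^d}\Delta u(t_0,\cdot)\,m_0$ uses essentially that $m_0$ has a density, which here is ensured by the structure of \eqref{psystem}; a secondary technical point, handled via transport (rather than convex) perturbations of $m_0$, is the uniform-in-time control of $\mu^{m_1}(t)-m(t)$ needed for $R(m_1)-R(m_0)=O(\varepsilon^2)$.
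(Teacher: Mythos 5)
The structure of your argument matches the paper's (first variation to get \eqref{DmPhiDu}, then the dynamic programming principle and a substitution to get \eqref{intequ}), but in both steps you take a genuinely different route, and the second step has a gap.

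For \eqref{DmPhiDu}, you perturb $m_0$ by transport maps $(\mathrm{id}+\varepsilon\xi)_\sharp m_0$ and use $\pm\xi$ to get the equality of weighted integrals. The paper instead perturbs by convex combinations $(1-h)m_0+h m'(t_0)$ and, crucially, chooses $m'(t_0)$ to be arbitrary Dirac masses. This yields the \emph{pointwise} inequality $\frac{\delta\Phi}{\delta m}(t_0,m_0,x)\le u(t_0,x)-\inte u(t_0,y)m_0(dy)$ for \emph{every} $x\in\T^d$, with equality $m_0$-a.e.\ after integrating against $m_0$ and invoking the normalization \eqref{deriv.conve}. The conclusion the paper actually records, and uses implicitly in the second step, is therefore not only $D_m\Phi=Du$ $m_0$-a.e.\ but the stronger statement that $x\mapsto\frac{\delta\Phi}{\delta m}(t_0,m_0,x)-u(t_0,x)$ attains its maximum at $m_0$-a.e.\ point. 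Your transport perturbation only yields the first-order condition, not this maximum property. Incidentally, your justification that ``$m_0$ has a density (with $Dm_0\in L^1$) because $(u,m)$ is a classical solution of \eqref{psystem}'' is false: $m_0=m(t_0)$ is the \emph{initial} datum of the Fokker--Planck equation and can be any element of $\mathcal P(\T^d)$ (e.g., a Dirac mass); parabolic regularization gives a smooth density only for $s>t_0$. Fortunately, your transport-perturbation argument does not actually need that density (the Wasserstein stability estimates suffice), so \eqref{DmPhiDu} survives.

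For \eqref{intequ}, however, the density assumption is load-bearing and the gap is genuine. Your chain $\inte\dive_x D_m\Phi\,m_0=-\inte D_m\Phi\cdot Dm_0\,dx=-\inte Du\cdot Dm_0\,dx=\inte\Delta u\,m_0$ requires: (a) $m_0$ to have a $W^{1,1}$ density for the two integrations by parts to make sense; and (b) the $m_0$-a.e.\ equality $D_m\Phi=Du$ to pass to equality of Lebesgue integrals against $Dm_0\,dx$, which needs a further observation (e.g.\ $Dm_0=0$ Lebesgue-a.e.\ on $\{m_0=0\}$ for Sobolev densities). Since the lemma is stated for an arbitrary minimum point $(t_0,m_0)\in[0,T)\times\mathcal P(\T^d)$ --- and this generality is used in Proposition \ref{domination}, where the $(t_n,\bar m_n)$ are extracted as perturbed minimizers and are not known a priori to carry densities --- you cannot conclude. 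The paper bypasses this by relying on what the first part actually establishes (the maximum property at $m_0$-a.e.\ $x$) rather than on any density or integration by parts; you should either switch to the Dirac-mass perturbation to recover that pointwise inequality, or add the density hypothesis on $m_0$ to the statement and then verify that it is satisfied where the lemma is applied.
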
 
 
 \begin{proof} Without loss of generality we assume that $\Phi(t_0,m_0)={\mathcal U}^T(t_0,m_0)$.   
 	Let $m'(t_0)\in \Pw$ and $m'=m'(t)$ be the solution to 
 	$$
 	\partial_t m'-\Delta m'-{\rm div}(m'D_pH(x,Du(t,x))=0.
 	$$
 	We set $\mu(t)=m'(t)-m(t)$ and, for $h\in (0,1]$ and note that the pair $(m+h\mu, D_pH(x,Du(t,x))$ is a solution to $-\partial_t m+\Delta m+\dive (m\alpha)=0$ with initial condition $m^h_0:=(1-h)m_0+hm'(t_0)$. Hence, by the definition of $\Phi$ and ${\mathcal U}^T$ we have 
 	\begin{align*}
 	&\Phi(t_0,m^h_0)\leq {\mathcal U}^T(t_0,m^h_0)\\
 	&\qquad  \leq \int_{t_0}^T (\inte H^*(x,D_pH(x,Du(t,x)))(m+h\mu)(t,dx)+{\mathcal F}((m+h\mu)(t))dt\\
 	&\qquad\leq\Phi(t_0, m_0)+h\Bigl(\int_{t_0}^T \inte(H^*(x,D_pH(x,Du(t,x)))+F(x,m(t)))\mu(t,dx)dt+o_h(1)\Bigr).
 	\end{align*}
 	Next we use the equation for $u$ and then for $\mu$: 
 	\begin{align*}
 	& \int_{t_0}^T \inte(H^*(x,D_pH(x,Du(t,x)))+F(x,m(t)))\mu(t,dx)dt\\ 
 	& = \int_{t_0}^T \inte(H^*(x,D_pH(x,Du(t,x)))-\partial_t u-\Delta u +H(x,Du(t,x)))\mu(t,dx)dt\\
 	& = \inte u(t_0,x)\mu(t_0,dx)= \inte u(t_0,x)(m'(t_0)-m_0)(dx).
 	\end{align*}
 	Plugging this into the estimate of  $\Phi(t_0,m^h_0)$ above, we obtain, dividing by $h$ and letting $h\to0$,  
 	\begin{align*}
 	\inte \frac{\delta\Phi(t_0,m_0,x)}{\delta m}(m'(t_0)-m_0)(dx) \leq \inte u(t_0,x)(m'(t_0)-m_0)(dx). 
 	\end{align*}	
	Recalling the convention \eqref{deriv.conve} on the derivative and the arbitrariness of $m'(t_0)$, we infer, by choosing Dirac masses for $m'(t_0)$, that 
 	$$
 	\frac{\delta\Phi(t_0,m_0,x)}{\delta m}\leq u(t_0,x) - \inte u(t_0,y)m_0(dy) \qquad \forall x\in \T^d, 
 	$$
 	while 
 	$$
 	\inte \frac{\delta\Phi(t_0,m_0,x)}{\delta m}m_0(dx) = \inte \left(u(t_0,x) - \inte u(t_0,y)m_0(dy)\right)m_0(dx) \; =0. 
 	$$
 	Therefore
 	$$
	\frac{\delta\Phi(t_0,m_0,x)}{\delta m}= u(t_0,x) - \inte u(t_0,y)m_0(dy) , \qquad \mbox{\rm $m_0-$a.e. $x\in \T^d$.}
 	$$
 	This shows that the map $x\to \frac{\delta\Phi(t_0,m_0,x)}{\delta m}-u(t_0,x)$ has a maximum on $\T^d$ at $m_0-$a.e. $x\in \T^d$ and thus \eqref{DmPhiDu} holds. 
 	
 	As ${\mathcal U}^T$ satisfies a dynamic programming principle and ${\mathcal U}^T-\Phi$ has a minimum at $(t_0,m_0)$, it is standard that $\Phi$ also satisfies 
 	$$
 	-\partial_t\Phi(t_0,m_0)+\inte H(x, D_m\Phi(t_0,m_0,x))m_0(dx)-\inte {\rm div}_yD_m\Phi(t_0,m_0,x)m_0(dx) -{\mathcal F}(m_0) \geq 0.
 	$$
 	Using \eqref{DmPhiDu} one then infers that \eqref{intequ} holds. 
 \end{proof}
 \subsection{Smooth test functions}
 Here we fix a corrector $\chi$ and construct a smooth function that touches $\chi$ from above. We fix $m_{0}\in\mathcal P(\T^{d})$ and $\tau>0$. We know from \cite[Appendix]{masoero2019} that, if $(\bar m,\bar\alpha)$ verifies
 
 \begin{equation}\label{nchie}
 \chi(m_0)=\int_0^{2\tau}\int_{\T^d}H^*(x,\bar \alpha)d\bar m(s)+\mathcal F(\bar m(s))ds+2\lambda \tau +\chi(\bar m_{2\tau}),
 \end{equation}
 then there exists a couple $(\bar u,\bar m)$ which solves the MFG system
 \begin{equation}\label{nchiesys}
 \begin{cases} 
 -\partial_t u-\Delta u+H(x,Du)=F(x,m) & \mbox{in } \T^d\times[0,2\tau],\\
 -\partial_t m+\Delta m+{\rm div}(mD_pH(x,Du))=0 & \mbox{in }\T^d\times[0,2\tau],\\
 m(0)=m_0,
 \end{cases}
 \end{equation}
 such that $\bar \alpha=D_{p}H(x,D\bar u)$.
 
 For any $m_{1}\in\mathcal P(\T^{d})$ we define $m(t)$ and $\alpha(t)$ as follows. We first consider $\tilde m$ solution of 
 \begin{equation}\label{FP}
 \begin{cases}
 -\partial_t \tilde m +\Delta \tilde m+{\rm div}(\tilde mD_pH(x,D\bar u))=0 & \mbox{in }\,[0,\tau]\times\T^d\\
 \tilde m(0)=m_{1}
 \end{cases}
 \end{equation}
 and then we set
 \begin{equation}
 m(s,x)=\begin{cases} 
 	\tilde m(s,x), & \mbox{if }s\in(0,\tau] \\ 
 	\frac{2\tau-s}{\tau}\tilde m(s,x)+\frac{s-\tau}{\tau}\bar m(s,x), & \mbox{if }s\in[\tau,2\tau].
 \end{cases}
 \end{equation}
 
 Let $\zeta:[0,2\tau]\times\T^d\rightarrow\R$ be the solution to $\Delta\zeta(t)=\bar m(t)-\tilde m(t)$ so that $\int_{\T^d}\zeta(s,x)=0$. Then, the drift $\alpha$ will be $\alpha(t,x)=D_pH(x,D\bar u(t,x))+\frac{D\zeta(t,x)}{\tau\,m(t,x)}$ in $[\tau,2\tau]$ and $\alpha(t,x)=D_pH(x,D\bar u(t,x))$ elsewhere.
 
 We define the function $\Psi(m_{1})$ as
 
 \begin{equation}\label{Psi}
 \Psi(m_{1})=\int_{0}^{2\tau}\int_{\T^{d}}H^{*}(x,\alpha(t))dm(t)+\mathcal F(m(t))dt+2\lambda\tau+\chi(\bar m(2\tau)).
 \end{equation}
 
 \begin{Proposition}\label{Psiconstr}
 The function $\Psi:\mathcal{P}(\T^d)\rightarrow\R$ defined in \eqref{Psi} is twice differentiable with respect to $m$ with $C^2$ continuous derivatives in space and with derivatives bounded independently of $\chi$.
 \end{Proposition}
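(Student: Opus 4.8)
The plan is to exploit that $\Psi$ depends on its argument $m_1$ only through the solution $\tilde m$ of the \emph{linear} Fokker--Planck equation \eqref{FP}, whose drift $b(t,x):=D_pH(x,D\bar u(t,x))$ is fixed once $m_0$ and $\tau$ are chosen and is bounded in $C^1$ in the space variable by the a priori estimates on the MFG system \eqref{nchiesys} — which, by \cite[Appendix]{masoero2019}, hold with constants depending only on the data $H,\mathcal F$, hence \emph{not} on $\chi$. Since \eqref{FP} is linear, $m_1\mapsto\tilde m$ is affine, and so are $m_1\mapsto m(t)$ (it is $\tilde m(t)$ on $(0,\tau]$ and $\tilde m(t)$ plus the fixed density $\bar m(t)$, up to coefficients, on $[\tau,2\tau]$), $m_1\mapsto\zeta(t)$ (solution of $\Delta\zeta(t)=\bar m(t)-\tilde m(t)$ with zero mean) and the momentum field $m_1\mapsto w(t,\cdot):=\alpha(t,\cdot)m(t,\cdot)$, which equals $b\,m$ on $(0,\tau]$ and $b\,m+\tau^{-1}D\zeta$ on $[\tau,2\tau]$. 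Writing the running cost through the flux, $\int_{\T^d}H^*(x,\alpha(t))\,dm(t)=\mathcal L(m(t),w(t))$ with $\mathcal L(\rho,\omega):=\int_{\T^d}H^*\!\big(x,\omega(x)/\rho(x)\big)\rho(x)\,dx$, we get $\Psi(m_1)=\int_0^{2\tau}\big(\mathcal L(m(t),w(t))+\mathcal F(m(t))\big)\,dt+2\lambda\tau+\chi(\bar m(2\tau))$, the last two terms being constants. Hence it suffices to prove that the functional $(\rho,\omega)\mapsto\int_0^{2\tau}\big(\mathcal L(\rho(t),\omega(t))+\mathcal F(\rho(t))\big)\,dt$, pre-composed with the affine map $m_1\mapsto(m(\cdot),w(\cdot))$, is twice differentiable in $m_1$ with derivatives that are $C^2$ in the space variable and bounded uniformly in $\chi$.

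I would handle the two sub-intervals separately. On $[0,\tau]$ one has $w/m=b$, so $\mathcal L(m(t),w(t))=\int_{\T^d}H^*(x,b(t,x))\,\tilde m(t,dx)$ is \emph{linear} in $m_1$; writing $\tilde m(t)=A_tm_1$ with $A_t$ the linear solution operator of \eqref{FP} and $p_t(y,\cdot)=A_t\delta_y$ its transition kernel, one checks that this term equals $\int_{\T^d}v(0,y)\,m_1(dy)$ where $v$ solves the backward Kolmogorov equation $-\partial_tv-\Delta v-b\cdot Dv=H^*(x,b(t,x))$, $v(\tau,\cdot)=0$; since the coefficients and the source are $C^1$ in $x$ with $\chi$-independent bounds, parabolic Schauder estimates give $v(0,\cdot)\in C^{2}$ uniformly, so (after the normalization \eqref{deriv.conve}) its $m$-derivative is $v(0,\cdot)$ minus a constant and its second $m$-derivative vanishes. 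The term $\int_0^\tau\mathcal F(\tilde m(t))\,dt$ is treated by the chain rule: using $\mathcal F\in C^2(\mathcal P(\T^d))$ and the affine dependence of $\tilde m(t)$ on $m_1$, its first and second $m$-derivatives at $y$ (resp. $y,y'$) are, up to normalizing constants, $\int_0^\tau\!\int_{\T^d}F(x,\tilde m(t))\,p_t(y,dx)\,dt$ and $\int_0^\tau\!\iint_{\T^d\times\T^d}\frac{\delta F}{\delta m}(x,\tilde m(t),x')\,p_t(y,dx)\,p_t(y',dx')\,dt$. On $[\tau,2\tau]$, where the diffusion has already regularized, the pair $(m(t),w(t))$ satisfies, uniformly in $m_1\in\mathcal P(\T^d)$ and in $\chi$: $m(t,\cdot)\ge c_0>0$ (Harnack's inequality, exactly as in the proof of Proposition \ref{prop.limcT}, together with positivity of $\bar m$), $\|m(t,\cdot)\|_{C^{2}}\le C$ (parabolic Schauder for \eqref{FP}, elliptic Schauder for $\zeta$) and $|w(t,x)/m(t,x)|\le C_0$; on the open set $\{\rho\ge c_0/2,\ |\omega/\rho|\le 2C_0\}$ the action $\mathcal L$ is a $C^2$ convex functional with derivatives controlled by $c_0,C_0$ and the $C^2$ data of $H$, and composing it with the affine map $m_1\mapsto(m(\cdot),w(\cdot))$ — whose $m$-derivatives, since $t\ge\tau>0$, are smooth in the space variable by parabolic/elliptic smoothing — yields a twice $m$-differentiable map with $C^2$-in-space, uniformly bounded derivatives; the piece $\int_\tau^{2\tau}\mathcal F(m(t))\,dt$ is again dealt with by the chain rule as above.

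The main obstacle is the behaviour as $t\downarrow0$ on the interval $[0,\tau]$: a naive differentiation of the chain-rule expressions puts space derivatives on the Fokker--Planck kernel $p_t(y,x)$, whose $y$-derivatives blow up like $t^{-1}$ and are \emph{not} integrable in $t$, so one cannot differentiate under the integral sign. The remedy is to move the derivatives onto the data instead — differentiate $H^*(\cdot,b)$ and $F$ first and use that $D_yX^y_t$ and its derivatives (solutions of the variational equations of the diffusion with $C^1$ drift) are bounded on $[0,\tau]$ uniformly in $\chi$; equivalently, read off the $m$-derivatives of these terms as traces at $t=0$ of solutions of linear parabolic equations whose data carry only the $C^2$-in-$x$ regularity of $H^*(\cdot,b)$ and of $F=\frac{\delta\mathcal F}{\delta m}$ (bounded by hypothesis). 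Once this point is settled, the remaining estimates are routine — joint convexity and $C^2$ regularity of the kinetic action $\mathcal L$ on the good region, elliptic/parabolic Schauder for $\zeta$ and $\tilde m$, and the $\chi$-independent a priori bounds for \eqref{nchiesys} from \cite{masoero2019} — and they give uniform control of $\frac{\delta\Psi}{\delta m}$, $D_m\Psi$, $D_yD_m\Psi$ and $\frac{\delta^2\Psi}{\delta m^2}$, which is the assertion.
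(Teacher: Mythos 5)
Your proposal follows essentially the same route as the paper's proof: split the cost at $t=\tau$, exploit the affine dependence of $\tilde m$, $\zeta$ and the flux on $m_1$, and invoke parabolic/elliptic regularity, with the paper working through the fundamental solution $\Gamma$ of \eqref{FP} and the Green kernel of $\Delta$ where you instead use the backward Kolmogorov equation and the kinetic action $\mathcal L(\rho,\omega)$. You are in fact more explicit than the paper about the potential $t\downarrow 0$ singularity of $D^2_y\Gamma$, which the paper only covers by citing \cite{ladyzhenskaia1988linear}; your remedy (transfer the $y$-derivatives onto the $C^2$-in-$x$ data via the backward representation) is the right one. One harmless sign slip: with $b:=D_pH(x,D\bar u)$, the vector field appearing inside the divergence in \eqref{FP}, the adjoint backward operator is $-\partial_t-\Delta+b\cdot D$, not $-\partial_t-\Delta-b\cdot D$; this does not affect the argument.
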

 \begin{proof}
 	We first introduce $\Gamma:\R^+\times\T^d\times\R^+\times\T^d\rightarrow\R$, the fundamental solution of \eqref{FP}, i.e. $\Gamma(\cdot,\cdot\,;s,x)$ is the solution of \eqref{FP} starting at time $s$ with initial condition $\Gamma(s,y;s,x)=\delta_x(y)$.
 	Then, by superposition, the solution $\bar m(t)$ of \eqref{FP} is given by $\bar m(t,x)= \int_{\T^d} \Gamma(t,x; 0, y)m_1(dy)$ (for $t>0$).
 	
 	We consider separately the following two integrals:
 	\begin{equation}\label{I1}
 	I_1(m_1)=\int_{0}^{\tau}\int_{\T^{d}}H^{*}(x,\bar\alpha(t))d\bar m(t)+\mathcal F(\bar m(t))dt
 	\end{equation}
 	and
 	\begin{equation}
 	I_2(m_1)=\int_{\tau}^{2\tau}\int_{\T^{d}}H^{*}(x,\alpha(t))dm(t)+\mathcal F(m(t))dt.
 	\end{equation}
 	Note that $\Psi=I_1+I_2+2\lambda\tau+\chi(\bar m(2\tau))$.
 	
 	If we plug $\Gamma$ into \eqref{I1}, then 
 	\begin{equation}
 	I_1(m_1)=\int_{0}^{\tau}\int_{\T^{d}}\int_{\T^{d}}H^{*}(x,\bar\alpha(t))\Gamma(t,x,0,y) m_1(dy)dxdt+\mathcal F\left(\int_{\T^{d}}\Gamma(t,\cdot,0,y) m_1(dy)\right)dt
 	\end{equation}
 	We can now derive $I_1$ with respect to $m_1$ andwe get
 	\begin{equation}
 	\frac{\delta I_1}{\delta m}(m_1,y)=\int_{0}^{\tau}\int_{\T^{d}}H^{*}(x,\bar\alpha(t))\Gamma(t,x,0,y)dxdt+\int_0^\tau\int_{\T^{d}}F(x,\bar m(t))\Gamma(t,x,0,y)dxdt.
 	\end{equation}
 	As the functions $\bar\alpha$ and $\bar m$ are smooth, standar results in parabolic equation ensures that $\delta I_1/\delta m(m_1,\cdot)$ is smooth (see for instance Chapter $4$§$14$ in \cite{ladyzhenskaia1988linear}).
 	
 	We now focus on $I_2$. We fix $G:\T^d\rightarrow\R$ the kernel associated to the integral representation of the solution of the Poisson equation (see for instance \cite[Theorem 4.13]{aubin2013some}). Explicitly, if $\Delta\zeta=f$, then 
 	\begin{equation}\label{kp}
 	\zeta(x)=\int_{\T^d}G(x-y)f(y)dy
 	\end{equation}

 	We first analyse the integral $\int_{\tau}^{2\tau}\int_{\T^{d}}H^{*}(x,\alpha(t))dm(t)$, which explicitly becomes
 	\begin{equation*}
 	\int_{\tau}^{2\tau}\int_{\T^{d}}H^{*}\left(x,D_pH(x,D\bar u)+\frac{D\zeta}{(2\tau-s)\tilde m+(s-\tau)\bar m}\right)\left(\frac{2\tau-s}{\tau}\tilde m(s,x)+\frac{s-\tau}{\tau}\bar m(s,x)\right).
 	\end{equation*}
 	
 	If we derive the above expresion we get
 	\begin{equation}\label{fder}
 	\begin{aligned}
 	\frac{1}{\tau}\int_{\tau}^{2\tau}\int_{\T^{d}}\int_{\T^{d}}-D_pH^*(\alpha)\frac{G(x-z)D_z\Gamma_0(t,z;y)m(t,x)+(2\tau-s)\Gamma_0(t,x;y)D\zeta(t,x)}{m(t,x)}dxdz
 	\\
 	+\frac{1}{\tau}\int_{\tau}^{2\tau}\int_{\T^{d}}\int_{\T^{d}}H^*(\alpha)(2\tau-s)\Gamma_0(t,x;y)dxdz.
 	\end{aligned}
 	\end{equation}
 	Therefore,$\frac{\delta I_2}{\delta m}(m_1,y)$ is equal to
 	\begin{align*}
 	&\frac{1}{\tau}\int_{\tau}^{2\tau}\int_{\T^{d}}\int_{\T^{d}}-D_pH^*(\alpha)\frac{G(x-z)D_z\Gamma_0(t,z;y)m(t,x)+(2\tau-s)\Gamma_0(t,x;y)D\zeta(t,x)}{m(t,x)}dxdzdt+
 	\\
 	&\frac{1}{\tau}\int_{\tau}^{2\tau}\int_{\T^{d}}\int_{\T^{d}}H^*(x,\alpha)(2\tau-s)\Gamma_0(t,x;y)dxdz+\frac{1}{\tau}\int_{\tau}^{2\tau}\int_{\T^{d}}(2\tau-s)F(x,m(s,x))\Gamma_0(t,x;y)dxdt
 	\end{align*}
 	and 
 	\begin{equation}
 	\frac{\delta \Psi}{\delta m}(m_1,y)=\frac{\delta I_1}{\delta m}(m_1,y)+\frac{\delta I_2}{\delta m}(m_1,y)
 	\end{equation}
 	
 	Note that, as in the above expression we are looking at a time interval bounded away from zero, the parabolic regularity ensures that all the functions therein are smooth with respect to the state variable. This implies that also $D_m\Psi$ is well defined.
 	
 	We omit the proof for second order derivatives. It does not present any further difficulties. Indeed, the parabolic regularity, enjoyed by the solutions of the MFG system at any time $t>0$, ensures that we can deploy the same kind of computations that we used in \eqref{fder} and so that both $D^2_{mm}\Psi$ and $D^2_{my}\Psi$ are well defined and bounded.
 \end{proof}
 \begin{Lemma}\label{chitestf}
For any $m_0\in\mathcal P(\T^d)$, there exists a function $\bar\Psi\in C^2(\mathcal P(\T^d))$ such that $\bar\Psi(m)>\chi(m)$ for any $m\neq m_0$ and $\bar\Psi(m_0)=\chi(m_0)$. 

Moreover, we can choose $\bar\Psi$ such that $D^2_{mm}\bar\Psi$ and $D^2_{ym}\bar\Psi$ are bounded independently of $\chi$ and with $D_m\bar\Psi(m_0,x)=D\bar u(0,x)$ where $\bar u$ is defined in \eqref{nchie} and \eqref{nchiesys}.
 \end{Lemma}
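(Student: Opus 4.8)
\emph{Proof proposal.} The plan is to take $\bar\Psi$ to be the function $\Psi$ of \eqref{Psi} — built from a well-chosen calibrated-type curve through $m_0$ — corrected by a small smooth bump to make the inequality strict. \emph{Step 1 ($\Psi$ touches $\chi$ from above at $m_0$).} Fix $\tau>0$, choose $(\bar m,\bar\alpha)$ satisfying \eqref{nchie} together with the associated solution $(\bar u,\bar m)$ of \eqref{nchiesys}, and let $\Psi$ be given by \eqref{Psi}; by Proposition \ref{Psiconstr}, $\Psi\in C^2(\mathcal P(\T^d))$ with $D^2_{mm}\Psi$ and $D^2_{ym}\Psi$ bounded independently of $\chi$. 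First, $\Psi(m_0)=\chi(m_0)$: when $m_1=m_0$ the solution $\tilde m$ of \eqref{FP} coincides with $\bar m$ on $[0,\tau]$ (same linear equation, same initial datum), hence $\zeta\equiv 0$ and therefore $m\equiv\bar m$, $\alpha\equiv\bar\alpha$ on $[0,2\tau]$, so \eqref{Psi} reduces to the right-hand side of \eqref{nchie}. Second, $\Psi\geq\chi$ on $\mathcal P(\T^d)$: for any $m_1$ I would check by a direct computation that the pair $(m,\alpha)$ entering \eqref{Psi} solves $-\partial_t m+\Delta m+\dive(m\alpha)=0$ with $m(0)=m_1$, the crucial point being that on $[\tau,2\tau]$ one has $m\alpha=mD_pH(x,D\bar u)+\tau^{-1}D\zeta$, so that $\dive(m\alpha)$ produces precisely the term $\tau^{-1}\Delta\zeta=\tau^{-1}(\bar m-\tilde m)$ needed to cancel the time derivative of the interpolation $m$; then $(m,\alpha)$ is admissible in the dynamic programming principle \eqref{dynchi} for $\chi$ with horizon $2\tau$, and since $m(2\tau)=\bar m(2\tau)$, \eqref{Psi} yields $\chi(m_1)\leq\Psi(m_1)$.

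\emph{Step 2 (the derivative at $m_0$).} Next I would compute the first variation of $\Psi$ at $m_0$ along a signed measure $\mu_0$ of zero mass. Since $\tilde m$ depends linearly on its initial datum and equals $\bar m$ at $m_1=m_0$, one differentiates \eqref{Psi} explicitly; using the Fenchel identity $H^*(x,\bar\alpha)=\bar\alpha\cdot D\bar u-H(x,D\bar u)$, the Hamilton--Jacobi equation for $\bar u$ in \eqref{nchiesys}, and the equation satisfied by the linearized flow, all the interior contributions telescope (exactly as in the proof of Lemma \ref{lem.supsol}), leaving $\frac{d}{dh}\Psi(m_0+h\mu_0)\big|_{h=0}=\int_{\T^d}\bar u(0,x)\,\mu_0(dx)$. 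With the normalization \eqref{deriv.conve} this gives $\frac{\delta\Psi}{\delta m}(m_0,x)=\bar u(0,x)-\int_{\T^d}\bar u(0,y)m_0(dy)$, hence $D_m\Psi(m_0,x)=D\bar u(0,x)$.

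\emph{Step 3 (making the inequality strict).} Since Step 1 only gives $\Psi\geq\chi$, I would add a smooth bump vanishing to first order at $m_0$. Fix a countable family $(\psi_k)_k\subset C^2(\T^d)$ with $\|\psi_k\|_{C^2}\leq 1$ separating the points of $\mathcal P(\T^d)$, and set $\phi(m):=\sum_k 4^{-k}\big(\int_{\T^d}\psi_k\,d(m-m_0)\big)^2$. Then $\phi\in C^2(\mathcal P(\T^d))$ with $D^2_{mm}\phi$ and $D^2_{ym}\phi$ bounded by a universal constant, $\phi(m_0)=0$, $\phi(m)>0$ for $m\neq m_0$, and $D_m\phi(m_0,\cdot)\equiv 0$. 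It then suffices to take $\bar\Psi:=\Psi+\varepsilon\phi$ for any fixed $\varepsilon>0$: this $\bar\Psi$ lies in $C^2(\mathcal P(\T^d))$ with second derivatives bounded independently of $\chi$, it satisfies $\bar\Psi(m_0)=\chi(m_0)$ and $\bar\Psi(m)\geq\chi(m)+\varepsilon\phi(m)>\chi(m)$ for $m\neq m_0$, and $D_m\bar\Psi(m_0,\cdot)=D_m\Psi(m_0,\cdot)=D\bar u(0,\cdot)$.

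\emph{Main obstacle.} The delicate point is Step 2: verifying that the first variation of the two-piece object $\Psi$ — with the Poisson-type drift correction $\tau^{-1}D\zeta/m$ on $[\tau,2\tau]$ — is exactly the co-state $D\bar u(0,\cdot)$ requires carefully tracking the linearized Fokker--Planck flow through several integrations by parts, although it parallels the computation in Lemma \ref{lem.supsol}. Checking the continuity equation for $(m,\alpha)$ in Step 1 is short but essential, and the bump construction in Step 3 is routine.
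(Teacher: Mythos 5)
Your proposal is correct and follows the same overall strategy as the paper: take $\bar\Psi=\Psi+(\text{smooth non-negative bump vanishing to first order at }m_0)$, with the bump built from a weighted $\ell^2$-sum of squared test integrals $\bigl(\int\psi_k\,d(m-m_0)\bigr)^2$. Your Steps~1 and~2 in fact make explicit what the paper leaves implicit in the proof of Lemma~\ref{chitestf} (the paper takes $\Psi(m_0)=\chi(m_0)$, $\Psi\geq\chi$ and $D_m\Psi(m_0,\cdot)=D\bar u(0,\cdot)$ as read off from the construction preceding Proposition~\ref{Psiconstr}), and your verification that $(m,\alpha)$ solves the continuity equation on $[\tau,2\tau]$, with $\dive(m\alpha)$ producing the cancelling term $\tau^{-1}(\bar m-\tilde m)$, is accurate. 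The only cosmetic difference is the normalization of the bump: the paper uses $\phi_n$ dense in $\mathrm{Lip}_1(\T^d)$ with weights $\bigl((n+1)^2(\|\phi_n\|_\infty+\|D^2\phi_n\|_\infty+1)\bigr)^{-1}$ so the sum manifestly reproduces (a squared version of) the Wasserstein distance, whereas you use a $C^2$-normalized separating family with geometric weights $4^{-k}$; both give a $C^2$ bump with derivatives bounded independently of $\chi$.
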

 \begin{proof}
Let $\{\phi_n\}_n$ be a countable collection of $C^\infty(\T^d)$ maps such that $\{\phi_n\}_n$ is dense in the set of ${\rm Lip}_1(\T^d)$, which is the set $1$-Lipschitz function on $\T^d$. Then, 
$$
\mathbf{d}(m,m_0)=\sup_{f\in {\rm Lip}_1(\T^d)}\int_{ \T^d}f(x)(m-m_0)(dx)=\sup_{n\in\N}\int_{ \T^d}\phi_n(x)(m-m_0)(dx).
$$
We define $Q:\mathcal P(\T^d)\rightarrow\R$ as follows
\begin{equation}\label{Qdef}
Q(m)=\sum_{n\in\N}\frac{\left(\int_{ \T^d}\phi_n(x)(m-m_0)(dx)\right)^2}{(n+1)^2(\Vert \phi_n\Vert_\infty+\Vert D^2\phi_n\Vert_\infty+1)}.
\end{equation}
The denominator in the above fraction ensures that $Q$ is well defined for any $m\in\mathcal P(\T^d)$. Note that $Q(m)=0$ if and only if, for any $n\in\N$, $\int_{ \T^d}\phi_n(x)(m-m_0)(dx)=0$. In this case, $\mathbf{d}(m,m_0)=0$ and so $m=m_0$. One easily checks that $Q$ is smooth and that its derivatives
\begin{equation}\label{DmQ}
D_m Q(m,y)=2 \sum_{n\in\N}\frac{\left(\int_{ \T^d}\phi_n(x)(m-m_0)(dx)\right) D\phi_n(y)}{(n+1)^2(\Vert \phi_n\Vert_\infty+\Vert D^2\phi_n\Vert_\infty+1)},
\end{equation}
$$
D^2_{my} Q(m,y)=2\sum_{n\in\N}\frac{\left(\int_{ \T^d}\phi_n(x)(m-m_0)(dx)\right) D^2\phi_n(y)}{(n+1)^2(\Vert \phi_n\Vert_\infty+\Vert D^2\phi_n\Vert_\infty+1)},
$$
and
$$
D^2_{mm} Q(m,y,z)=\sum_{n\in\N}\frac{D\phi_n(y)\otimes D\phi_n(z)}{(n+1)^2(\Vert \phi_n\Vert_\infty+\Vert D^2\phi_n\Vert_\infty+1)}
$$
are bounded. Note also that $D_m Q(m_0,y)=0$ for any $y\in\mathcal P(\T^d)$.  We can now define

$$
\bar\Psi(m)=\Psi(m)+Q(m),
$$
where $\Psi$ is the function defined in \eqref{Psi}. By construction, $\bar\Psi$ is such that $\bar\Psi(m)>\chi(m)$ for any $m\neq m_0$ and $\bar\Psi(m_0)=\chi(m_0)$. Moreover,

$$
D_m\bar\Psi(m_0,y)=D_m\Psi(m_0,y)+D_m Q(m_0,y)=D_m\Psi(m_0,y)=D\bar u(0,y).
$$
The boundedness of the derivatives comes from Proposition \ref{Psiconstr} and the properties of $Q$ that we discussed above.
 \end{proof}

\bibliography{../KAMproject/KAMbib}
\bibliographystyle{amsplain}

\end{document}